

\documentclass[12pt]{amsart}

\usepackage{amsmath}
\usepackage{amssymb}
\usepackage{array}
\usepackage{enumitem}
\usepackage{verbatim} 
\usepackage{hyperref}

\addtolength{\textheight}{2cm}
\addtolength{\hoffset}{-1cm}
\addtolength{\textwidth}{2.25cm}
\addtolength{\voffset}{-1.5cm}

\input cyracc.def
\newfam\cyrfam

\theoremstyle{plain}
\newtheorem{theorem}{Theorem}[section]
\newtheorem{lemma}[theorem]{Lemma}

\newtheorem{corollary}[theorem]{Corollary}

\newtheorem{definition}[theorem]{Definition}
\theoremstyle{remark}
\newtheorem*{remark}{Remark}

\newtheorem*{notation}{Notation}

\numberwithin{equation}{section}





\newcommand{\bP}{\mathbb{P}}
\newcommand{\bR}{\mathbb{R}}

\newcommand{\Z}{\mathbb{Z}}

\newcommand{\cX}{\mathcal{X}}
\newcommand{\cY}{\mathcal{Y}}
\newcommand{\cP}{\mathcal{P}}


\newcommand{\ga}{\mathfrak{a}}


\newcommand{\bV}{\mathbf{V}}
\newcommand{\bT}{\mathbf{T}}


\newcommand{\Hom}{\mathrm{Hom}}
\newcommand{\Aut}{\mathrm{Aut}}

\newcommand{\Bij}{\mathrm{Bij}}
\newcommand{\Map}{\mathrm{Map}}
\newcommand{\Gras}{\mathit{Gras}}
\newcommand{\Rel}{\mathrm{Rel}}

\newcommand{\id}{\mathrm{id}}

\newcommand{\Gamm}{\mathbf{\Gamma}} 
\newcommand{\Beta}{\mathrm{B}}
\newcommand{\Kappa}{\mathrm{K}}

\newcommand{\setof}[2]{\{ #1 \mid #2\}}
\newcommand{\Bigsetof}[2]{\begin{Bmatrix} #1 \,\Big|\, #2 \end{Bmatrix}}

\newcommand{\inv}{^{-1}}

\newcommand{\msk}{\medskip}
\newcommand{\ssk}{\smallskip}
\newcommand{\nin}{\noindent}

\begin{document}

\title{The projective geometry of a group}

\author{Wolfgang Bertram}

\address{Institut \'{E}lie Cartan Nancy \\
Lorraine Universit\'{e}, CNRS, INRIA \\
Boulevard des Aiguillettes, B.P. 239 \\
F-54506 Vand\oe{}uvre-l\`{e}s-Nancy, France}

\email{\url{bertram@iecn.u-nancy.fr}}

\subjclass[2010]{
08A02, 
20N10,  
16W10,  
16Y30 ,  
20A05, 
	51N30
	}

\keywords{
torsor (heap, groud, principal homogeneous space),
semitorsor, relations, 
projective space, Grassmannian, near-ring, generalized lattice} 

\begin{abstract}
We show that the pair $(\cP(\Omega),\Gras(\Omega))$ 
given by the power set $\cP = \cP(\Omega)$ 
and by the ``Grassmannian'' $\Gras(\Omega)$ of all subgroups of an arbitrary group $\Omega$
behaves very much like a projective space $\bP(W)$ and its dual projective space
$\bP(W^*)$ of a vector space $W$.
More precisely, we generalize several results from the case of  the abelian group
$\Omega =(W,+)$ (cf.\ \cite{BeKi10}) to the case of a general group $\Omega$. Most notably, 
pairs of subgroups $(a,b)$ of $\Omega$ parametrize {\em torsor} and
{\em semitorsor} structures on $\cP$. The r\^ole of associative algebras and -pairs
from \cite{BeKi10} is now taken by analogs of {\em near-rings}.
\end{abstract}

\maketitle

\section{Introduction and statement of main results}

\subsection{Projective geometry of an abelian group}
Before explaining our general results, let us briefly recall the classical case of projective
geometry of a vector space $W$ : let $\cX = \bP( W)$ be the projective space of $W$ and
$\cX' = \bP (W^*)$ be its
 dual projective space (space of hyperplanes). 
The ``duality'' between $\cX$ and $\cX'$ is encoded on two levels 
\begin{enumerate}
\item[(1)]  
on the level of {\em incidence structures}: an element $x=[v] \in \bP W$ is {\em 
incident} with an element $a=[\alpha] \in \bP W^*$ if ``$x$ lies on $a$'', i.e., if
$\alpha(v)=0$ ; otherwise we say
that they are {\em remote} or {\em transversal}, and we  then write $x\top a$ ;
\item[(2)]  
on the level of  {\em (linear) algebra}:
the set $a^\top$ of elements $x \in \cX$ that are transversal to $a$ is,
in a completely natural way, an {\em affine space}.
\end{enumerate}
In \cite{BeKi10}, the second point has been generalized: for any
pair $(a,b) \in \cX' \times \cX'$, the intersection 
$U_{ab}:=a^\top \cap b^\top$ of two ``affine cells'' carries a natural torsor structure.
Recall that ``torsors are for groups what affine spaces are for vector 
spaces'':\footnote{The concept used here goes back to J.\ Certaine \cite{Cer43}; there
are several equivalent versions,  known
under various other names such as {\em groud}, {\em heap}, 
or {\em principal homogeneous space}.}

\begin{definition}\label{TorsorDefinition}
A {\em semitorsor} is a set $G$ together with a map
$G^3 \to G$, $(x,y,z) \mapsto (xyz)$ such that the following identity, 
called the \emph{para-associative law}, holds:
\[
(xy(zuv))= (x(uzy)v)=((xyz)uv)\, . \tag{T1}
\]
A {\em torsor} is a semitorsor in which, moreover, the following
\emph{idempotent law} holds:
\[
(xxy)=y=(yxx) \, . \tag{T2}
\]
\end{definition}

\nin Fixing the middle element $y$ in a torsor $G$, we get a group law
$xz:=(xyz)$ with neutral element $y$, and every group is obtained in this way.
Similarly, semitorsors give rise to semigroups, but the converse is more
complicated. 
The torsors $U_a:=U_{aa}$ are the underlying torsors of the affine space $a^\top$, hence
are abelian, whereas for $a \not= b$, the torsors $U_{ab}$ are in general 
 non-commutative.
Thus, in a sense, the torsors $U_{ab}$ are {\em deformations} of the abelian torsor $U_a$.
More generally, in \cite{BeKi10} all this is done
 for a pair $(\cX,\cX')$ of {\em dual Grassmannians}, not only for
projective spaces.

\subsection{Projective geometry of a general group}
In the present work, the {\em commutative} group $(W,+)$ will be replaced by an
arbitrary group $\Omega$ (however, in order to keep formulas easily readable, we will
still use an additive notation for the group law of $\Omega$). 
%
It  turns out, then,  that the r\^ole of $\cX$ is taken by the power set $\cP(\Omega)$ of
{\em all} subsets of $\Omega$, and the one of $\cX'$ by the
``Grassmannian'' of all subgroups of $\Omega$.
We call $\Omega$  the ``background group'', or just {\em the background}. Its
subsets  will be denoted by small latin letters $a,b,x,y,\ldots$ and, if possible, elements of such sets
by corresponding greek letters: $\alpha \in a$, $\xi \in x$, and so on.
As said above, ``projective geometry on $\Omega$'' in our sense has two 
ingredients which we are going to explain now
\begin{enumerate}
\item[(1)]  a (fairly weak) {\em incidence} (or rather: {\em non-incidence}) {\em structure},
and 
\item[(2)]   a much more relevant {\em algebraic} structure consisting of a collection of
torsors and  semi-torsors.
\end{enumerate}

\begin{definition}\label{TransversalDefinition}
The {\em projective geometry} of a group $(\Omega,+)$ is its power set
$\cP:=\cP(\Omega)$. 
We say that a pair $(x,y) \in \cP^2$ is {\em left transversal} if
 every $\omega \in \Omega$ admits a unique decomposition
$\omega=\xi + \eta$ with $\xi \in x$ and $\eta \in y$.
We then write $x \top y$.
 We say that the pair $(x,y)$ is {\em right transversal} if $y \top x$, and we
 let
$$
x^\top := \{ y \in \cP \mid x \top y \}, \qquad { }^\top x:= \{ y \in \cP \mid y \top x \} \, .
$$
\end{definition}

\nin The ``(non-) incidence structure'' thus defined is  not very interesting in its own
right;  however, in combination with the algebraic torsor structures it becomes quite powerful.
 There are two, in a certain sense ``pure'', special
 cases to consider;   the general case is a sort of mixture of these two:
 let $a,b$ be two subgroups of $\Omega$,

\ssk
(A)
the {\em transversal case}
$a \top b$:  then $a^\top \cap { }^\top b$ is a torsor of ``bijection type'',

(B) the {\em singular case}  $a=b$; it corresponds to  ``pointwise torsors''
${ }^\top a$ and $b^\top$.

\ssk
\nin 
Protoypes for (A) are torsors of the type $G=\Bij(X,Y)$ (set of bijections
$f:X \to Y$ between two sets $X$ and $Y$), with torsor structure $(fgh):=f \circ g\inv \circ h$,
and prototypes for (B) are torsors of the type $G=\Map(X,A)$ 
(set of maps from $X$ to $A$), where $A$ is a torsor and $X$ a set, 
together  with their natural  ``pointwise product''. 

\ssk
Case (A) arises, if, when $a \top b$,  we identify $\Omega$ with the cartesian product
$a \times b$; then elements $z \in { }^\top b$ can be identified with
``left graphs'' $\{ (\alpha,Z \alpha) \mid \, \alpha \in a \}$ of maps $Z:a \to b$.
The map $Z$ is bijective iff this graph belongs to $a^\top$.
Therefore $G:=  { }^\top b \cap a^\top$ carries a natural torsor structure of ``bijection type'': 
as a torsor,
it is isomorphic to $\Bij(a,b)$. It may be empty; if it is non-empty, then it is isomorphic
to (the underlying torsor of) the group $\Bij(a,a)$. Note that the structure of
this group does not involve the one of $\Omega$, indeed, the group structure
of $\Omega$ enters here only implicitly, via the identification of $\Omega$ with
$a \times b$. 

\ssk
On the other hand, in the ``singular case'' (B),  the set
${ }^\top a$ is naturally identified with the set of sections of the canonical
projection $\Omega \to \Omega/a$, and this set is a torsor of pointwise type,
modelled on the ``pointwise group'' of all maps $f:\Omega/a \to a$. 
It is abelian iff so is $a$. Indeed, such torsors correspond precisely to the
``affine cells'' from usual projective geometry. 

\subsection{The ``balanced'' torsors $U_{ab}$ and the ``unbalanced'' torsors $U_a$.}
Following the ideas developed in \cite{BeKi10}, we consider
 the general torsors $U_{ab}$  as a sort of ``deformation of the pure case (B)
in direction of (A)''. However, for treating the case of a non-commutative group $\Omega$
we need several
important modifications of the setting from \cite{BeKi10}:
 first of all, the projective geometry $\cP$ and its ``dual''
$\Gras(\Omega)$ are no longer the same objects (the subset $\Gras(\Omega) \subset \cP$ is no longer
stable under the various torsor laws); next, for $a=b$, we have to distinguish
between several versions of torsor laws, those that one can deform easily, called
{\em balanced}, and those which seem to be more rigid and which we
call {\em unbalanced}. The most conceptual way to present this is via
 the following algebraic {\em structure maps}:

\begin{definition}
The {\em structure maps} of  $(\Omega,+)$ are the
maps $\Gamma:\cP^5 \to \cP$ and
$\Sigma:\cP^4 \to \cP$ 
defined, for
 $x,a,y,b,z \in \cP$, by
\begin{align*}
\Gamma(x,a,y,b,z) &:=
\Bigsetof{\omega \in \Omega}
{\begin{array}{c}
\exists \xi \in x,
\exists \alpha \in a,
\exists \eta \in y,
\exists \beta \in b,
\exists \zeta \in z : \\
\xi = \omega  + \beta,\quad
\eta =  \alpha +  \omega + \beta, \quad
\zeta = \alpha + \omega
\end{array}} ,
\cr
\Sigma(a,x,y,z) &:=
\Bigsetof{\omega \in \Omega}
{\begin{array}{c}
 \exists \xi \in x,\exists \eta \in y,\exists \zeta \in z,\exists \beta,\beta' \in b: \\
\xi = \omega + \beta, \quad
\eta = \omega + \beta' + \beta, \quad
\zeta = \omega +\beta' 
\end{array}} \, .
\end{align*}
For a fixed pair $(a,b) \in \cP^2$, resp.\ a fixed element $b \in \cP$, we let
$$
(xyz)_{ab}:= \Gamma(x,a,y,b,z), \qquad
(xyz)_b:=\Sigma(b,x,y,z) \ .
$$
\end{definition}

\nin The following is a main result of the present work (Theorems  \ref{PointwisetorsorTheorem}
and \ref{BalancedTorsorTheorem}):

\begin{theorem}\label{TheoremA}
Assume $(a,b)$ is a pair of subgroups of $\Omega$. 
Then the following holds:
\begin{enumerate}
\item
The map $(x,y,z) \mapsto (xyz)_{b}$ defines a torsor structure on 
the set $ { }^\top b$.
We denote this torsor by $U_b$. It 
 is isomorphic to the torsor of sections of the projection $\Omega \to  \Omega / b$.
\item
The map $(x,y,z) \mapsto (xyz)_{ab}$ defines a torsor structure on 
the set $ a^\top \cap { }^\top b$. 
We denote this torsor by $U_{ab}$. 
If, moreover, $a \top b$, then it is isomorphic to the group of bijections of $a$.
\end{enumerate}
\end{theorem}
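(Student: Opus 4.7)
The strategy is to reduce both torsor structures to the elementary \emph{coset torsor}: for any subgroup $c \leq \Omega$, each right coset $\eta + c$ (and each left coset $c + \omega$) is stable under the ternary operation $(p,q,r) \mapsto p - q + r$, since
\[
(\eta+\gamma_1) - (\eta+\gamma_2) + (\eta+\gamma_3) \;=\; \eta + (\gamma_1 - \gamma_2 + \gamma_3)
\]
(the pair $-\eta+\eta$ cancels in the middle), and this ternary operation makes the group $c$ itself into a torsor. So each coset carries an intrinsic torsor structure satisfying (T1) and (T2) by direct calculation.

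For part (1), note first that ${}^\top b$ is exactly the set of set-theoretic sections of the projection $\pi_b\colon \Omega \to \Omega/b$. Unfolding $\Sigma(b,x,y,z)$: the three equations $\xi = \omega + \beta$, $\zeta = \omega + \beta'$, $\eta = \omega + \beta' + \beta$ force $\beta = -\omega + \xi$ and $\beta' = -\omega + \zeta$, whence the last constraint becomes $\eta = \zeta - \omega + \xi$, equivalently $\omega = \xi - \eta + \zeta$. Since $x,y,z$ are sections, $\xi, \eta, \zeta$ are forced to be the unique respective representatives of the common right coset of $\omega$. Thus $(xyz)_b$ is the section whose value on each coset $C$ is $s_x(C) - s_y(C) + s_z(C)$, and the torsor axioms for $U_b$ follow coset by coset from those of the coset torsor; the isomorphism with the torsor of sections is then tautological.

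For part (2), the same bookkeeping applied to $\Gamma(x,a,y,b,z)$ shows that $\omega \in (xyz)_{ab}$ iff, denoting by $\xi \in x$ the unique representative of the right $b$-coset of $\omega$ and by $\zeta \in z$ the unique representative of the left $a$-coset of $\omega$, the element $\zeta - \omega + \xi$ lies in $y$. In the transversal case $a \top b$, the decomposition $\Omega \cong a \times b$ via $(\alpha,\beta) \mapsto \alpha + \beta$ identifies a double transversal with the graph of a bijection $a \to b$; a direct coordinate computation then shows $(xyz)_{ab}$ corresponds to the composite of the bijections associated with $x$, $y^{-1}$, $z$, i.e.\ to the standard torsor structure on $\Bij(a,b)$, so that fixing any basepoint yields a group isomorphic to $\Bij(a,a)$.

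The principal obstacle is the closure step in part (2) for general subgroups $a, b$ (no transversality assumed): one must show that if $x, y, z \in a^\top \cap {}^\top b$ then $(xyz)_{ab}$ is again a double transversal. The decisive observation is that a double transversal $y$ induces a bijection between the quotient sets $a\backslash\Omega$ and $\Omega/b$ (sending the left coset of $\eta \in y$ to the right coset of the same $\eta$), and the prescription $\omega = \xi - \eta + \zeta$ is compatible with this pairing; one then verifies that, for each matched pair of cosets, exactly one $\omega$ is produced by the formula. Once closure is established, the para-associative and idempotent laws for $U_{ab}$ reduce to those of the coset torsor applied fibrewise along this pairing, in the same spirit as part (1).
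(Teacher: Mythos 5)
Your part (1) is correct and is essentially the paper's own argument: unfolding $\Sigma(b,x,y,z)$ to $\omega=\xi-\eta+\zeta$ with $\xi,\eta,\zeta$ in a common right coset of $b$ is exactly how the paper identifies $U_b$ with the pointwise torsor of sections, and your treatment of the transversal case of part (2) via composition of relations matches Theorems \ref{CompositionTheorem} and \ref{BijectionTorsorTheorem}. The difficulty is that your two key steps for part (2) in the \emph{general} (non-transversal) case do not work as stated. For the closure step: from $\xi=\omega+\beta$ and $\zeta=\alpha+\omega$ one reads off $\omega+b=\xi+b$ and $a+\omega=a+\zeta$, i.e.\ the right coset of $\omega$ is that of $\xi\in x$ and its left coset is that of $\zeta\in z$; these are \emph{not} matched under your pairing $a+\eta\leftrightarrow\eta+b$ ($\eta\in y$). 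A minimal counterexample: $\Omega=\Z^2$, $a=\Z\times\{0\}$, $b=\{0\}\times\Z$, $y=z=\{(n,n)\}$, $x=\{(n,n+1)\}$; then $\Gamma(x,a,y,b,y)=x$ by the idempotent law, and $\omega=(0,1)$ lies in the left coset $a+(1,1)$ but in the right coset $(0,1)+b$, whereas your pairing matches $a+(1,1)$ with $(1,1)+b$. The correct counting argument is nearby but needs more: one must use that $\eta\mapsto P^a_x(\eta)$ is a bijection $y\to x$ (this requires $a\top y$, i.e.\ transversality of the \emph{middle} entry, not just $a\top x$) and that $\eta\mapsto\check P^b_z(\eta)$ is a bijection $y\to z$ (requiring $y\top b$); composing with $x\cong\Omega/b$ and $z\cong a\backslash\Omega$ then shows each right and each left coset meets $\Gamma(x,a,y,b,z)$ exactly once. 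The paper's Theorem \ref{PairTheorem} does the equivalent bookkeeping by writing $x$ and $z$ as graphs and checking that the operators $L_{xayb}$ and $R_{aybz}$ send graphs to graphs.

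The second gap is para-associativity. You assert that the torsor axioms for $U_{ab}$ ``reduce to those of the coset torsor applied fibrewise,'' but the balanced law is not a fibrewise structure: already in the transversal case it is composition of bijections (non-abelian, mixing fibres), and in general the affine picture gives the twisted product $X+Z\circ\Beta^{a,x,b}_y$ rather than a pointwise one. So the para-associative law does not follow coset by coset and must be proved by a genuine global computation. The paper does this once and for all on the whole power set (Theorem \ref{SemitorsorTheorem}), by a change of variables such as $\alpha''=\alpha'+\alpha$, $\beta'''=-\beta+\beta'$ that uses precisely the hypothesis that $a$ and $b$ are subgroups, and then obtains $U_{ab}$ as a sub\emph{semi}torsor on which the idempotent laws hold (Lemma \ref{IdempotentLemma}). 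Some argument of this kind is missing from your proposal and cannot be replaced by the fibrewise reduction that worked for part (1).
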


\nin Just as  the torsor structures considered in \cite{BeKi10}, these torsor laws extend
to semitorsor laws onto the whole projective geometry, in the same way as the group law
of $\Bij(X)$ for a set $X$ extends to a semigroup structure on $\Map(X,X)$:

\begin{theorem}\label{TheoremB}
Assume $(a,b)$ is a pair of  subgroups of $\Omega$. 
Then the following holds:
\begin{enumerate}
\item
The map $(x,y,z) \mapsto (xyz)_{b}$ defines a semitorsor structure on $\cP$.
\item
The map $(x,y,z) \mapsto (xyz)_{ab}$ defines a semitorsor structure on $\cP$.
\end{enumerate}
\end{theorem}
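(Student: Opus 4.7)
The plan is to check the para-associative law (T1) directly for both operations. I would fully expand each of the three nested triple products appearing in (T1) --- the five-term expressions $(xy(zuv))$, $((xyz)uv)$, and $(x(uzy)v)$ --- in terms of its underlying parameters in $a$ and $b$, and then exhibit an explicit bijection between the parameter tuples that witness membership on each side of the asserted identities. Since a semitorsor requires only (T1), not (T2), this is in principle an entirely formal verification; the subgroup hypothesis on $a$ and $b$ enters only at the step where the bijection produces new parameters, and one must confirm that these still lie in the appropriate subgroup.

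For part (1), unfolding $\omega \in (xy(zuv))_b$ yields the existence of a quadruple $(\beta_1,\beta_2,\beta_3,\beta_4) \in b^4$ with $\omega+\beta_1 \in x$, $\omega+\beta_2+\beta_1 \in y$, $\omega+\beta_2+\beta_3 \in z$, $\omega+\beta_2+\beta_4+\beta_3 \in u$, $\omega+\beta_2+\beta_4 \in v$, and similarly $\omega \in ((xyz)uv)_b$ yields an analogous system in variables $(\gamma_1,\gamma_2,\gamma_3,\gamma_4) \in b^4$. Setting $\gamma_1 := \beta_3$, $\gamma_2 := \beta_2+\beta_4$, $\gamma_3 := -\beta_3+\beta_1$, and $\gamma_4 := -\beta_3+\beta_2+\beta_3$ gives an explicit bijection between the two parameter systems, from which the identity $((xyz)uv)_b = (xy(zuv))_b$ follows by direct substitution. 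The critical point is that $\gamma_4 = -\beta_3+\beta_2+\beta_3$ is a conjugate of $\beta_2$ by $\beta_3$, which lies in $b$ only because $b$ is a subgroup; this is the unique place where that hypothesis enters. The remaining equality $(x(uzy)v)_b = (xy(zuv))_b$ is verified by an entirely analogous substitution.

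For part (2), the same procedure applied to each five-term $\Gamma$-product produces a condition on a quadruple $(\alpha_1,\alpha_2,\beta_1,\beta_2) \in a^2 \times b^2$. Here the situation is in fact cleaner: the matching between the three sides of (T1) can be arranged so that the substitutions between the parameter tuples involve only addition and inversion purely within $a$ or purely within $b$, with no cross-term conjugation, so the subgroup hypothesis is invoked only through closure under $+$ and under inversion. The main obstacle throughout is the combinatorial bookkeeping in the non-abelian setting: one must strictly respect the left-versus-right side on which each translation is applied. Introducing a preliminary reformulation of the defining conditions in left/right coset language --- namely $-\omega+\xi \in b$ encoding the left-$b$-translate relation and $\zeta+(-\omega) \in a$ encoding the right-$a$-translate relation --- before starting each expansion should keep the verification transparent.
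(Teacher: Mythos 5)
Your proposal is correct and follows essentially the same route as the paper: unfold each nested product into a system of membership conditions with parameters in $a$ and $b$, then exhibit an explicit invertible change of variables (your $\gamma_4=-\beta_3+\beta_2+\beta_3$ conjugation for $\Sigma$, and the purely ``separated'' substitutions for $\Gamma$, match the paper's computations). The only cosmetic difference is that the paper obtains the second equality of the para-associative law for $\Gamma$ from the symmetry relation $\check\Gamma(z,b,y,a,x)=\Gamma(x,a,y,b,z)$ rather than by a second direct substitution as you suggest.
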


\nin
We call the torsors $U_{ab}$ {\em balanced}, and the torsors $U_b$ {\em unbalanced}.
If $\Omega$ is non-abelian, the torsor $U_{bb}$ is different from $U_b$ -- the latter
are in general not members of a two-parameter family. This is due to the fact that
  the system of three equations defining $\Gamma$, called the {\em structure equations},
\begin{equation} \label{EquationS0}
\left\{
\begin{matrix}
\zeta &=& \alpha + \omega \cr
\eta & =& \alpha + \omega + \beta \cr
\xi &=& \omega + \beta 
\end{matrix} \right\}
\end{equation}
is of a more symmetric nature than the one defining $\Sigma$. 
We come back to this item below (Subsection \ref{Subsec:Symmetry}).

\subsection{Affine picture}
As usual in projective geometry, a ``projective statement'' may be translated into
an ``affine statement'' by choosing some ``affinization'' of $\cP$.
Thus one can rewrite the torsor law of $U_{ab}$ by an ``affine formula''
(Theorem \ref{PictureTheorem}). Here is a quite instructive special case:
consider two arbitrary groups, $(V,+)$ and $(W,+)$,  and fix a group homomorphism 
$A:W \to V$.
Let $G:=\Map(V,W)$ be the set of all maps from $V$ to $W$. Then it is an easy 
exercise to show that
\begin{equation}\label{AffEqn1}
X  \cdot_A   Y := Y + X\circ (\id_V  +  A  \circ Y )
\end{equation}
defines an associative product on $G$ (where $+$ is the pointwise ``sum''
of maps), with neutral element the ``zero map'' $0$, and
which gives a group law on the set
\begin{equation}\label{AffEqn2}
G_A:=
\{X \in G \mid \,  \id_V + A \circ X  \mbox{ is bijective } \}  .
\end{equation}
The parameter $A$ is a sort of ``deformation parameter'': if $A=0$, we get
pointwise addition; if $A$ is an isomorphism, then $G_A$ is in fact isomorphic
to the usual group $\Bij(V)$. 
If $V=W=\bR^n$, then one may do the same construction using continuous, or smooth,
maps, and thus gets a deformation of the abelian (additive) group $G$ of vector fields to the
highly non-commutative group $G_A$ of diffeomorphisms of $\bR^n$.
If $V,W$ are linear spaces and $X,Z$ linear maps, then (\ref{AffEqn1}) gives us back the 
law $X+ XAY + Y$ considered in \cite{BeKi10}.

\subsection{Distributive laws and near-rings.}
The reason why we are also interested in the unbalanced torsors is that
they are natural (being spaces of sections of a principal bundle over a homogeneous
space) and interact nicely with the balanced structures: there is a 
base-point free version of a {\em right
distributive law} which makes the whole object a torsor-analog of 
 a {\em near-ring} or a ``generalized ring'' (cf.\  \cite{Pi77}):

\begin{definition}\label{NearringDefinition}
A {\em (right) near-ring} is a set $N$ together with two binary operations, denoted by $+$ and
$\cdot$, such that:
\begin{enumerate}
\item $(N,+)$ is a group (not necessarily abelian),
\item $(N,\cdot)$ is a semigroup,
\item we have the {\em right distributive law} $(x+y)\cdot z = x\cdot z + y \cdot z$.
\end{enumerate}
\end{definition}

\nin A typical example is the set $N$ of self-maps of a group $(G,+)$, 
where $\cdot$ is composition and $+$  pointwise ``addition''. 
In our context, $\Gamma$ takes the role of the product $\cdot$, and $\Sigma$ takes
the one of the ``addition'' $+$ (cf.\ Theorem \ref{DistributiveTheorem}):

\begin{theorem}
Let $(a,b)$ be a pair of subgroups of $\Omega$. Then we have the following
{\em left distributive  law} relating the unbalanced and the
balanced torsor structures: for all $x,y \in U_{ab}$ and $u,v,w \in U_b$,
$$
(xy (uvw)_b )_{ab} = \bigl( (xyu)_{ab} (xyv)_{ab} (xyw)_{ab} )_{b} .
$$
\end{theorem}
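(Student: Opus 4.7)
The plan is to prove equality as subsets of $\Omega$ by unfolding each composite of structure maps into an explicit system of equations with existentially quantified witnesses, and then verifying the two inclusions. Setting $s := (uvw)_b$, the condition $\omega \in (xys)_{ab} = \Gamma(x,a,y,b,s)$ expands, after the inner $\Sigma$ is also unfolded, to the assertion that there exist $\xi \in x$, $\eta \in y$, $\alpha \in a$, $\beta, \beta_\mu, \beta_\nu \in b$, $\mu \in u$, $\nu \in v$, $\rho \in w$ with
\[
\xi = \omega + \beta, \quad \eta = \alpha + \omega + \beta, \quad \mu = \alpha + \omega + \beta_\mu, \quad \nu = \alpha + \omega + \beta_\nu + \beta_\mu, \quad \rho = \alpha + \omega + \beta_\nu.
\]
Unfolding the right-hand side in the opposite order produces parameters $\gamma_1, \gamma_2 \in b$ together with, for each $\bullet \in \{u,v,w\}$, an a priori independent quadruple $\xi^\bullet \in x$, $\eta^\bullet \in y$, $\alpha^\bullet \in a$, $\beta^\bullet \in b$, plus elements $\mu \in u$, $\nu \in v$, $\rho \in w$; these satisfy a nine-equation system obtained by substituting $\omega + \gamma_1$, $\omega + \gamma_2 + \gamma_1$, $\omega + \gamma_2$ respectively into the middle argument of each of the three copies of $\Gamma$.

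The inclusion LHS $\subseteq$ RHS is the easy direction. I would set $\gamma_1 := \beta_\mu$, $\gamma_2 := \beta_\nu$, copy $\xi^\bullet := \xi$, $\eta^\bullet := \eta$, $\alpha^\bullet := \alpha$ for every $\bullet$, and solve for the $\beta^\bullet$; for instance $\beta^u := -\beta_\mu + \beta$, which lies in $b$ because $b$ is a subgroup. A direct substitution then verifies the nine RHS-equations.

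The reverse inclusion RHS $\subseteq$ LHS is the substantive step, and this is the main obstacle. The difficulty is that the RHS provides three a priori distinct triples of $x$-, $y$-, $a$-witnesses, whereas the LHS requires each triple to collapse to a single element. My plan for the collapse is as follows. Since $b$ is a subgroup and $\gamma_i, \beta^\bullet \in b$, all three $\xi^\bullet$ lie in the right coset $\omega + b$; since $x \in {}^\top b$, this coset meets $x$ in exactly one point, so $\xi^u = \xi^v = \xi^w =: \xi$. The equation $\eta^\bullet = \alpha^\bullet + \xi^\bullet$ then exhibits $\xi$ in the form $\xi = (-\alpha^\bullet) + \eta^\bullet$ with $-\alpha^\bullet \in a$ and $\eta^\bullet \in y$; the uniqueness clause of the transversality $a \top y$ (available because $y \in a^\top$) forces $\alpha^u = \alpha^v = \alpha^w =: \alpha$ and $\eta^u = \eta^v = \eta^w =: \eta$. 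With the witnesses collapsed, setting $\beta := \gamma_1 + \beta^u$, $\beta_\mu := \gamma_1$, $\beta_\nu := \gamma_2$ allows one to read the LHS-equations directly off the RHS-equations. This collapsing step is where both halves of the hypothesis enter nontrivially -- the subgroup property of $b$ via the common right coset, and the transversality $a \top y$ via uniqueness of decomposition -- which also explains why the distributive law is stated as a torsor identity and is not claimed in the semitorsor setting of Theorem \ref{TheoremB}.
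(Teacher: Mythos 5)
Your proof is correct, but it takes a genuinely different route from the paper's. The paper deduces the distributive law from the ``affine picture'' of Theorem \ref{PictureTheorem}: writing $u,v,w$ as graphs of maps $U,V,W:y\to b$ and using $\Gamma(G_X,a,y,b,G_Z)=G_{X+Z\circ \Beta^{a,x,b}_y}$ together with the pointwise description of $U_b$ from Theorem \ref{PointwisetorsorTheorem}, the whole identity collapses to the one-line cancellation
$(X+U\circ\Beta)-(X+V\circ\Beta)+(X+W\circ\Beta)=X+(U-V+W)\circ\Beta$
in the (possibly non-abelian) group $\Map(y,b)$. You instead unfold both sides into their defining witness systems and prove the two inclusions by hand; the substantive step, collapsing the three a priori independent triples $(\xi^\bullet,\eta^\bullet,\alpha^\bullet)$ via $x\top b$ (all $\xi^\bullet$ lie in $\omega+b$, which meets $x$ at most once) and then $a\top y$ (uniqueness of the decomposition $\xi=-\alpha^\bullet+\eta^\bullet$), is exactly where the torsor hypotheses enter, and your bookkeeping of the witnesses ($\beta:=\gamma_1+\beta^u$, $\beta_\mu:=\gamma_1$, $\beta_\nu:=\gamma_2$, and $\beta^u:=-\beta_\mu+\beta$ in the easy direction) checks out. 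The paper's argument is shorter once Theorem \ref{PictureTheorem} is available and makes the near-ring interpretation transparent; yours is self-contained, needs none of the operator calculus, and has the merit of isolating precisely why the identity is stated for the torsors $U_{ab}$, $U_b$ rather than for the ambient semitorsors --- the collapse of witnesses is unavailable without transversality. The only blemish is terminological: what you call substituting into the ``middle argument'' of $\Gamma$ is really substitution for the bound variable $\omega$ of the structure equations (the middle argument of $\Gamma(x,a,y,b,u)$ is $y$); your subsequent computations make clear that this is what you mean.
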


\nin Essentially, this means that $ { }^\top b$ looks like a ternary version of a near-ring,
whose ``multiplicative'' structure now depends on an additional parameter $y$.
As usual for near-rings, there is just one distributive law: the other distributive law
does not hold!
Compare with (\ref{AffEqn1}) which is ``affine'' in $X$, but not in $Y$.

\subsection{Symmetry}\label{Subsec:Symmetry}
The preceding theorem makes it obvious that the definition of $\Gamma$ involves
some arbitrary choices: there is no reason why left distributivity should be 
preferred to right distributivity!
Indeed, if instead of $\Gamma$, we looked at the map $\check \Gamma$ obtained
by using everywhere the opposite group law of $(\Omega,+)$, then we would get
``right'' instead of ``left distributivity''.
Thus $\Gamma$ and $\check \Gamma$ are, in a certain sense, ``equivalent''.
In the same way, there is no reason to prefer the groups $a$ or $b$ to their
opposite groups: in the structure equations we might replace $\alpha$ or $\beta$
by their negatives, without changing the whole theory.
Thus we are led to consider several versions of the fundamental equations as
``essentially equivalent''. 
We investigate this item in Section \ref{Sec:Symmetry}: there are in fact 24 
{\em signed}  (i.e., essentially equivalent) versions of the structure equations on
which a certain subgroup $\bV$ of the permutation group $S_6$, permuting  the six
variables of (\ref{EquationS0}),  acts simply transitively (Theorem \ref{BigKleinTheorem}).
We call $\bV$ the {\em Big Klein Group}\footnote{translated from the
German {\em Grosse Klein Gruppe}} since it plays exactly the same r\^ole for the
structure equations as the usual Klein Group $V$ does for a single torsor
structure (cf.\ Lemma   \ref{TorsorLemma}). 
The group $\bV$ is isomorphic to $S_4$, sitting inside $S_6$ as the subgroup 
preserving  the partition of six letters in three subsets
$\{ \xi, \zeta \}$, $\{\alpha ,\beta \}$ and $\{ \eta,\omega \}$ (Lemma \ref{BigKleinLemma}).
Permutations from $\bV$ leave invariant the general shape of (\ref{EquationS0})
and introduce just certain sign changes for some of the variables.
If we are willing to neglect such sign changes -- like, for instance, in the ``projective'' framework of
\cite{BeKi10}, where one can rescale by any invertible scalar -- then the whole
theory becomes invariant under these permutations.\footnote{A side remark: 
the author cannot help feeling being reminded by this situation to
CPT-invariance in physics, where a very similar phenomenon occurs.
} 
 This explains partially why
the associative geometries from \cite{BeKi10} (and their Jordan theoretic analogs)
have such a high degree of symmetry (cf.\ the ``symmetry'' and ``duality principles''
for Jordan theory, \cite{Lo75}).
If we agree to neglect sign changes only with respect to $\alpha$ and $\beta$
(which is reasonable since in Theorem \ref{TheoremA} we assume that $a$ and $b$
are {\em subgroups}, hence $\alpha \in a$ iff $-\alpha \in a$, and
same for $b$), then we obtain as invariance group again a usual
Klein Group $V$, and the orbit under $\bV$ has $24/4 = 6$ elements.
This, in turn, is completely analogous to the behavior of the classical
{\em cross-ratio} under $S_4$, which is invariant under $V$ and takes generically
$6$ different values under permutations.

\subsection{Further topics}
Because of its generality, the approach presented in this work is likely to interact
with many other mathematical theories. In the last section we mention some
questions arising naturally in this context, and we refer to Section 4 of \cite{BeKi10}
for some more remarks of a similar kind.


\begin{notation}
Throughout this paper, $\Omega$ is a (possibly non-commutative) group,
called the {\em background},  whose
group law will be written additively. 
Its neutral element will be denoted by $o$.
We denote by $\cP := \cP(\Omega)$ its power set, by
$\cP^o:=(\Omega)$ the set of subsets of $\Omega$ containing the neutral element $o$,
and by $\Gras(\Omega)$ the Grassmannian of $\Omega$ (the set of all subgroups of $\Omega$).
{\em Transversality}, as defined in Definition \ref{TransversalDefinition} above, is denoted 
by $x \top y$. 
\end{notation}

%
%
%

\section{Structure maps and structure space}

\begin{definition}\label{StructureDefinition} The {\em structure maps} of a group $(\Omega,+)$ are
the maps $\Gamma:\cP^5 \to \cP$, $\check \Gamma:\cP^5 \to \cP$,
$\Sigma:\cP^4 \to \cP$ and $\check\Sigma:\cP^4 \to \cP$ defined, for
 $x,a,y,b,z \in \cP$, by
\begin{align}
\Gamma(x,a,y,b,z) &:=
\Bigsetof{\omega \in \Omega}
{\begin{array}{c}
\exists \xi \in x,
\exists \alpha \in a,
\exists \eta \in y,
\exists \beta \in b,
\exists \zeta \in z : \\
\eta =  \alpha +  \omega + \beta,
\zeta = \alpha + \omega,
\xi = \omega  + \beta
\end{array}} ,
\end{align}
\begin{align}
\check \Gamma(x,a,y,b,z) &:=
\Bigsetof{\omega \in \Omega}
{\begin{array}{c}
\exists \xi \in x,
\exists \alpha \in a,
\exists \eta \in y,
\exists \beta \in b,
\exists \zeta \in z : \\
\eta =  \beta + \omega + \alpha,
\zeta = \omega + \alpha,
\xi = \beta + \omega  
\end{array}} \, ,
\end{align}
\begin{align}
\Sigma(a,x,y,z) &:=
\Bigsetof{\omega \in \Omega}
{\begin{array}{c}
 \exists \xi \in x,\exists \eta \in y,\exists \zeta \in z,\exists \beta,\beta' \in b: \\
\xi = \omega + \beta, \,
\eta = \omega + \beta' + \beta, \, 
\zeta = \omega +\beta' 
\end{array}} \, ,
\end{align}
\begin{align}
\check \Sigma(a,x,y,z) &:=
\Bigsetof{\omega \in \Omega}
{\begin{array}{c}
 \exists \xi \in x,\exists \eta \in y,\exists \zeta \in z,\exists \beta,\beta' \in b: \\
\xi = \beta + \omega , \,
\eta = \beta + \beta' + \omega , \, 
\zeta = \beta' + \omega  
\end{array}} \, .
\end{align}
It is obvious that the set $\cP^o(\Omega)$ of subsets containing $o$ is stable under each of these maps,
and the corresponding restrictions of the four maps will also be called {\em structure maps}.
\end{definition}

\nin Note that $\check \Gamma$, resp.\ $\check \Sigma$,
 is obtained from $\Gamma$, resp.\ $\Sigma$ simply by replacing the group
law in $\Omega$ by the opposite group law.
Hence, if $\Omega$ is abelian, we have $\Gamma = \check \Gamma$ and
$\Sigma = \check \Sigma$.
Moreover, if $\Omega$ is abelian,  we obviously have
\begin{equation}
\Gamma(x,a,y,a,z) = \Sigma(a,x,y,z)=\Sigma(a,z,y,x) .
\end{equation}
For general $\Omega$,
the defining equations immediately imply the {\em symmetry relation}
\begin{equation} \label{Sym1}
\check \Gamma(z,b,y,a,x) = \Gamma(x,a,y,b,z) .
\end{equation}

\begin{definition}\label{StructurespaceDefinition}
The system (\ref{EquationS0}) of three equations for six variables in $\Omega$
is called  the {\em structure equations}. 
We say that another system of equations is {\em equivalent} to the structure equations if
it has the same set of solutions,  called
the {\em structure space} of the group $(\Omega,+)$:  
$$
\Gamm:=
\Bigsetof{ (\xi,\zeta; \alpha,\beta; \eta,\omega) \in \Omega^6}
{\eta =  \alpha +  \omega + \beta, \quad
\zeta = \alpha + \omega, \quad
\xi = \omega  + \beta }  \, .
$$
By definition, the {\em opposite structure space} is the structure space of $\Omega^{opp}$:
$$
\check \Gamm:=
\Bigsetof{ (\xi,\zeta; \alpha,\beta; \eta,\omega) \in \Omega^6}
{\eta =  \beta +  \omega + \alpha, \quad
\zeta = \omega + \alpha,\quad
\xi = \beta  + \omega } .
$$
The sets ${\mathbf \Sigma} \subset \Omega^5$ and $\check{\mathbf \Sigma} \subset \Omega^5$ 
can be defined similarly.
\end{definition}

\begin{lemma}\label{SymmetryLemma}
The following systems are all equivalent to the structure equations:
\begin{equation}\label{2.3}
\left\{
\begin{matrix}
\alpha &=& \eta - \xi \cr
\omega &=& \xi - \eta + \zeta \cr
\beta &=& - \zeta + \eta 
\end{matrix} \right\}
\end{equation}

\begin{equation}\label{EquationS2}
\left\{
\begin{matrix}
\eta &=& \alpha + \omega + \beta \cr
\eta &=& \alpha +  \xi \cr
\eta &=& \zeta + \beta
\end{matrix}
\right\} 
\qquad \qquad
\left\{
\begin{matrix}
\eta &=& \zeta - \omega + \xi \cr
\eta &=& \alpha +  \xi \cr
\eta &=& \zeta + \beta
\end{matrix}
\right\}
\end{equation}

\begin{equation}\label{EquationS3}
\left\{
\begin{matrix}
\omega & = &  \xi - \eta + \zeta  \cr
\omega & = & \xi - \beta  \cr
\omega  & = & - \alpha + \zeta
\end{matrix} 
\right\}
\qquad \qquad
\left\{
\begin{matrix}
\omega & = &  -\alpha  + \eta  - \beta  \cr
\omega & = & \xi - \beta  \cr
\omega  & = & - \alpha + \zeta
\end{matrix} 
\right\}
\end{equation}

\begin{equation}\label{EquationS4}
\left\{
\begin{matrix}
\alpha &=& \zeta + \beta - \xi \cr
\alpha &=& \eta - \xi \cr
\alpha &=& \zeta - \omega 
\end{matrix}
\right\}
\qquad \qquad
\left\{
\begin{matrix}
\alpha &=& \eta - \beta - \omega \cr
\alpha &=& \eta - \xi \cr
\alpha &=& \zeta - \omega 
\end{matrix}
\right\}
\end{equation}

\begin{equation}\label{EquationS5}
\left\{
\begin{matrix}
\beta & = &  - \zeta + \alpha  + \xi  \cr
\beta & = &  - \omega + \xi  \cr
\beta  & = & - \zeta + \eta 
\end{matrix} 
\right\}
\qquad \qquad
\left\{
\begin{matrix}
\beta & = &  - \omega  -  \alpha  + \eta  \cr
\beta & = &  - \omega + \xi  \cr
\beta  & = & - \zeta + \eta 
\end{matrix} 
\right\}
\end{equation}

\begin{equation}\label{EquationS6}
\left\{
\begin{matrix}
\xi & = &  - \alpha + \zeta + \beta  \cr
\xi & = &  - \alpha + \eta   \cr
\xi  & = &  \omega + \beta
\end{matrix} 
\right\}
\qquad \qquad
\left\{
\begin{matrix}
\xi & = &  \omega - \zeta + \eta  \cr
\xi & = &  - \alpha + \eta   \cr
\xi  & = & \omega + \beta
\end{matrix} 
\right\}
\end{equation}

\begin{equation}\label{EquationS7}
\left\{
\begin{matrix}
\zeta & = &  \eta -\xi  + \omega  \cr
\zeta & = &  \eta - \beta  \cr
\zeta  & = &  \alpha + \omega 
\end{matrix} 
\right\}
\qquad \qquad
\left\{
\begin{matrix}
\zeta & = &  \alpha + \xi  - \beta  \cr
\zeta & = &  \eta - \beta  \cr
\zeta  & = &  \alpha + \omega 
\end{matrix} 
\right\}
\end{equation}

\begin{equation}\label{EquationS8}
\left\{
\begin{matrix}
\eta  &=& \alpha + \xi \cr
\beta  &=& - \omega + \xi \cr
\zeta &=& \alpha + \omega
\end{matrix}
\right\}
\qquad
\left\{
\begin{matrix}
\alpha &=& \eta - \xi \cr
\zeta  &=& \eta - \beta \cr
\omega  &=& \xi - \beta
\end{matrix}
\right\}
 \qquad
\left\{
\begin{matrix}
\alpha &=& \zeta - \omega \cr
\xi   &=& \omega +  \beta \cr
\eta  &=& \zeta + \beta
\end{matrix}
\right\}
\end{equation}
\end{lemma}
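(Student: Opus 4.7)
The plan is to treat each of the displayed systems as a bookkeeping exercise in the free group on the six letters $\xi,\zeta,\alpha,\beta,\eta,\omega$ modulo the relations (\ref{EquationS0}). Since only associativity and inverses are used — no commutativity — every equivalence reduces to transporting one or two terms from one side of an equation to the other, always being careful that $-(a+b)=-b-a$. I would not verify the $3\cdot 15 = 45$ individual displayed equations one by one, but rather build a small toolkit from (\ref{EquationS0}) and observe that each listed system is a recombination of entries from this toolkit.

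The first step is to collect the immediate consequences of the structure equations. Substituting $\xi = \omega+\beta$ into $\eta = \alpha+\omega+\beta$ gives $\eta = \alpha+\xi$, and substituting $\zeta = \alpha+\omega$ gives $\eta = \zeta+\beta$. Chaining the two yields $\eta = \zeta - \omega + \xi$, equivalently $\omega = \xi - \eta + \zeta$. Solving each of these five base identities for each of its summands produces the ``atomic'' equalities
\[
\alpha = \zeta - \omega = \eta - \xi,\quad
\beta = -\omega + \xi = -\zeta + \eta,\quad
\omega = -\alpha + \zeta = \xi - \beta = -\alpha + \eta - \beta,
\]
together with the dual forms for $\xi$, $\zeta$, $\eta$ obtained by transposing the relevant equation. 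This is a finite list, readily tabulated, and every right-hand side appearing in (\ref{2.3})--(\ref{EquationS8}) is one of its entries.

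The second step is then simply to match: each of the systems (\ref{2.3})--(\ref{EquationS7}) fixes a single variable among $\{\alpha,\beta,\omega,\xi,\zeta,\eta\}$ and expresses it in three different ways taken from the toolkit, while (\ref{EquationS8}) groups the atomic identities three by three. This proves one direction of each equivalence. For the converse, given any of the systems, one picks any three of the six letters as ``free parameters'' (for instance $\alpha,\omega,\beta$ in (\ref{EquationS8}) left) and solves the three equations for the other three; inspection shows the resulting parametrization of the solution set coincides with the one read off directly from (\ref{EquationS0}). Equivalently, each system contains enough information to recover the three original defining equalities by elementary substitution, which I would illustrate once and leave the rest to symmetry.

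The only real pitfall — and the part to treat carefully — is the non-commutativity: an equation like $\eta = \alpha + \xi$ must be inverted as $\alpha = \eta - \xi$ (left subtraction of $\xi$), not $\alpha = -\xi + \eta$, and chains of substitutions such as $\omega = \xi - \beta = \xi - (-\zeta + \eta) = \xi - \eta + \zeta$ rely on $-(-\zeta+\eta) = -\eta+\zeta$. Once the toolkit is written out with the correct left/right placement of signs, verifying the lemma is mechanical; the symmetry observations made in Subsection~\ref{Subsec:Symmetry} and formalized later via the action of the Big Klein Group $\bV$ explain why all these systems must in fact be equivalent, and would provide a more conceptual, permutation-theoretic reading of the same list.
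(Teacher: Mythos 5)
Your proposal is correct and matches the paper's approach: the paper's entire proof is the one-line assertion that the verification is ``by completely elementary computations,'' and your toolkit of atomic identities (with careful left/right sign placement) plus the free-parameter argument for the converse inclusion is exactly a sensible way to organize those computations. No gap.
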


\nin The proof is by completely elementary computations. 
Obviously, the structure space has certain 
symmetry properties with respect to permutations. This will be investigated
in more detail in Section \ref{Sec:Symmetry} .
Note also that,
if $\Omega$ is abelian, the structure equations are {\em $\Z$-linear}, and hence can
be written in matrix form
$$
\begin{pmatrix} 1 & 1 & 0 \cr 1 & 1 & 1 \cr 0 & 1 & 1 \end{pmatrix}
\begin{pmatrix} \alpha \cr \omega \cr \beta \end{pmatrix}=
\begin{pmatrix} \zeta \cr \eta \cr \xi \end{pmatrix} \, .
$$
Equations (\ref{2.3}) then correspond to the inverse of this matrix. 

\section{The semitorsor laws}\label{Sec:Semitorsor}

\begin{theorem}\label{SemitorsorTheorem}
Assume that $a$ and $b$ are two subgroups of a  group $(\Omega,+)$.
Then the power set $\cP$ and its subset $\cP^o$ become semitorsors under the ternary
compositions
\begin{align*}
\cP^3 \to \cP, & \quad (x,y,z) \mapsto (xyz)_{ab}:=\Gamma(x,a,y,b,z), \cr
\cP^3 \to \cP, & \quad (x,y,z) \mapsto (xyz)_{ab}\check{ }:=\check \Gamma(x,a,y,b,z) ,\cr
\cP^3 \to \cP, & \quad (x,y,z) \mapsto (xyz)_b:=\Sigma(b,x,y,z),\cr
\cP^3 \to \cP, & \quad (x,y,z) \mapsto (xyz)_b\check{ }:=\check \Sigma(b,x,y,z) .
\end{align*}
We denote these semitorsors by $\cP_{ab}$, $\check \cP_{ab}$, $\cP_b$, $\check \cP_b$, 
respectively.
\end{theorem}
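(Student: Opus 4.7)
\msk
\nin My plan is to verify the para-associative law (T1) directly for each of the four ternary products, by unfolding nested $\Gamma$'s (resp.\ $\Sigma$'s) into single existential statements on $\omega$. First, I would exploit the symmetry relation $\check\Gamma(z,b,y,a,x) = \Gamma(x,a,y,b,z)$ (together with its $\Sigma$-analog, obtained by passing to the opposite group $\Omega^{opp}$, in which $a$ and $b$ are again subgroups): this reduces the semitorsor claim for the ``checked'' laws $\check{\cP}_{ab}, \check{\cP}_b$ to that for $\cP_{ab}, \cP_b$. So the real content is to prove (T1) for $\Gamma$ and for $\Sigma$.

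\msk
\nin For the $\Gamma$-case, the idea is to expand each of the three parenthesizations
$$\bigl((xyz)_{ab}\,u\,v\bigr)_{ab}, \quad \bigl(x\,y\,(zuv)_{ab}\bigr)_{ab}, \quad \bigl(x\,(uzy)_{ab}\,v\bigr)_{ab}$$
into a single condition on $\omega$. For the first, $\omega$ lies in the set iff there exist set-witnesses $\xi \in x, \eta \in y, \zeta \in z, u_1 \in u, v_1 \in v$ and parameters $\alpha,\alpha' \in a$, $\beta,\beta' \in b$, such that substituting the inner output $\xi' = \omega + \beta'$ back into the inner structure equations produces the merged system $\xi = \omega + \beta' + \beta$, $\eta = \alpha + \omega + \beta' + \beta$, $\zeta = \alpha + \omega + \beta'$, $u_1 = \alpha' + \omega + \beta'$, $v_1 = \alpha' + \omega$. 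The other two parenthesizations give analogous five-equation systems with four $a/b$-parameters each. I would then exhibit explicit bijections between the three parameter spaces, using changes of variable like $\tilde\alpha := \alpha' + \alpha$ and $\tilde\beta := \beta' + \beta$, which transport one system of constraints onto the next. These bijections are legitimate because $a$ and $b$ are subgroups, so the new parameters range over $a$ and $b$ precisely when the old ones do.

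\msk
\nin For the $\Sigma$-case, only the subgroup $b$ enters and the computation is structurally simpler. Unfolding $((xyz)_b\, u\, v)_b$ yields an existential statement with four $b$-parameters $\beta_1,\beta_1',\beta_2,\beta_2'$, and the other two parenthesizations yield similar four-parameter systems. Here I would show that, given the parameters on one side, one can solve explicitly for those on the other by formulas of the form $\beta = \beta_1+\beta_2$, $\beta' = \beta_1+\beta_2'-\beta_1$, etc. All such expressions remain in $b$ because $b$ is a subgroup, which is exactly the hypothesis needed.

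\msk
\nin The main obstacle is not conceptual but bookkeeping: in the non-abelian setting, rearranging the nested structure equations produces conjugate-type expressions (e.g.\ $\beta_1+\beta_2'-\beta_1$) rather than plain sums, and each must be verified to lie in the appropriate subgroup so that the existential quantifiers range over the same sets on both sides of (T1). The subgroup hypothesis on $a$ and $b$ is used exactly here, and nowhere else does any further property of $\Omega$ come in. Once the parameter bijections are written down carefully, para-associativity is immediate, establishing all four semitorsor structures simultaneously. The stability of $\cP^o$ follows from the obvious observation that $\omega = o$ arises by taking all set-witnesses equal to $o$ and all $a/b$-parameters equal to $o$ (which are available since $a,b$ are subgroups and all inputs contain $o$).
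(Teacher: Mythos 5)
Your proposal is correct and follows essentially the same route as the paper's proof: unfold the nested compositions into single existential systems on $\omega$, exhibit an explicit change of parameters (e.g.\ $\tilde\alpha=\alpha'+\alpha$, and conjugate-type expressions in the $\Sigma$-case) whose legitimacy rests exactly on $a$ and $b$ being subgroups, and dispose of the checked laws via the opposite group together with the symmetry relation $\check\Gamma(z,b,y,a,x)=\Gamma(x,a,y,b,z)$. The paper proves one of the two defining identities of (T1) this way and obtains the other from the symmetry relation rather than expanding all three parenthesizations, but this is only a minor economy, not a different method.
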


\begin{proof}  We prove,
for $x,y,z \in \cP(\Omega)$, the identity 
\[
\Gamma\bigl(x,a,u,b,\Gamma(y,a,v,b,z)\bigr) =
\Gamma\bigl(x,a,\Gamma(v,a,y,b,u),b,z\bigr) =
\Gamma\bigl(\Gamma(x,a,u,b,y),a,v,b,z\bigr)\, ,
\]
i.e., the semitorsor law for $(xyz)_{ab}$. For the proof, note that
the definition of $\Gamma(x,a,y,b,z)$ can be written somewhat shorter, as follows:
\begin{align}
\Gamma(x,a,y,b,z) &=
\Bigsetof{\omega \in \Omega}
{\begin{array}{c}
\exists \alpha \in a,
\exists \beta \in b: \\
 \alpha +  \omega + \beta \in y, \quad
\alpha + \omega \in z, \quad
\omega  + \beta \in x
\end{array}} ,
\end{align}
and similarly for $\check \Gamma$. 
We refer to this description as {\em $(a,b)$-description}.
Using this, we have,
on the one hand, 

\smallskip
\noindent$\Gamma\big(x,a,u,b,\Gamma(y,a,v,b,z)\big) =$
\begin{align*}
&= \Bigsetof{ \omega \in \Omega}{
\begin{array}{c}
\exists \alpha \in a, \exists \beta \in b : \\
\alpha + \omega  \in \Gamma(y,a,v,b,z), \alpha + \omega + \beta \in u,
\omega  + \beta \in x
\end{array} } \\
&= \Bigsetof{ \omega \in \Omega}{
\begin{array}{c}
\exists \alpha \in a, \exists \beta \in b,
\exists \alpha' \in a, \exists \beta' \in b : \\
\alpha + \omega + \beta \in u,
\omega + \beta \in x,
\alpha' + \alpha + \omega  \in z, \\
\alpha' + \alpha + \omega + \beta' \in v,
\alpha + \omega + \beta' \in y
\end{array} } \ .
\end{align*}
On the other hand,

\smallskip
\noindent$\Gamma\big(x,a,\Gamma(v,a,y,b,u),b,z\big) =$
\begin{align*}
&= \Bigsetof{ \omega \in \Omega}{
\begin{array}{c}
\exists \alpha'' \in a, \exists \beta'' \in b : \\
\alpha'' + \omega \in z,  \alpha'' + \omega + \beta'' \in \Gamma(v,a,y,b,u),
\omega  + \beta'' \in x
\end{array} } \\
&= \Bigsetof{ \omega \in \Omega}{
\begin{array}{c}
\exists \alpha'' \in a, \exists \beta'' \in b,
\exists \alpha''' \in a, \exists \beta''' \in b : \\
\alpha'' + \omega \in z,\omega + \beta'' \in x,
\alpha''' + \alpha'' + \omega + \beta'' \in u, \\
\alpha'' + \omega + \beta'' + \beta'''  \in v,
\alpha''' + \alpha'' + \omega + \beta'' + \beta'''   \in y
\end{array} } \ .
\end{align*}
Via the change of variables
$\alpha''=\alpha' + \alpha$,
$\alpha''' = \alpha'$,
$\beta''=\beta$,
$\beta'''=-\beta + \beta'$, we see that these two subsets of $\Omega$ are the same.
(Here we use that $a$ and $b$ are groups!)
This proves the first defining equality of a semitorsor for $\Gamma$.
Since $\Omega^{opp}$ is again a group, it holds also for $\check \Gamma$.
The second equality now follows from the first one, using the symmetry relation (\ref{Sym1}).

\ssk
Now consider the product $(xyz)_b$. 
Similarly as above, we have
\begin{align}\label{SigmaEqn1}
\Sigma(b,x,y,z) &=\Bigsetof{\omega \in \Omega}
{\begin{array}{c}
 \exists \beta,\beta' \in b: \\
 \omega + \beta \in x , \, \omega +\beta' + \beta \in y , \, \omega + \beta' \in z
\end{array}} 
\end{align}
Using (\ref{SigmaEqn1}), we have on the one hand, 

\smallskip
\noindent$(x,u,(y,v,z)_b)_b =$
\begin{align*}
&= \Bigsetof{ \omega \in \Omega}{
\begin{array}{c}
\exists \alpha \in b, \exists \beta \in b : \\
\omega + \alpha  \in (y,v,z)_b, \omega + \alpha + \beta \in u,
\omega  + \beta \in x
\end{array} } \\
&= \Bigsetof{ \omega \in \Omega}{
\begin{array}{c}
\exists \alpha \in b, \exists \beta \in b,
\exists \alpha' \in b, \exists \beta' \in b : \\
\omega + \alpha + \beta \in u,
\omega + \beta \in x,
 \omega + \alpha + \alpha'  \in z, \\
\omega + \alpha + \alpha' + \beta' \in v,
 \omega + \alpha +  \beta' \in y
\end{array} } \ .
\end{align*}
On the other hand,

\smallskip
\noindent$\big(x,(v,y,u)_b z\big)_b =$
\begin{align*}
&= \Bigsetof{ \omega \in \Omega}{
\begin{array}{c}
\exists \alpha'' \in b, \exists \beta'' \in b : \\
\omega + \alpha''  \in z,  \omega + \alpha'' + \beta'' \in (v,y,u)_b,
\omega  + \beta'' \in x
\end{array} } \\
&= \Bigsetof{ \omega \in \Omega}{
\begin{array}{c}
\exists \alpha'' \in b, \exists \beta'' \in b,
\exists \alpha''' \in b, \exists \beta''' \in b : \\
\omega + \alpha''  \in z,\omega + \beta'' \in x,
\omega + \alpha''  + \beta'' + \alpha'''  \in u, \\
\omega + \alpha''  + \beta'' + \beta'''  \in v,
\omega  + \alpha'' + \beta'' + \alpha'''+ \beta'''   \in y
\end{array} } \ . 
\end{align*}
Via the change of variables
$\beta = \beta''$,
$\beta' = \beta'' + \beta'''$,
$\alpha = \alpha'' + \beta'' + \alpha''' - \beta'''$,
$\alpha'=\beta'' - \alpha''' - \beta''$, we see that these two subsets of $\Omega$ are the same.
This proves the first defining equality of a semitorsor for $(xyz)_b$.
The proof of the other defining equality, as well as for the semitorsor structure
$(xyz)_b\check{ }$, are similar.
\end{proof}

\begin{definition}
We call the semitorsors $\cP_{ab}, \check \cP_{ab}$  {\em balanced} and 
 $\cP_b, \check \cP_b$ {\em unbalanced}.
\end{definition}

\nin By the symmetry relation (\ref{Sym1}), $\check \cP_{ba}$ is the opposite semitorsor 
of $\cP_{ab}$ (where, for any semitorsor $(xyz)$, the {\em opposite} law is just
$(zyx)$), whereas $\check \cP_b$ is {\em not} the opposite semitorsor of $\cP_b$. 
Thus, given a subgroup $b\subset \Omega$,
we have in general six different semitorsor laws on $\cP$:
$\cP_{bb}$, $\cP_b$, $\check \cP_b$, along with their opposite laws. 
If $\Omega$ is commutative, then of course these six semitorsor  laws coincide.
More generally:

\begin{theorem}\label{SubgroupTheorem}
Assume that $a$ and $b$ are {\em central} subgroups of $\Omega$.
Then:
\begin{enumerate}
\item
$\cP_{ab} = \check \cP_{ab}=\cP_{ba}^{opp}$ and
$\cP_b = \check \cP_b = \check \cP^{opp} = \cP_b^{opp}$,
\item
$\Gras(\Omega)$ is stable under all ternary laws from
Theorem \ref{SemitorsorTheorem}.
\end{enumerate}
\end{theorem}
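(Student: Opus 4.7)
The plan rests on two observations: centrality of $a$ and $b$ collapses the distinction between $\Omega$ and $\Omega^{opp}$ as far as these particular structure maps see it, and a central subgroup is automatically abelian (so we may also freely reorder elements within $b$). With this, each claim reduces to a direct inspection of the defining formulas.

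For part (1), I would first compare the defining equations for $\Gamma$ and $\check\Gamma$ in Definition \ref{StructureDefinition}: they differ only by reversing the order of $\alpha$ (resp.\ $\beta$) against $\omega$. Since $a$ and $b$ are central, $\alpha + \omega = \omega + \alpha$ and $\beta + \omega = \omega + \beta$ for every $\omega \in \Omega$, so the two systems of equations have the same solution set; hence $\Gamma = \check\Gamma$ as set-valued maps, and likewise $\Sigma = \check\Sigma$. This yields $\cP_{ab}=\check\cP_{ab}$ and $\cP_b = \check\cP_b$. For the opposite relations, I would apply the symmetry relation (\ref{Sym1}), $\check\Gamma(z,b,y,a,x)=\Gamma(x,a,y,b,z)$, which says $\cP_{ab}^{opp} = \check\cP_{ba}$; combined with $\check\cP_{ba} = \cP_{ba}$ from the previous step this gives $\cP_{ab} = \cP_{ba}^{opp}$. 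For the equality $\cP_b = \cP_b^{opp}$ I would use that $b$ is abelian (central $\Rightarrow$ abelian): writing out $(zyx)_b$ using (\ref{SigmaEqn1}) and swapping the dummy variables $\beta \leftrightarrow \beta'$ gives exactly $(xyz)_b$, since $\beta + \beta' = \beta' + \beta$.

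For part (2), let $x,y,z$ be subgroups of $\Omega$. I would verify directly that $(xyz)_{ab}$ is a subgroup. The neutral element lies in it because the witnesses $\alpha=\beta=o$ satisfy the three conditions ($o \in x,y,z$). For closure under addition, given $\omega_1,\omega_2$ with witnesses $(\alpha_i,\beta_i)$, I take $\alpha := \alpha_1+\alpha_2 \in a$ and $\beta := \beta_1+\beta_2 \in b$ as witnesses for $\omega_1+\omega_2$: using centrality to shuffle $\alpha_2$ past $\omega_1$ and $\beta_1$ past $\omega_2$, each of the three expressions $\alpha+\omega_1+\omega_2+\beta$, $\alpha+\omega_1+\omega_2$, $\omega_1+\omega_2+\beta$ factors as the sum of the corresponding witnesses for $\omega_1$ and $\omega_2$, landing in $y$, $z$, $x$ respectively. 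For inverses, the witnesses $(-\alpha,-\beta)$ work by the same centrality argument, since $-(\alpha+\omega+\beta)=-\alpha+(-\omega)+(-\beta)$ under centrality. An entirely analogous computation handles $(xyz)_b$ using witnesses $(\beta_1+\beta_2,\beta_1'+\beta_2')$. The $\check{}$-versions follow for free from part (1).

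I do not expect any genuine obstacle here: the proof is essentially the same as the abelian background case, and the only subtlety is keeping the bookkeeping of the commutations clean. The one place where care is needed is that $\cP_b = \cP_b^{opp}$ uses internal commutativity of $b$ (not only centrality against $\Omega$), which is why the hypothesis ``central'' is used rather than merely ``normal''; this is the point I would flag in the write-up.
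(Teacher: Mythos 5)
Your proposal is correct and follows essentially the same route as the paper: part (1) by direct comparison of the defining equations (plus the symmetry relation (\ref{Sym1})), and part (2) by writing the structure equations for $\omega+\omega'$ and $-\omega$ with witnesses $\alpha_1+\alpha_2$, $\beta_1+\beta_2$ and using that elements of $a$ and $b$ commute with everything. The paper's proof is only a one-line sketch of exactly this argument, so your write-up is simply a fleshed-out version of it; your closing remark that $\cP_b=\cP_b^{opp}$ needs internal commutativity of $b$ is a correct and worthwhile observation.
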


\begin{proof}
The first statement follows immediately from the definitions, and the second by writing the
structure equations for $\omega + \omega'$, resp.\ for $-\omega$,  with
$\omega,\omega' \in \Gamma(x,a,y,b,z)$, and using that variables from $a$ and $b$
commute with the others.
\end{proof}

Note that our condition  is sufficient, but not necessary with respect to item (2): 
for instance, if $\Omega$ is a direct product of $a$ and $b$ (as a group), 
then the result of the next section implies that
$\Gras(\Omega)$ is a subsemitorsor of $\cP_{ab}$. 
For general subgroups $a,b$, this is no longer true:
the subsemitorsor generated by $\Gras(\Omega)$ will be strictly bigger.

\section{The transversal case:  composition of relations in groups}\label{sec:Composition}

Recall the definition of {\em (left) transversality} (Definition \ref{TransversalDefinition}), denoted
by $a \top b$. For a fixed transversal pair, we may identify $\Omega$ as a set with $a \times b$ 
via $(\alpha,\beta) \mapsto \alpha + \beta$. 
Then (by definition)  the power set
$\cP(\Omega)$ is identified with the set $\Rel(a,b)$ of {\em relations between $a$ and $b$}.

\begin{theorem}\label{CompositionTheorem}
Let $(a,b)$ be a pair of left transversal subgroups of  $\Omega$.
Then the ternary {\em composition
$z \circ y\inv \circ x$ of  relations} $x,y,z \in \cP = {\Rel}(a,b)$ is given by
$$
z\circ y^{-1} \circ x = \Gamma(x,a,y,b,z) \, .
$$
If $a$ and $b$ commute, then $ \Gras(\Omega)$  is stable
under this ternary law.
\end{theorem}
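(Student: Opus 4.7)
The plan is to exploit the set identification $\Omega \cong a \times b$ coming from left transversality: every $\omega \in \Omega$ has a unique decomposition $\omega = \alpha_0 + \beta_0$ with $\alpha_0 \in a$, $\beta_0 \in b$, and a subset $x \subset \Omega$ becomes a relation $x \subset a \times b$ by declaring $(\alpha,\beta) \in x$ iff $\alpha + \beta \in x$. Under this dictionary the ternary composition $z \circ y^{-1} \circ x$ has its usual meaning as composition of relations.

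For the first claim, I would unfold the $(a,b)$-description of $\Gamma(x,a,y,b,z)$ at $\omega = \alpha_0 + \beta_0$ and perform the change of variables $\alpha' := \alpha + \alpha_0 \in a$ and $\beta' := \beta_0 + \beta \in b$, which is legitimate because $a$ and $b$ are subgroups; as $\alpha,\beta$ range over $a,b$, so do $\alpha',\beta'$. The three defining conditions $\omega + \beta \in x$, $\alpha + \omega + \beta \in y$, $\alpha + \omega \in z$ collapse respectively to $\alpha_0 + \beta' \in x$, $\alpha' + \beta' \in y$, $\alpha' + \beta_0 \in z$, that is, $(\alpha_0,\beta') \in x$, $(\alpha',\beta') \in y$, $(\alpha',\beta_0) \in z$. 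This is exactly the chain $\alpha_0 \to \beta' \to \alpha' \to \beta_0$ witnessing $(\alpha_0,\beta_0) \in z \circ y^{-1} \circ x$, and the equality $\Gamma(x,a,y,b,z) = z \circ y^{-1} \circ x$ follows by running the argument in both directions.

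For the second claim, suppose in addition that $\alpha + \beta = \beta + \alpha$ for all $\alpha \in a$ and $\beta \in b$. Combined with $a \top b$, this promotes the set bijection $\Omega \cong a \times b$ to a group isomorphism onto the internal direct product, so that a subgroup of $\Omega$ is precisely a subgroup of $a \times b$, i.e.\ a \emph{subgroup relation} between $a$ and $b$. Inverses of subgroup relations are again subgroup relations by symmetry of coordinates, and composites of subgroup relations are subgroup relations by the usual check: if $R$ and $S$ are subgroups of the ambient products, witnesses in the intermediate factor for two pairs in $S \circ R$ can be added and negated together with the outer coordinates, since each of $R,S$ is closed under these operations. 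Applying this twice (first to $y^{-1} \circ x$, then composing on the left with $z$) shows that $z \circ y^{-1} \circ x$ is a subgroup of $a \times b$, hence of $\Omega$, which gives stability of $\Gras(\Omega)$ under the ternary law.

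The only delicate point, in fact the only place where non-commutativity of $\Omega$ might cause trouble, is the change of variables in the first part: one must use that the $a$-variables always appear on the left of $\omega$ and the $b$-variables always on the right in the three structure equations, so that the translations $\alpha_0 \mapsto \alpha + \alpha_0$ and $\beta_0 \mapsto \beta_0 + \beta$ bijectively parametrize $a$ and $b$ without interfering with one another. Once this normalization is in place, both parts are direct translations between the two descriptions of the same set.
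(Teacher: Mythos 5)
Your proof is correct and follows essentially the same route as the paper: identify $\Omega$ with $a\times b$ as a set, and pass between the three structure equations and the chain condition for $z\circ y^{-1}\circ x$ by the change of variables $\alpha'=\alpha+\alpha_0$, $\beta'=\beta_0+\beta$ (the paper runs the same substitution in the opposite direction, starting from the relation side), then use that commuting transversal subgroups make the identification a group isomorphism so that subgroup relations are closed under composition and inversion. No gaps.
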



\begin{proof}
The computation is the same as in \cite{BeKi10}, Lemma 2.1, by respecting the possible 
non-commutativity of $\Omega$.
Recall first  that, if $A,B,C,\ldots$ are any sets,
we can \emph{compose
relations}: for
$x \in \Rel(A, B)$, $y \in \Rel( B ,C)$,
\[
y \circ x := yx :=  \setof{(u,w) \in A \times C}
{\exists v \in B: \, (u,v) \in x, (v,w) \in y}\,.
\]
Composition is associative: both $(z \circ y) \circ x$ and
$z \circ (y \circ x)$ are equal to
\begin{equation}
z \circ y \circ x = \setof{(u,w) \in A \times D}
{\exists (v_1,v_2) \in y: \,
(u,v_1) \in x, (v_2,w) \in z} \,.
\label{assoc}
\end{equation}
The \emph{reverse relation} of $x$ is
$$
x\inv := \setof{(w,v) \in B \times A}{(v,w) \in x}.
$$
For $x,y,z \in \Rel(A, B)$, we get another relation between $A$ and $B$ by
$zy^{-1}x $. 
(This ternary composition satisfies the para-associative law, and hence
relations between sets $A$ and $B$ form
a semitorsor; no structure on the sets $A$ or $B$ is needed here.) 
Coming back to $\Omega=a \times b$, and switching to an additive notation,
we get
\begin{align*}
z\circ y^{-1} \circ x &=
\Bigsetof{\omega = (\alpha',\beta') \in \Omega}
{\begin{array}{c}
\exists \eta=(\alpha'',\beta'') \in y:
 \\
(\alpha',\beta'') \in x, (\alpha'',\beta') \in z
\end{array}}\,
\\
& =
\Bigsetof{\omega  \in \Omega}
{\begin{array}{c}
\exists \alpha',\alpha'' \in a, \exists \beta',\beta'' \in b, \,  \exists
\eta \in y, \exists
\xi \in x, \exists \zeta \in z:
 \\
\omega = (\alpha',\beta'),
\eta = (\alpha'',\beta''),
\xi=(\alpha',\beta''),
\zeta = (\alpha'',\beta')
\end{array}}\,
\\
& =
\Bigsetof{\omega  \in \Omega}
{\begin{array}{c}
\exists \alpha',\alpha'' \in a, \exists \beta',\beta'' \in b, \exists
\eta \in y,
\exists \xi \in x, \exists \zeta \in z:
 \\
\omega = \alpha'  + \beta',
\eta = \alpha''  + \beta'',
\xi=\alpha'   + \beta'',
\zeta = \alpha'' +  \beta'
\end{array}}\, .
\end{align*}
Now use that $a$ and $b$ are transversal {\em subgroups} of $\Omega$:
then the description of $zy^{-1}x$ can
be rewritten, by introducing the new variables
$\alpha:=\alpha' - \alpha''$,
$\beta:=- \beta'' + \beta'$ (which belong again to $a$, resp.\ to $b$, since these are
subgroups)
\begin{align*}
zy^{-1}x & =
\Bigsetof{\omega  \in \Omega}
{\begin{array}{c}
\exists \alpha',\alpha \in a, \exists \beta',\beta \in b, \exists \eta \in y,
\exists \xi \in x, \exists \zeta \in z:
 \\
\omega = \alpha' +  \beta',
\eta = - \alpha + \omega - \beta,
\xi=\omega - \beta,
\zeta =  - \alpha +  \omega
\end{array}}\, .
\end{align*}
Since $a\top b$, the first condition ($\exists
\alpha' \in a, \beta' \in b$: $\omega = \alpha'  + \beta'$)
in the preceding description is always satisfied and can hence
be omitted in the description of $zy\inv x$.
Thus 
$$
zy\inv x =
\Bigsetof{\omega  \in \Omega}
{\begin{array}{c}
\exists \alpha',\alpha \in a, \exists \beta',\beta \in b, \exists \eta \in y,
\exists \xi \in x, \exists \zeta \in z:
 \\
\eta = - \alpha + \omega - \beta,
\xi=\omega - \beta,
\zeta =  - \alpha +  \omega
\end{array}} = \Gamma(x,a,y,b,z) \, .
$$
Finally, if $a$ and $b$ commute, then the bijection $\Omega \cong a \times b$ is also
a group homomorphism. Since subgroups in a direct product of groups 
form a monoid under composition of relations, it follows that $\Gras(\Omega)$ is stable under the ternary composition map.
\end{proof}

Recall that maps give rise to relations via their  {\em graphs}.  In our setting:

\begin{definition}
Assume $(x,y)$  is a left-transversal pair of subsets of $\Omega$: $x \top y$, and let
 $F:x \to y$ be a map. The {\em (left) graph of $F$} is the subset
$$
G_F := \{ \xi + F(\xi) \vert \, \xi \in x \} \subset \Omega,
$$
and if $y \top x$, we define the {\em right graph} of $F:x \to y$ to be
$$
\check G_F:= \{ F(\xi) + \xi \vert \, \xi \in x \} \subset \Omega.
$$
\end{definition}

\begin{lemma}\label{GraphtransversalLemma}
Let $b$ be a subgroup of $\Omega$, and $y$ a subset such that $y \top b$.
Then there are natural bijections between the following sets:
\begin{enumerate}
\item
the set
${ }^\top b$,
\item
the set of sections $\sigma:\Omega/b \to \Omega$ of the canonical projection 
$\pi:\Omega \to \Omega/b$, 
\item
the set $\Map (y,b)$ of maps from $y$ to $b$.
\end{enumerate}
More precisely, the bijection between (1) and (2) is given by
the correspondence between $\sigma$ and the image of $\sigma$, and the one between
(1) and (3) by
$\Map (y,b) \to { }^\top b$, $F \mapsto G_F$.
If, moreover, $y$ is a subgroup, then we have:

\ssk
${ }\, $ (B)${ }$
the map $F$ is bijective iff $y \top G_F$. 
\end{lemma}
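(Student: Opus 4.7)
The plan is to prove the three identifications by unwinding the definition of transversality, and then deduce (B) by a direct computation in the bijection $\Omega \cong y \times b$ coming from $y \top b$.

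\medskip

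\textbf{(1)$\leftrightarrow$(2).} A section $\sigma:\Omega/b\to\Omega$ of $\pi$ is determined by its image, and a subset $z\subset\Omega$ arises as such an image if and only if $z$ meets every coset $\omega + b$ in exactly one point. Thus I would first observe that the condition ``$\omega=\zeta+\beta$ with $\zeta\in z$, $\beta\in b$ exists and is unique for every $\omega\in\Omega$'' is, since $b$ is a subgroup, precisely ``every coset $\omega + b$ contains exactly one element of $z$''. The correspondence $\sigma\mapsto\sigma(\Omega/b)$ is then a bijection between (2) and (1), with inverse sending $z$ to $\pi|_z^{-1}$.

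\medskip

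\textbf{(1)$\leftrightarrow$(3).} Fix the transversal $y\top b$. Given $F:y\to b$, I would first check that $G_F\in {}^\top b$: for $\omega\in\Omega$, use $y\top b$ to write $\omega=\xi+\beta_0$ uniquely, and then $\omega=(\xi+F(\xi))+(-F(\xi)+\beta_0)$ is a decomposition in $G_F+b$; uniqueness follows from re-applying uniqueness of the $y\top b$ decomposition. Conversely, given $z\in{}^\top b$, for each $\xi\in y$ there is a unique $\beta(\xi)\in b$ with $\xi+\beta(\xi)\in z$ (because $z$ meets the coset $\xi+b$ in exactly one element), and setting $F(\xi):=\beta(\xi)$ gives a map $F:y\to b$ with $G_F=z$. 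These constructions are inverse to each other, yielding the bijection $\Map(y,b)\to{}^\top b$, $F\mapsto G_F$.

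\medskip

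\textbf{(B).} Assume now that $y$ is a subgroup. The condition $y\top G_F$ means that the map $\Phi:y\times G_F\to\Omega$, $(\eta,g)\mapsto\eta+g$, is a bijection. Using the bijection $y\to G_F$, $\xi\mapsto\xi+F(\xi)$, I would reparametrize this as $\tilde\Phi:y\times y\to\Omega$, $(\eta,\xi)\mapsto\eta+\xi+F(\xi)$. Next, I would read off $\tilde\Phi$ through the identification $\Omega\cong y\times b$ given by $y\top b$: since $y$ is a subgroup, $\eta+\xi\in y$, and $F(\xi)\in b$, so the $(y,b)$-components of $\tilde\Phi(\eta,\xi)$ are $(\eta+\xi,\,F(\xi))$. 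Hence $\tilde\Phi$ is bijective iff the map $y\times y\to y\times b$, $(\eta,\xi)\mapsto(\eta+\xi,F(\xi))$, is bijective. For fixed $\nu\in b$, the fiber over $(\mu,\nu)$ is in bijection with $F^{-1}(\nu)$ (via $\xi\in F^{-1}(\nu)$ and $\eta=\mu-\xi$, which lies in $y$ because $y$ is a group), and bijectivity therefore amounts to $|F^{-1}(\nu)|=1$ for all $\nu\in b$, i.e.\ $F$ is bijective.

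\medskip

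No step is really hard; the only mild subtlety is keeping track of which transversality is being used where, and exploiting the hypothesis that $y$ is a subgroup precisely at the point where $\eta+\xi$ must be recognized as lying in $y$ in order to apply the uniqueness part of $y\top b$.
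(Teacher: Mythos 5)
Your proposal is correct and follows essentially the same route as the paper: identify $z\top b$ with ``$z$ meets each coset $\omega+b$ exactly once,'' pass between elements of ${}^\top b$, sections, and graphs $G_F$, and for (B) exploit that $y$ is a subgroup so that $\eta+\xi+F(\xi)$ has $(y,b)$-components $(\eta+\xi,F(\xi))$. The only (cosmetic) difference is that your fiber-counting reformulation of $y\top G_F$ delivers both directions of the ``iff'' in one stroke, where the paper writes out the decomposition $\omega=(\eta-\eta')+(\eta'+F(\eta'))$ for one direction and leaves the converse as ``similar.''
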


\begin{proof} 
Consider the equivalence relation given on $\Omega$ by 
$\omega \sim \omega'$ iff $-\omega + \omega' \in b$. 
Then $x \top b$ if, and only if, $x$ is a set of representatives for this equivalence
relation. As for any equivalence relation, it follows therefore that
${ }^\top b$ is in bijection with the set of sections of the canonical projection
$\Omega \to \Omega/\sim$. 
Now let $y = \sigma(\Omega/b)$ and
$x = \sigma'(\Omega/b)$ for two sections $\sigma,\sigma'$.
Then $f:=-\sigma + \sigma'$ is a map $\Omega/b \to b$.
Conversely, given $f:\Omega/b \to b$, $\sigma':=\sigma + f$ is another section,
whose image is precisely the left graph of the map
$F:=f \circ \pi\vert_y:y \to b$. 

For the last statement, assume that $y$ is a subgroup and that
 $F$ is bijective. If $\omega = \eta + \beta$ with $\beta = F(\eta')$, we have the decomposition
$\omega = \eta - \eta' + \eta' + F(\eta')$ with $\eta - \eta' \in y$ and
$\eta' + F(\eta') \in G_F$ which is unique since $F$ is injective. Hence $y \top G_F$.
The converse is proved similarly.
\end{proof}

In a similar way, if $b  \top y$, every element of $b^\top$ is of the form $\check G_F$ with a unique
map $F:y \to b$.

\begin{theorem}\label{BijectionTorsorTheorem}
Let  $(a,b)$ be a pair of left-transversal subgroups of a group $\Omega$.
Then $a^\top \cap { }^\top b$ is a subsemitorsor of $\cP_{ab}$, and it is actually a torsor,
denoted by $U_{ab}$ and
naturally isomorphic to the torsor of bijections $F: a \to b$ with 
its usual torsor law
$$
(XYZ) = Z \circ Y\inv \circ X .
$$
In other words, if one fixes a bijection $Y:a\to b$ in order to identify $a$ and $b$, then
$U_{ab}$ is the torsor corresponding to the group of all bijections of $a$. \footnote{To be
precise, we get the opposite of the ``usual'' composition. This convention, used already
in \cite{BeKi10}, is in keeping with certain formulas from Jordan theory. 
}
\end{theorem}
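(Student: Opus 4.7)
The plan is to use left-transversality to reduce the statement to a statement about relations and maps between the sets $a$ and $b$, then conclude by invoking two prior results.

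First I would invoke Theorem~\ref{CompositionTheorem}: because $a\top b$, under the bijection $\Omega \cong a\times b$, $(\alpha,\beta)\mapsto \alpha+\beta$, the ternary operation $\Gamma(x,a,y,b,z)$ coincides with the composition of relations $z\circ y^{-1}\circ x$. So on $\cP = \Rel(a,b)$ the law $(xyz)_{ab}$ is the standard para-associative ternary law on relations. Next I would apply Lemma~\ref{GraphtransversalLemma} with $y:=a$ (itself a subgroup transversal to $b$): the map $F\mapsto G_F$ gives a bijection $\Map(a,b)\to{}^\top b$, and by part (B) of that lemma it restricts to a bijection $\Bij(a,b)\to a^\top\cap{}^\top b$.

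The heart of the proof is then to check that the above two identifications intertwine the two ternary laws, i.e.\ that for $X,Y,Z\in\Bij(a,b)$
$$
\Gamma(G_X,a,G_Y,b,G_Z) \;=\; G_{Z\circ Y^{-1}\circ X}.
$$
Under $\Omega\cong a\times b$ the left graph $G_F$ becomes the set-theoretic graph $\{(\alpha,F(\alpha))\mid\alpha\in a\}$, and it is a routine elementary fact that composition of relations sends graphs of maps to graphs of composites, and that the reverse relation of $G_F$ is the graph of $F^{-1}$ when $F$ is bijective. Combined with the formula of Theorem~\ref{CompositionTheorem}, this yields the displayed identity. In particular, the right-hand side is again the graph of a bijection, which proves that $a^\top\cap{}^\top b$ is stable under $\Gamma$, hence is a subsemitorsor of $\cP_{ab}$.

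It then remains to verify the idempotent law (T2) and the isomorphism claim; both follow at once from the identity above, since on $\Bij(a,b)$ the operation $(X,Y,Z)\mapsto Z\circ Y^{-1}\circ X$ is the standard torsor law attached to the group $\Bij(a)$ (after fixing any bijection $Y$ to identify $a$ with $b$). The only potential obstacle is the careful bookkeeping of inversion and direction of composition: one must take care that the ``left graph'' convention $G_F=\{\xi+F(\xi)\}$ corresponds, under the transversality identification, exactly to the set-theoretic graph $\{(\xi,F(\xi))\}$, and that the middle factor appears inverted precisely because the definition of $\Gamma$ treats $y$ via the condition $\alpha+\omega+\beta\in y$ rather than $\omega\in y$. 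Once these conventions are matched, the argument reduces to the elementary statement that graphs compose as maps, and nothing further is needed.
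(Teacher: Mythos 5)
Your proposal is correct and follows essentially the same route as the paper: the paper's own proof likewise combines Lemma \ref{GraphtransversalLemma} (identifying $a^\top \cap { }^\top b$ with left graphs of bijections $F:a\to b$) with Theorem \ref{CompositionTheorem} (identifying $\Gamma(x,a,y,b,z)$ with $z\circ y\inv\circ x$), and concludes because composition of maps corresponds to composition of their graphs. Your additional care about the left-graph convention and the direction of composition is exactly the bookkeeping the paper delegates to its footnote.
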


\begin{proof}
By the lemma, 
a relation $r \in \cP$ belongs to $a^\top \cap { }^\top b$ if, and only if, it is the left graph
of a bijection $F:a \to b$. Since composition of maps corresponds precisely to the
composition of their graphs, the claim now follows from Theorem 
\ref{CompositionTheorem}.
\end{proof} 

\begin{definition}
A {\em transversal triple of subgroups} is a triple of subgroups $(a,b,c)$ of $\Omega$
such that $a$ and $b$ commute, and $a \top b$, $b \top c$ and $c \top a$.
\end{definition}

\begin{theorem}\label{TripleTheorem}
Let $(a,b,c)$ be a triple of subgroups such that $a \top b$ and $a,b$ commute. 
Then $(a,b,c)$ is a  transversal triple if and only if
$\Omega \cong a \times a$, with $a$ the first, $b$ the second factor and $c$ the
diagonal. The subset
$U_{ab}':=U_{ab} \cap \Gras(\Omega)$ of $U_{ab}$ is a torsor with base point $c$, 
isomorphic to the  group $\Aut(a)$ of group automorphisms of $a$.
\end{theorem}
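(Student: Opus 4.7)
The plan is to combine Lemma \ref{GraphtransversalLemma}, Theorem \ref{CompositionTheorem}, and Theorem \ref{BijectionTorsorTheorem} with a direct analysis of when a graph $G_F$ is itself a subgroup. The starting point is the observation that, since $a$ and $b$ are commuting transversal subgroups, the internal sum map $a \times b \to \Omega$, $(\alpha,\beta) \mapsto \alpha+\beta$, is a group isomorphism; and that for subgroups, left and right transversality are equivalent (pass from $\omega = \xi+\eta$ to $-\omega = -\eta + (-\xi)$). Hence the triple conditions $b \top c$ and $c \top a$ are equivalent to $c \top b$ and $a \top c$, i.e., to $c \in a^\top \cap { }^\top b = U_{ab}$.

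For the forward direction of the equivalence, assume $(a,b,c)$ is a transversal triple. By Lemma \ref{GraphtransversalLemma}, $c = G_F$ for a unique bijection $F: a \to b$. The requirement that $c$ be a subgroup then translates, using commutativity of $a$ and $b$, to $F(o)=o$ and $F(\alpha+\alpha')=F(\alpha)+F(\alpha')$; hence $F$ is a group isomorphism. Using $F$ to identify $b$ with $a$, we obtain $\Omega \cong a \times a$ under which $b$ is the second factor and $c$ corresponds to the diagonal. The converse is a direct verification: if $c$ is the diagonal of $a \times a = a \times b$, then $c = G_{\id_a}$ is the graph of the identity isomorphism, and the required transversalities follow from the direct product structure.

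For the torsor claim, the subset $U_{ab}' = U_{ab} \cap \Gras(\Omega)$ consists precisely of those $G_F \in U_{ab}$ which are subgroups, i.e., graphs of group isomorphisms $F: a \to b$. By Theorem \ref{CompositionTheorem}, $\Gras(\Omega)$ is stable under the ternary composition $\Gamma(\cdot,a,\cdot,b,\cdot)$ (this is where we use that $a$ and $b$ commute), so $U_{ab}'$ is a subtorsor of $U_{ab}$. Under the identification of $b$ with $a$ induced by $c$, the base point $c$ becomes $\id_a$, and the resulting group law $X \cdot Z := (XcZ)_{ab} = Z \circ c\inv \circ X = Z \circ X$ on group automorphisms of $a$ is the opposite of ordinary composition, yielding a group isomorphic to $\Aut(a)$. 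The only real subtlety is to track that the phrase \emph{$c$ is the diagonal} implicitly presupposes an identification $b \cong a$, and that the data of this identification is precisely what $c$ itself supplies.
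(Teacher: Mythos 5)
Your proposal is correct and follows essentially the same route as the paper's (much terser) proof: identify $\Omega$ with the direct product $a\times b$, recognize $c$ as the graph of a bijection $F$ which is a subgroup iff $F$ is a group morphism, and then invoke Theorems \ref{CompositionTheorem} and \ref{BijectionTorsorTheorem} for the torsor claim. The details you add --- the equivalence of left and right transversality for subgroups, and the explicit identification of the base-point group law with (opposite) composition in $\Aut(a)$ --- are exactly the steps the paper leaves implicit.
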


\begin{proof}
The assumption implies that $\Omega \cong a \times b$ as a group. Let $c \in U_{ab}$ be the
graph of the bijective map $F:b \to a$.
Then $c\subset \Omega$
 is a subgroup if and only if $F$ is a group morphism, and the claim follows
from the preceding results.
\end{proof}

\section{The singular case : pointwise torsors}


Now we turn to the ``singular'' case $a=b$. In this case, $\Gamma$ has to be replaced by
$\Sigma$, and torsors of bijections by ``pointwise torsors'':

\begin{theorem}\label{PointwisetorsorTheorem}
Assume $b$ is a subgroup of $\Omega$, and let $y \in \cP$ such that $y \top b$. 
Then there are natural isomorphisms between the following torsors:
\begin{enumerate}
\item
the set
${ }^\top b$,
which is a subsemitorsor of $\cP_b$,  and which becomes a torsor, denoted by $U_b$,
 with the induced law, 
\item
the torsor
 of all (images) of sections $\sigma:\Omega/b \to \Omega$ of the canonical projection 
$\Omega \to \Omega/b$ with  pointwise torsor structure
$(\sigma \sigma' \sigma'') (u) = \sigma(u) - \sigma'(u)+\sigma''(u)$,
\item
the torsor $\Map (y,b)$ of maps from $y$ to $b$, with its pointwise torsor structure.
\end{enumerate}
Similar statements hold for $\check U_b=b^\top$, which can be identified with sections of the projection
$\Omega \to b \backslash \Omega$, together with pointwise torsor structure.
\end{theorem}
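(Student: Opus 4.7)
The plan is to leverage Lemma~\ref{GraphtransversalLemma}, which already establishes the set-theoretic bijections $(1)\leftrightarrow(2)\leftrightarrow(3)$: each $x\in{}^\top b$ is the image of a unique section $\sigma_x$ of $\pi:\Omega\to\Omega/b$, and sections correspond bijectively to maps $F:y\to b$ via $\sigma(\eta+b)=\eta+F(\eta)$. What remains is to show that the ternary composition $(x_1x_2x_3)_b=\Sigma(b,x_1,x_2,x_3)$ restricts to ${}^\top b$ and, transported along these bijections, becomes the pointwise torsor law; the torsor axioms will then be inherited for free from the group $b$.

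I would begin by fixing $x_1,x_2,x_3\in{}^\top b$ and unwinding the definition of $\Sigma(b,x_1,x_2,x_3)$: an element $\omega\in\Omega$ lies in this set iff there exist $\xi\in x_1$, $\eta\in x_2$, $\zeta\in x_3$ and $\beta,\beta'\in b$ with
\[
\xi=\omega+\beta,\quad \eta=\omega+\beta'+\beta,\quad \zeta=\omega+\beta'.
\]
The first and third equations force $\beta=-\omega+\xi$ and $\beta'=-\omega+\zeta$, which land in $b$ precisely when $\xi,\zeta,\omega$ all lie in a common right coset $C$ of $b$. For such a $C$, the elements $\xi,\eta,\zeta$ are uniquely determined as $\sigma_1(C),\sigma_2(C),\sigma_3(C)$, and the middle equation becomes $\eta=\zeta-\omega+\xi$, which I would solve for $\omega$ to obtain
\[
\omega=\xi-\eta+\zeta=\sigma_1(C)-\sigma_2(C)+\sigma_3(C).
\]
A quick check confirms this $\omega$ indeed lies in $C$, since $-\sigma_1(C)+\omega=-\sigma_2(C)+\sigma_3(C)\in b$.

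Thus every right coset contributes exactly one element to $\Sigma(b,x_1,x_2,x_3)$, proving simultaneously that ${}^\top b$ is a subsemitorsor of $\cP_b$ and that the induced law on sections is the pointwise combination $(\sigma_1\sigma_2\sigma_3)=\sigma_1-\sigma_2+\sigma_3$. Para-associativity and the idempotent law (T2) are then inherited from the group structure on $b$ applied pointwise---every group $(G,+)$ is a torsor under $(xyz):=x-y+z$---so $U_b$ is a torsor, not merely a semitorsor, and the bijection in (2) is a torsor isomorphism. Transporting through $\sigma(\eta+b)=\eta+F(\eta)$, a brief computation yields
\[
\sigma_1(C)-\sigma_2(C)+\sigma_3(C)=\eta_C+\bigl(F_1(\eta_C)-F_2(\eta_C)+F_3(\eta_C)\bigr),
\]
so the induced law on $\Map(y,b)$ is again the pointwise one.

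The statement for $\check U_b=b^\top$ follows by running the same argument for the opposite group law on $\Omega$: $\check\Sigma$ in place of $\Sigma$, left cosets $b\backslash\Omega$ in place of right cosets, and right graphs $F(\eta)+\eta$ in place of left graphs. The main obstacle is purely notational: since $\Omega$ and $b$ need not be commutative, one must carefully place each ``sign'' on the correct side when manipulating the structure equations, and verifying that the pointwise expression $\sigma_1(C)-\sigma_2(C)+\sigma_3(C)$ actually returns a value in the coset $C$ is the one place where closure of $b$ under inversion (i.e., $b$ being a subgroup, not merely a subset) is essential.
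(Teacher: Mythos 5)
Your proof is correct and takes essentially the same route as the paper: both use Lemma \ref{GraphtransversalLemma} for the set-level bijections, both eliminate $\beta,\beta'$ from the defining equations of $\Sigma$ to reduce the law to $\omega=\xi-\eta+\zeta$ with $\xi,\eta,\zeta$ in a common coset of $b$ (the paper's formula (\ref{NonbalancedEquation})), and both then inherit the torsor axioms from the pointwise structure on sections. The only quibble is terminological: the cosets $\omega+b$ parametrized by $\Omega/b$ are conventionally called left cosets, and those of $b\backslash\Omega$ right cosets, so your labels are swapped.
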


\begin{proof}
On the level of sets, these bijections  have been established in Lemma 
\ref{GraphtransversalLemma}.
It is  immediately checked that 
the set of sections of the projection is stable under the pointwise torsor structure
$(\sigma \sigma' \sigma'') (u) = \sigma(u) - \sigma'(u)+\sigma''(u)$ (as well as
under its opposite torsor structure), and
it is clear, then, that the bijection between (2) and (3) becomes an
isomorphism of torsors with pointwise torsor structures. 

\ssk
In order to show that these torsor structures agree with the law described in Theorem
 \ref{SemitorsorTheorem}, note that,
by a change of variables, the unbalanced semitorsor law can also be written
\begin{align}\label{NonbalancedEquation}
(xyz)_b &=
\Bigsetof{\omega \in \Omega}
{\begin{array}{c}
 \exists \xi \in x,\exists \eta \in y,\exists \zeta \in z,\exists \beta,\beta' \in b: \\
\omega = \xi - \eta + \zeta , \quad 
\zeta = \eta +\beta, \quad   \xi = \eta + \beta' 
\end{array}} \, .
\end{align}
Given three sections $\sigma,\sigma',\sigma''$, 
we let  $\xi = \sigma(u)$, $\eta = \sigma'(u)$, $\zeta = \sigma''(u)$ so that
$\omega := (\sigma \sigma' \sigma'') (u) = \sigma(u) - \sigma'(u)+\sigma''(u) =
\xi - \eta + \zeta$, which is  the first condition in (\ref{NonbalancedEquation}).
The other two conditions just say that
$\xi$, $\eta$ and $\zeta$ belong to the same coset $\eta + b$.
Thus the ternary structures from (1), (2) and (3) agree; since (2) and (3) are torsors,
the semitorsor law from (1) actually defines a torsor structure on ${ }^\top b$.

\ssk
The same arguments apply  to sections of $\Omega \to b\backslash \Omega$, defining the
two torsor structures on $b^\top$.
\end{proof}

\section{An operator calculus on groups}\label{Sec:Operator}

In this section we generalize the various
``projection operators'' used in \cite{BeKi10} to the case of general groups. 
If $\Omega$ is abelian, then all operators are $\Z$-linear, or
$\Z$-affine, maps; in general, however, they will {\em not} be endomorphisms
of $\Omega$. In the following, all sums $F+G$ and differences 
$F-G$ of maps $F,G:\Omega \to \Omega$ are pointwise sums, resp.\ differences, and
hence one has to respect orders in such expressions.

\begin{definition}\label{ProjectionDefinition}
Assume $a$ and $x$ are  subsets of a group $\Omega$ such that $a \top x$.
The {\em left}, resp.\ {\em right projection operators} are defined by
$$
P^a_x:\Omega \to \Omega, \quad \omega = \alpha + \xi  \mapsto \xi 
$$
$$
\check P^x_a:\Omega \to \Omega, \quad \omega =  \alpha + \xi \mapsto \alpha 
$$
where $\alpha \in a$, $\xi \in x$.
\end{definition}

\begin{remark}
By dropping the transversality conditions, some results of this section still hold
if one replaces projectors by {\em generalized projectors}, i.e., by relations
$P^x_a \subset \Omega^2$ of the form
$\{ (\eta,\omega)\in \Omega^2 \mid \exists \alpha \in a, \xi \in x:
\eta = \alpha + \xi, \omega = \xi \}$, for arbitrary $x,a \in \cP$
(see \cite{BeKi10b} for the case of abelian $\Omega$.)
This will be taken up elsewhere.
\end{remark}

\begin{lemma}\label{ProjectionLemma}
Let $a,b,x,y \in \cP$ such that $a \top x,y$ and $a,b \top x$. Then
\begin{enumerate}[label=\roman*\emph{)},leftmargin=*]
\item
$ \check P^x_a + P^a_x = \id_\Omega$, that is, $\check P^x_a = \id_\Omega - P^a_x$ and
$P^a_x=-\check P^x_a + \id_\Omega$, 
\item
$P^a_x \circ P^b_x = P^b_x$; in particular,
$P^a_x$ is idempotent: $(P^a_x)^2 = P^a_x$,
\item
if $a$ is a subgroup, then $P^a_x \circ P^a_y = P^a_x$.
\end{enumerate}
\end{lemma}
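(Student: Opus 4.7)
The plan is to exploit the uniqueness of the transversal decompositions directly; no heavy machinery is required. Throughout I will use the convention (implicit in the section and consistent with the notation $\cP^o$) that the subsets containing $o$ which appear as ``first factors'' in projections do contain $o$; in particular, $o$ belongs to the relevant sets $a$, $b$ so that for any $\xi \in x$ the trivial decomposition $\xi = o + \xi$ is available.

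For part (i) I would simply unfold the definitions. Fix $\omega \in \Omega$ and use $a \top x$ to write $\omega = \alpha + \xi$ uniquely with $\alpha \in a$, $\xi \in x$. Then by definition $\check P^x_a(\omega) = \alpha$ and $P^a_x(\omega) = \xi$, so the pointwise sum satisfies $(\check P^x_a + P^a_x)(\omega) = \alpha + \xi = \omega$. The two stated rearrangements are then immediate (note, since the group is nonabelian, the order $\check P^x_a + P^a_x$ matters and matches the order $\alpha + \xi$ of the decomposition).

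For part (ii), given $\omega \in \Omega$ I would use $b \top x$ to write $\omega = \beta + \xi$ with $\beta \in b$, $\xi \in x$, so $P^b_x(\omega) = \xi$. To evaluate $P^a_x(\xi)$ I invoke $a \top x$: the decomposition $\xi = o + \xi$ with $o \in a$, $\xi \in x$ is valid, and by uniqueness it is the only $(a,x)$-decomposition of $\xi$. Hence $P^a_x(\xi) = \xi$, giving $P^a_x \circ P^b_x = P^b_x$. Taking $b = a$ yields idempotency.

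Part (iii) is the step that genuinely uses the subgroup hypothesis on $a$, and it is the only step where a nontrivial computation happens. Given $\omega \in \Omega$, use $a \top y$ to write $\omega = \alpha + \eta$ with $\alpha \in a$, $\eta \in y$; then $P^a_y(\omega) = \eta$. Next use $a \top x$ to write $\eta = \alpha' + \xi$ with $\alpha' \in a$, $\xi \in x$; then $P^a_x(\eta) = \xi$. Substituting gives $\omega = \alpha + \alpha' + \xi$, and here I use that $a$ is a \emph{subgroup}: the element $\alpha + \alpha'$ lies in $a$. Thus $\omega = (\alpha + \alpha') + \xi$ is an $(a,x)$-decomposition of $\omega$, and by uniqueness it must coincide with the one defining $P^a_x(\omega)$, so $P^a_x(\omega) = \xi = P^a_x(P^a_y(\omega))$. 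The only potential obstacle is the subgroup hypothesis on $a$: without it, $\alpha + \alpha'$ need not lie in $a$ and the argument breaks, which is why the statement requires it.
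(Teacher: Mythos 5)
Your proof is correct and takes essentially the same route as the paper's: parts (i) and (iii) are handled by unfolding the uniqueness of the transversal decompositions, with the subgroup hypothesis entering in (iii) exactly where you put it (the paper decomposes $\omega = \alpha + \xi = \alpha' + \eta$ and writes $\eta = -\alpha' + \alpha + \xi$, which is your computation read in the opposite direction). One remark: the paper dismisses (ii) as ``obvious,'' whereas you correctly note that it needs $o \in a$ so that $\xi = o + \xi$ is the unique $(a,x)$-decomposition of $\xi \in x$; this normalization is genuinely required (for a translated transversal such as $a = \{5,6\}$, $x = 2\Z$ in $\Omega = \Z$ the identity $P^a_x \circ P^b_x = P^b_x$ fails), so your explicit convention is a point the paper leaves implicit rather than a flaw in your argument.
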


\begin{proof}
i)
If $\omega = \alpha + \xi$ with $\alpha \in a$, $\xi \in x$, 
then $\xi = P^a_x(\omega)$ and $\alpha = \check P^x_a(\omega)$, whence the claim.
(Note: in general, $P^a_x + \check P^x_a$ will be different from the identity map!)

\ssk
ii) is obvious, and iii) is proved by decomposing
$\omega = \alpha + \xi = \alpha' + \eta$; then
$P^a_x \circ P^a_y (\omega) = P^a_x(\eta) = \xi = P^a_x(\omega)$
since $\eta = -\alpha' + \alpha + \xi$.
\end{proof}
 
\subsection{The ``unbalanced operators''}

\begin{definition}\label{TransvectionDefinition}
Let $a,b,x,y,z \in \cP$ such that  $x,y\top b$ and $a \top x,y$.
We define two types of {\em transvection operators (from $y$ to $x$, along $b$, reps.\ $a$)} by
\begin{align*}
T^b_{x,y} &:=  \id  - P^x_b + P^y_b  = \check P^b_x + P^y_b = \check P^b_x - \check P^b_y + \id 
\cr
\check T^a_{x,y} &:= \check P^y_a - \check P^x_a + \id   = \check P^y_a + P^a_x  = 
\id - P_y^a + P^a_x   \, .
\end{align*}
\end{definition}

\nin
Notation is chosen such that, for any operator $A:\Omega \to \Omega$ which can be written
as a ``word''  (iterated sum)
 in  projection operators, $\check A$ denotes the corresponding operator obtained
by replacing $\Omega$ by $\Omega^{opp}$.
 
\begin{lemma}\label{OperatorLemma1}
Assume $a,b$ are subgroups, $x,y \top b$ and $a \top x,y$. Then,
for all $z \in \cP$,
\begin{eqnarray*}
\Sigma(b,x,y,z) &=& T^b_{xy}(z) ,
\cr
\check \Sigma(a,x,y,z) &=& \check T^a_{xy}(z) .
\end{eqnarray*}
In particular, it follows that, if also $u \top b$,
$$
T^b_{x,x}=\id_\Omega, \qquad
T^b_{x,u}\circ T^b_{u,y}=T^b_{x,y}.
$$
Thus $\{ T^b_{x,y} \mid \, x,y \in { }^\top b \}$ is a group, isomorphic to
${ }^\top b$ as a torsor.
Similar remarks apply 
to $\check T^b_{x,y}$ with respect to  $b^\top$.
\end{lemma}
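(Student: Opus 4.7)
The plan is to reduce the first identity to the reformulation \eqref{NonbalancedEquation} of $\Sigma$ recorded in the proof of Theorem~\ref{PointwisetorsorTheorem}: $\omega \in \Sigma(b,x,y,z)$ iff there exist $\xi \in x$, $\eta \in y$, $\zeta \in z$ and $\beta,\beta' \in b$ with $\omega = \xi - \eta + \zeta$, $\zeta = \eta + \beta$, and $\xi = \eta + \beta'$. The hypotheses $y \top b$ and $x \top b$ mean that once $\zeta$ is fixed the decompositions $\zeta = \eta + \beta$ and $\zeta = \xi + \beta''$ are unique, forcing $\eta = \check P^b_y(\zeta)$ and $\xi = \check P^b_x(\zeta)$ (both are the unique representatives in $y$, resp.\ $x$, of the right coset $\zeta + b$); the condition $\xi = \eta + \beta'$ is then automatic since $\xi - \eta \in b$. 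Substituting yields $\omega = \check P^b_x(\zeta) - \check P^b_y(\zeta) + \zeta = T^b_{x,y}(\zeta)$, proving $\Sigma(b,x,y,z) = T^b_{x,y}(z)$. The analogous identity for $\check \Sigma$ drops out by running the same argument in the opposite group $\Omega^{\mathrm{opp}}$.

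The identity $T^b_{x,x} = \id_\Omega$ is immediate from the formula $\check P^b_x - \check P^b_x + \id$. The main calculation is the composition law $T^b_{x,u} \circ T^b_{u,y} = T^b_{x,y}$, and its key point is that every transvection preserves right $b$-cosets: writing $\zeta = \check P^b_y(\zeta) + \beta$ with $\beta \in b$ one computes
\[
T^b_{u,y}(\zeta) \;=\; \check P^b_u(\zeta) - \check P^b_y(\zeta) + \zeta \;=\; \check P^b_u(\zeta) + \beta,
\]
so $\omega_1 := T^b_{u,y}(\zeta)$ lies in the same right coset of $b$ as $\zeta$. Since $\check P^b_x$ and $\check P^b_u$ depend only on this coset, $\check P^b_x(\omega_1) = \check P^b_x(\zeta)$ and $\check P^b_u(\omega_1) = \check P^b_u(\zeta)$, whence
\[
T^b_{x,u}(\omega_1) \;=\; \check P^b_x(\zeta) - \check P^b_u(\zeta) + \check P^b_u(\zeta) + \beta \;=\; \check P^b_x(\zeta) + \beta \;=\; T^b_{x,y}(\zeta).
\]
This coset-preservation observation is the one nontrivial step; everything else in the calculation is cancellation.

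With these identities, closure under composition and the inverse relation $T^b_{y,x} = (T^b_{x,y})^{-1}$ are immediate, so $\{T^b_{x,y} \mid x, y \in {}^\top b\}$ is a group. For any fixed $y_0 \in {}^\top b$ the map $x \mapsto T^b_{x,y_0}$ is a bijection of ${}^\top b$ onto this group, because $T^b_{x_1,y_0} = T^b_{x_2,y_0}$ forces $\check P^b_{x_1} = \check P^b_{x_2}$, whose images are $x_1$ and $x_2$ respectively. Combined with $T^b_{x,y}(z) = (xyz)_b$, this transports the torsor structure on ${}^\top b$ furnished by Theorem~\ref{PointwisetorsorTheorem} to the torsor structure naturally carried by the transvection group, delivering the asserted torsor isomorphism.
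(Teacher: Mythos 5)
Your proposal is correct. The first identity is proved essentially as in the paper: both arguments reduce $\Sigma(b,x,y,z)$ to the reformulation \eqref{NonbalancedEquation} and use the transversalities $x,y\top b$ to identify the existentially quantified $\xi,\eta$ with the projections $\check P^b_x(\zeta),\check P^b_y(\zeta)$. Where you genuinely diverge is the composition law $T^b_{x,u}\circ T^b_{u,y}=T^b_{x,y}$: the paper gets it for free from results already on the shelf, namely para-associativity of $(xyz)_b$ (Theorem~\ref{SemitorsorTheorem}) and idempotency on ${}^\top b$ (Theorem~\ref{PointwisetorsorTheorem}), which give $T^b_{x,u}(T^b_{u,y}(z))=\bigl(x,u,(u,y,z)_b\bigr)_b=\bigl((x,u,u)_b,y,z\bigr)_b=T^b_{x,y}(z)$, and then evaluates on singletons $z=\{\zeta\}$ to pass from a set identity to an operator identity. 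You instead prove the operator identity directly, via the observation that $T^b_{u,y}$ preserves the coset $\zeta+b$ and that $\check P^b_x$ depends only on that coset. Your route is self-contained (it does not lean on the semitorsor machinery of Section~\ref{Sec:Semitorsor}) and makes the geometric content explicit -- transvections act fiberwise over $\Omega/b$ -- at the cost of redoing a computation the paper has already amortized; the paper's route is shorter but hides that picture. Two cosmetic points: in a noncommutative $\Omega$ the condition $\xi=\eta+\beta'$ is equivalent to $-\eta+\xi\in b$ rather than $\xi-\eta\in b$ (your argument does in fact produce the former, since $\xi+b=\zeta+b=\eta+b$), and the cosets $\zeta+b$ are left cosets in the usual convention; neither affects the validity of the argument.
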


\begin{proof}
$(\check P^b_x - \check P^b_y + \id )(z)$ is the set of $\omega \in \Omega$ that can be
written $\omega = \xi - \eta +\zeta$ with $\zeta \in z$, $\xi = P^b_x(\zeta)$, $\eta = P^b_y(\zeta)$.
The last two conditions mean that there are $\beta,\beta' \in b$ with
$\zeta = \xi + \beta$ and $\zeta = \eta + \beta'$. 
Since $b$ is a subgroup, we see that $\omega$ satisfies precisely the
three conditions from (\ref{NonbalancedEquation}), whence the first claim. From 
the torsor property, we get now
$T^b_{x,u}(T^b_{u,y} (z)) = T^b_{x,y}(z)$.
Since this holds in particular for singletons $z=\{ \zeta \}$, the identity 
$T^b_{x,u}\circ T^b_{u,y}=T^b_{x,y}$ for operators on $\Omega$ follows.
The remaining claims are clear.
\end{proof}

\subsection{The ``balanced operators''} 
 
\begin{definition}\label{TranslationDefinition}
Let $a,b,x,y,z \in \cP$. If  
 $a \top x$ and $z \top b$, we define the {\em middle multiplication operators} by
\begin{eqnarray*}
M_{xabz}&: =&  P^a_x - \id + \check P^b_z  \cr
&=& P^a_x - P^z_b \cr
&=&  -\check P^x_a + \id - P^z_b  \cr
&=& - \check P^x_a + \check P^b_z  .
\end{eqnarray*}
If $a \top x$ and $y \top b$, we define the {\em left multiplication operators} by
\begin{eqnarray*}
L_{xayb} &:=& - \check P_a^x \circ \check P^b_y + \id ,
\end{eqnarray*}
and if $(-a) \top y$ and $z \top b$, we define the 
{\em right multiplication operators} by
\begin{eqnarray*}
R_{aybz}  &:=&  \id - P^z_b \circ P^{-a}_y \, .
\end{eqnarray*}
\end{definition}

\begin{lemma}\label{OperatorLemma}
Let $a,b,x,y,z \in \cP$.
\begin{enumerate}[label=\roman*\emph{)},leftmargin=*]
\item If 
 $a \top x$ and $z \top b$, then 
$\Gamma(x,a,y,b,z)  =  M_{xabz} (y)$.
\item If  $a \top x$ and $y \top b$, then 
$\Gamma(x,a,y,b,z)  =  L_{xayb} (z) $.
\item
If  $z \top b$ and $(-a) \top y$, then 
$\Gamma(x,a,y,b,z)  =  R_{aybz}(x) $.
\end{enumerate}
\end{lemma}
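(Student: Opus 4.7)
The plan is to verify each of the three identities by unfolding the definition of the relevant projection operator and comparing the outcome with one of the equivalent rewritings of the structure equations collected in Lemma \ref{SymmetryLemma}. The key point is that those rewritings express $\omega$ as an explicit formula in three of the five other variables $\xi,\alpha,\eta,\beta,\zeta$, and each of the operators $M_{xabz}$, $L_{xayb}$, $R_{aybz}$ is designed to reproduce such a formula by composing two projections whose domains are exactly what the stated transversality hypotheses provide.

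For part (i), given $\eta \in y$, the hypothesis $a \top x$ gives the unique decomposition $\eta = \alpha + \xi$ with $\alpha \in a$, $\xi \in x$, so $P^a_x(\eta) = \xi$; the hypothesis $z \top b$ gives $\eta = \zeta + \beta$ with $\zeta \in z$, $\beta \in b$, so $\check P^b_z(\eta) = \zeta$. Therefore $M_{xabz}(\eta) = \xi - \eta + \zeta$, which is exactly the formula for $\omega$ appearing in \eqref{2.3}. A three-line verification using $\eta = \alpha + \xi = \zeta + \beta$ then confirms that $(\alpha, \beta, \xi, \eta, \zeta)$ together with this $\omega$ satisfies the structure equations, yielding $M_{xabz}(y) \subseteq \Gamma(x,a,y,b,z)$. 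Conversely, any witness quintuple $(\alpha, \beta, \xi, \eta, \zeta)$ for $\omega \in \Gamma(x,a,y,b,z)$ must satisfy $\eta = \alpha + \xi = \zeta + \beta$, so uniqueness of the two transversal decompositions forces $\xi = P^a_x(\eta)$ and $\zeta = \check P^b_z(\eta)$, whence $\omega = \xi - \eta + \zeta = M_{xabz}(\eta)$.

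Parts (ii) and (iii) are handled in the same manner, but now reading the structure equations in the forms given by \eqref{EquationS3}. Given $\zeta \in z$, the decomposition $\zeta = \eta + \beta$ (by $y \top b$) followed by $\eta = \alpha + \xi$ (by $a \top x$) produces $L_{xayb}(\zeta) = -\alpha + \zeta$, matching $\omega = -\alpha + \zeta$. Given $\xi \in x$, the decomposition $\xi = -\alpha + \eta$ (by $(-a) \top y$, with $\alpha \in a$) followed by $\eta = \zeta + \beta$ (by $z \top b$) produces $R_{aybz}(\xi) = \xi - \beta$, matching $\omega = \xi - \beta$. In each case one verifies the three structure equations by direct substitution, and the reverse inclusion again rests on uniqueness of the transversal decompositions.

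The main obstacle is careful sign bookkeeping in the non-commutative group $\Omega$: in (ii), the $b$-parameter $\beta$ produced by the decomposition $\zeta = \eta + \beta$ and the $b$-parameter appearing in the structure equation $\eta = \alpha + \omega + \beta$ differ by a sign, so one must take $\beta^* := -\beta$ as the witness, tacitly using that $b$ is closed under inversion. The analogous issue for $a$ in (iii) is elegantly avoided by the $(-a) \top y$ formulation. In all applications of the lemma, $a$ and $b$ are subgroups, so the sign issue is automatic; modulo this, each verification is a routine computation using only associativity.
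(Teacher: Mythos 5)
Your proof is correct and follows essentially the same route as the paper's: unfold each operator as a composite of projections and match the resulting conditions on $\omega$ against the rewritten forms of the structure equations from Lemma \ref{SymmetryLemma}. Your remark about the sign of the $b$-witness in part (ii) is a genuine point of care that the paper's own proof (which writes $\zeta = \eta - \beta$ and even notes that no group hypothesis on $a$ or $b$ is needed) passes over silently; strictly speaking that step uses that $b$ is a symmetric subset, which is harmless in all applications since $b$ is there a subgroup.
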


\begin{proof} 
i) 
$(P^a_x - \id + \check P^b_z) (y)$ is the set of all $\omega \in \Omega$ that can be written in the form
$\omega = \xi - \eta + \zeta$ with $\xi = P^\ga_x(\eta)$ and $\zeta = \check P^b_z(\eta)$ for some
$\eta \in y$. This means, in turn, that there is $\alpha \in a$ and $\zeta \in z$ such that
$\eta = \alpha + \xi$ and $\eta = \zeta + \beta$.
Summing up, $\omega$ satisfies exactly the three conditions from (\ref{2.3}), hence
they describe $\Gamma(x,a,y,b,z)$, whence the equality of both sets.

ii) 
$(-\check P^x_a \circ \check P^b_y + \id)(z)$ is the set of all $\omega \in \Omega$ that can
be written as
$\omega = - \alpha + \zeta$ with $ \alpha = P^x_a \circ \check P^b_y (\zeta)$ and 
$\zeta \in z$. This means that there is $\xi \in x$ such that
$\eta  = \alpha + \xi$ for
$\eta = \check P^b_y(\zeta)$.
And this means that there is $\eta \in y$ and $\beta \in b$ such that
$\zeta = \eta - \beta$. Thus we end up with the three conditions
$$
\omega = - \alpha +\zeta, \quad
\eta = \alpha + \xi,\quad
\zeta = \eta - \beta
$$
which are equivalent to the structure equations.

iii) 
$(\id - P^z_b \circ P^{-a}_y)(x)$ is the set of all $\omega \in \Omega$ that can be written 
as $\omega = \xi - \beta$ with $\xi \in x$ and $\beta := P^z_b (P^{-a}_y (\xi))$.
This means that there is $\beta \in b$ and $\zeta \in z$ such that
$\eta:=P^{-a}_y(\xi)= \zeta + \beta$. And this means that there is $\alpha \in a$ and
$\eta \in y$ such that $\xi = -\alpha + \eta$. Again, the three conditions thus obtained,
$$
\omega = \xi - \beta, \quad
\eta = \zeta + \beta,\quad
\xi = -\alpha + \eta
$$
are equivalent to the structure equations.
(Note: it is not assumed in this lemma that $a$ or $b$ are groups.)
\end{proof}

\begin{lemma}\label{IdempotentLemma}
Let $a,b,x,y,z \in \cP$. Then:
\begin{enumerate}[label=\roman*\emph{)},leftmargin=*]
\item
Assume $b$ contains the origin of $\Omega$. 
If $(-a) \top y$ and $y \top b$, then $R_{ayby} = \id$, hence $\Gamma(x,a,y,b,y)=x$ for all $x \in \cP$.
\item
If $x \top b$ and $a \top x$, then $L_{xaxb} = \id$, hence $ \Gamma(x,a,x,b,z)=z$ for all $z \in \cP$.
\end{enumerate}
\end{lemma}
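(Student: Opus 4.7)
The plan is to reduce each claim to an operator-level computation using Lemma \ref{OperatorLemma}. The hypotheses of (i) are exactly those of Lemma \ref{OperatorLemma}(iii) with $z=y$, giving $\Gamma(x,a,y,b,y) = R_{ayby}(x)$; the hypotheses of (ii) are exactly those of Lemma \ref{OperatorLemma}(ii) with $y=x$, giving $\Gamma(x,a,x,b,z) = L_{xaxb}(z)$. Hence it suffices to prove $R_{ayby} = \id$ and $L_{xaxb} = \id$ as self-maps of $\Omega$; the two ``$\Gamma$''-identities then follow by evaluating at $x$ and $z$ respectively.

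For (i), I would unfold $R_{ayby} = \id - P^y_b \circ P^{-a}_y$ and chase an arbitrary $\omega \in \Omega$ through the composition. By definition of $P^{-a}_y$, the element $\eta := P^{-a}_y(\omega)$ lies in $y$. The crux is then that $P^y_b$ sends any element of $y$ to $o$: since $o \in b$ by hypothesis, $\eta = \eta + o$ is a legitimate decomposition of $\eta$ as (element of $y$) $+$ (element of $b$), and $y \top b$ forces this to be \emph{the} unique such decomposition. Hence the $b$-component of $\eta$ is $o$, and $R_{ayby}(\omega) = \omega - o = \omega$. The argument for (ii) is the mirror image: unfolding $L_{xaxb} = -\check P^x_a \circ \check P^b_x + \id$, the inner map $\check P^b_x$ sends $\omega$ to some $\xi \in x$, and then $\check P^x_a(\xi) = o$ by the same uniqueness reasoning (using the implicit hypothesis $o \in a$, which is automatic in the intended applications where $a$ is a subgroup). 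Adding the resulting $o$ back therefore leaves $\omega$ unchanged.

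The only real content is a single ``collapsing'' principle: in a transversal decomposition $S \top T$, an element already lying in $S$ is projected onto $o$ on the $T$-side, provided $o \in T$. All the rest is bookkeeping of left vs.\ right projections and the sign conventions built into Definition \ref{ProjectionDefinition}. No serious obstacle is expected: the two assertions collapse to one-line consequences of Lemma \ref{OperatorLemma} once the collapsing principle is in hand.
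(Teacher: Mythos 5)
Your proof is correct and follows essentially the same route as the paper's: reduce to the operator identities of Lemma \ref{OperatorLemma} and then observe that the relevant projection collapses any element already lying in $y$ (resp.\ $x$) to $o$ by uniqueness of the transversal decomposition. You are also right that part (ii) silently uses $o\in a$ (the paper's one-line step $P^a_x \circ \check P^b_x = \check P^b_x$ needs it as well); this is harmless since $a$ is a subgroup in all intended applications.
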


\begin{proof} 
i) We have $P^y_b(y)=\{o\}$ since $o\in y$, hence
 $R_{ayby}= \id-P^y_b P^{-a}_y = \id $.

ii) $L_{xaxb} = - \check P^x_a \check P^b_x +\id =
- (\id - P^a_x) \circ \check P^b_x + \id =
- (\check P^b_x - \check P^b_x) + \id = \id$
\end{proof}

\subsection{The ``canonical kernel''}

\begin{definition}\label{KernelDefinition}
The {\em canonical kernel} is the family of maps defined by 
\begin{eqnarray*}
\Kappa_{x,y}^a &:= & P^a_x \vert_y:y \to x, \ \eta \mapsto P^a_x(\eta), \cr
\check \Kappa_{x,y}^b &:=& \check P^b_x \vert_y:y \to x, \ \eta \mapsto \check P^b_x(\eta), 
\end{eqnarray*}
where 
$x,y\in \cP$, $a,b \in \Gras(\Omega)$ with
$a\top x$ and $x \top b$. We let
$$
\Beta_{y}^{a,x,b} := K_{y,x}^a \circ \check K_{x,y}^b =
 P^a_y \circ \check P_x^b\vert_y: y \to y, \quad
\eta \mapsto P^a_y \circ \check P_x^b (\eta).
$$
\end{definition}

\begin{lemma}\label{KernelLemma}
Let $x,y\in \cP$, $a,b \in \Gras(\Omega)$ with $a\top x$ and $y \top b$. 
Then $\Kappa_{x,y}^a:y \to x$ is bijective iff $a \top y$, and if this holds, then,
for all $\eta \in y$,
$$
\Kappa^a_{x,y}(\eta) = \check T^a_{x,y}(\eta) = L_{xayb}(\eta) .
$$
Similarly, $\check \Kappa_{y,x}^b:x\to y$ is bijective iff $x \top b$,
and if this holds then, for $\xi \in x$,
$$
\check \Kappa^b_{y,x}(\xi)= T^b_{y,x}(\xi)=R_{axby}(\xi) .
$$
It follows that
 $a \top x$ if and only if $\Beta_y^{b,x,a}:y \to y$ is bijective,
and if this is the case, 
$$
\Beta_y^{b,x,a} = \check T^a_{y,x} \circ T^b_{x,y}\vert_y : y\to y , \qquad
\bigl( \Beta_y^{b,x,a} \bigr)\inv = T^b_{y,x} \circ \check T^a_{x,y}:y\to y \, .
$$
\end{lemma}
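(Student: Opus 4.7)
My plan is to prove the three assertions in order: (i) the claim about $\Kappa^a_{x,y}$ together with the threefold identification of its values, (ii) the dual claim for $\check\Kappa^b_{y,x}$, and (iii) the $\Beta$-claim obtained by factoring $\Beta^{b,x,a}_y|_y$ as a composition of the $\Kappa$-bijections from (i) and (ii).

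For (i), I fix $\eta\in y$ and use $a\top x$ to decompose $\eta=\alpha+\xi$ uniquely, so $\Kappa^a_{x,y}(\eta)=\xi$ by definition. If $a\top y$ holds, then for each $\xi\in x$ the $a\top y$-decomposition $\xi=\alpha'+\eta$ produces a unique preimage $\eta=-\alpha'+\xi\in y$, with $-\alpha'\in a$ since $a$ is a subgroup. Conversely, if $\Kappa^a_{x,y}$ is bijective, then for arbitrary $\omega\in\Omega$ I decompose $\omega=\alpha+\xi$ via $a\top x$, pick the unique $\eta\in y$ with $P^a_x(\eta)=\xi$, write $\eta=\alpha''+\xi$, and observe $\omega=(\alpha-\alpha'')+\eta$; uniqueness of this $a\top y$-decomposition follows from the injectivity of $\Kappa^a_{x,y}$. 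For the identification of values, I expand $\check T^a_{x,y}(\eta)=\check P^y_a(\eta)-\check P^x_a(\eta)+\eta$ and $L_{xayb}(\eta)=-\check P^x_a(\check P^b_y(\eta))+\eta$ using two observations: $\check P^y_a(\eta)=0$, since $\eta=0+\eta$ is the $a\top y$-decomposition of $\eta$ with $0\in a$; and $\check P^b_y(\eta)=\eta$, since $\eta=\eta+0$ is the $y\top b$-decomposition of $\eta$ with $0\in b$. Both expressions collapse to $-\check P^x_a(\eta)+\eta=-\alpha+\alpha+\xi=\xi=\Kappa^a_{x,y}(\eta)$.

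Part (ii) follows from (i) by applying it to the opposite group $\Omega^{\mathrm{opp}}$, which exchanges $P$ with $\check P$ and left with right transversality.

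For (iii), I identify the two factors in the composition $\check T^a_{y,x}\circ T^b_{x,y}|_y$ as maps between $x$ and $y$. Using $T^b_{x,y}=\check P^b_x-\check P^b_y+\id$ together with $\check P^b_y(\eta)=\eta$ for $\eta\in y$, the restriction $T^b_{x,y}|_y$ equals $\check P^b_x|_y=\check\Kappa^b_{x,y}:y\to x$, which by the dual of (i) is bijective iff $y\top b$ (given). Using $\check T^a_{y,x}=\check P^x_a-\check P^y_a+\id$ and $\check P^x_a(\xi)=0$ for $\xi\in x$, the restriction $\check T^a_{y,x}|_x$ equals the pointwise map $-\check P^y_a+\id$, which by Lemma \ref{ProjectionLemma}(i) is precisely $P^a_y=\Kappa^a_{y,x}:x\to y$; by (i) with the roles of $x,y$ interchanged, this is bijective iff $a\top x$. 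Thus $\Beta^{b,x,a}_y|_y=\Kappa^a_{y,x}\circ\check\Kappa^b_{x,y}$ and the equivalence ``$a\top x$ iff $\Beta^{b,x,a}_y$ bijective'' reduces to the corresponding $\Kappa$-statement. The inverse formula then follows from the elementary identities $\Kappa^a_{y,x}\circ\Kappa^a_{x,y}=\id_y$ and $\check\Kappa^b_{x,y}\circ\check\Kappa^b_{y,x}=\id_x$ (both immediate from uniqueness of transversal decompositions): $(\Beta^{b,x,a}_y)^{-1}=\check\Kappa^b_{y,x}\circ\Kappa^a_{x,y}$, which by (i) and (ii) coincides with $(T^b_{y,x}\circ\check T^a_{x,y})|_y$.

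The main obstacle is the careful bookkeeping of sub/superscript conventions for $P$ and $\check P$ and the transversality each operator requires, compounded by the need to respect the order of additions in the non-abelian $\Omega$; once this is set up, every identification reduces to a short manipulation using $\check P+P=\id$ and the vanishing of a projection on the subgroup it projects onto.
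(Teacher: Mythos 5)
Your proposal is correct and follows essentially the same route as the paper's (much terser) proof: the bijectivity criterion via unique transversal decompositions, the collapse of $\check T^a_{x,y}$ and $L_{xayb}$ on $y$ using $\check P^y_a(\eta)=o$ and $\check P^b_y(\eta)=\eta$, and the factorization of $\Beta$ as the composition of the two canonical-kernel bijections. You merely fill in details the paper leaves as "clear" or "follows", so no further comparison is needed.
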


\begin{proof}
It is clear that $y$ is another set of representatives for $a\backslash \Omega$ iff
the projection from $y$ to $x$ is a bijection.
If this holds, then
$$
\check T^a_{x,y}(\eta)= (\check P^y_a + P^a_x)(\eta)=P^a_x(\eta),
$$
$$
L_{xayb}(\eta)= -\check P^x_a \check P^b_y (\eta) + \eta =
(-\check P^x_a + \id) \eta = P^a_x(\eta).
$$
The second claim is proved in the same way, and the last statement follows.
\end{proof}

If $x,y,a,b$ are vector lines in $\bR^2$, then $\Beta^{a,x,b}_y$ is the linear map obtained
by first  projecting $y$ along $b$ onto $x$ and than back onto $y$ along $a$.
Here is a quite general description that applies to the case of vector spaces,
where one sees the close relation with the so-called {\em Bergman operators}
known in Jordan theory:

\begin{lemma}\label{KernelLemma2}
Assume $\Omega$ is a direct product of its subgroups $y$ and $b$,
and let $a$ be a subgroup such that $y \top a$ and $x \in \cP$ such that $x \top b$.
Realize $x=G_X$ as a graph of a map $X:y \to b$ and $a$ as a graph of a
group homomorphism $A:b \to y$. Then 
$$
\Beta_y^{a,x,b} = \id_y - A \circ X: y \to y .
$$
In particular, $x \top a$ if, and only if, $\id_y -A \circ X$ is bijective.
\end{lemma}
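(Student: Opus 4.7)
The plan is to unwind the composition $\Beta_y^{a,x,b} = P^a_y \circ \check P^b_x \vert_y$ by computing each projection explicitly in the direct-product coordinates $\Omega = y \oplus b$, using the graph descriptions $x = G_X$ and $a = G_A$. The decisive structural fact is that in a direct product, every $\omega \in \Omega$ has a unique decomposition $\omega = \eta_0 + \beta_0 = \beta_0 + \eta_0$, so one may freely commute $y$-elements past $b$-elements and identify an element by its $y$- and $b$-coordinates.

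First I will compute $\check P^b_x(\eta)$ for $\eta \in y$. Writing $\eta = \xi + \beta$ with $\xi \in x$, $\beta \in b$ and substituting $\xi = \eta_1 + X(\eta_1)$, the $y$- and $b$-components of $\eta_1 + X(\eta_1) + \beta$ are $\eta_1$ and $X(\eta_1) + \beta$, respectively. Matching against $\eta = (\eta, 0)$ forces $\eta_1 = \eta$ and $\beta = -X(\eta)$, so $\check P^b_x(\eta) = \eta + X(\eta)$. Next I will compute $P^a_y(\eta + X(\eta))$: writing $\eta + X(\eta) = \alpha + \eta'$ with $\alpha = \beta_0 + A(\beta_0) \in a$ and $\eta' \in y$, I move $\beta_0$ past the $y$-part $A(\beta_0) + \eta'$ (legal in the direct product) to read off the $b$-component as $\beta_0$ and the $y$-component as $A(\beta_0) + \eta'$. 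Matching with the coordinates $(\eta, X(\eta))$ of $\eta + X(\eta)$ forces $\beta_0 = X(\eta)$ and $\eta' = -A(X(\eta)) + \eta$, which is the asserted formula $(\id_y - A \circ X)(\eta)$ in the pointwise operator sense of Section~\ref{Sec:Operator}.

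For the bijectivity claim I will argue directly. The transversality $x \top a$ says that every $\omega = (\eta_0, \beta_0) \in \Omega$ admits a unique representation $\omega = (\eta + X(\eta)) + (\beta + A(\beta))$. Passing to direct-product coordinates yields the system $X(\eta) + \beta = \beta_0$ and $\eta + A(\beta) = \eta_0$; solving the first for $\beta = -X(\eta) + \beta_0$ and substituting, while using that $A$ is a homomorphism to expand $A(-X(\eta) + \beta_0) = -A(X(\eta)) + A(\beta_0)$, collapses the system to the single equation $(\id_y - A \circ X)(\eta) = \eta_0 - A(\beta_0)$. Since the right-hand side sweeps out all of $y$ as $(\eta_0,\beta_0)$ varies over $\Omega$, unique solvability in $\eta$ for every $\omega$ is equivalent to $\id_y - A \circ X$ being a bijection of $y$. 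Alternatively one could appeal to Lemma~\ref{KernelLemma}, after checking that under the present direct-product hypothesis $b \top x$ also holds.

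The main obstacle is careful bookkeeping of additive orders in the non-abelian setting: both projections return expressions like $-A(X(\eta)) + \eta$, and the shorthand $\id_y - A \circ X$ must be read as a pointwise composite in the sense of Section~\ref{Sec:Operator}. What keeps things manageable is the direct-product hypothesis, which lets $y$- and $b$-elements commute past each other inside $\Omega$; the homomorphism property of $A$ is used in exactly one place, to expand $A(-X(\eta) + \beta_0)$ in the bijectivity step.
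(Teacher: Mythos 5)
Your computation of $\Beta_y^{a,x,b}$ follows essentially the paper's own route: evaluate $\check P^b_x(\eta)=\eta+X\eta$ from the decomposition $\eta=(\eta+X\eta)+(-X\eta)$, then project onto $y$ along $a$ using that $a=G_A$ and that $y$- and $b$-elements commute in the direct product. One bookkeeping caveat: you land on $\eta'=-A(X\eta)+\eta$, whereas the stated operator $\id_y-A\circ X$ means, in the pointwise convention of Section \ref{Sec:Operator}, $\eta\mapsto\eta-A(X\eta)$; these agree only if $\eta$ commutes with $A(X\eta)$ inside $y$. The discrepancy is traceable to which decomposition you use for the second projection: the paper writes $\eta+X\eta=(\eta-AX\eta)+(AX\eta+X\eta)$, i.e.\ the decomposition $\omega=\eta'+\alpha$ adapted to the hypothesis $y\top a$, while you solved $\omega=\alpha+\eta'$. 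Since the lemma assumes $y\top a$, the former is the intended reading, and your argument goes through verbatim with that order — this is a convention issue, not a gap. Where you genuinely depart from the paper is the final claim: the paper leaves the equivalence of $x\top a$ with bijectivity of $\id_y-A\circ X$ to Lemma \ref{KernelLemma}, whereas you verify it directly by reducing the unique-decomposition condition for $x\top a$ to the single equation $(\id_y-A\circ X)(\eta)=\eta_0-A(\beta_0)$; this is a clean, self-contained alternative (and there you do obtain the difference in the correct order), at the cost of using the homomorphism property of $A$ explicitly rather than quoting the earlier lemma.
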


\begin{proof}
We have to show that, for all $\eta \in y$, 
$P^a_y (\check P^b_x \eta) = \eta - AX\eta$.
Indeed:
since $\eta = \eta + X\eta - X\eta$ is a decomposition according to $x \top b$,
we have $ \check P^b_x \eta = \eta + X \eta$.
Next, we decompose
$\eta + X \eta =  \eta - AX \eta + AX \eta + X \eta$ with
$\eta -AX \eta \in y$ and $AX \eta + X \eta \in G_A =a$, wence
$P_y^a (\check P^x_b \eta) = P_y^a (\eta +X\eta) = \eta - AX\eta$.
\end{proof}

\section{The balanced  torsors $U_{ab}$}\label{Sec:The torsors}

\begin{theorem}\label{PairTheorem}
Fix a pair $(a,b)$ of subgroups of $\Omega$. Then the maps 
$$
\Pi_{ab}^+ : a^\top \times { }^\top b \times a^\top \to a^\top, \quad
(x,y,z) \mapsto \Gamma(x,a,y,b,z)
$$
$$
\Pi_{ab}^-:  { }^\top b \times a^\top \times { }^\top b \to  { }^\top b, \quad
(x,y,z) \mapsto \Gamma(x,a,y,b,z)
$$
are well-defined.
\end{theorem}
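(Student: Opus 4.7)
The plan is to read both claims off the operator descriptions of $\Gamma$ given in Lemma \ref{OperatorLemma}. The essential idea is that the operators $L_{xayb}$ and $R_{aybz}$ appearing there are shifts by elements of the subgroups $a$ and $b$; hence they preserve cosets, and so they send cross-sections of $a$ (resp.\ $b$) to cross-sections of $a$ (resp.\ $b$).

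For $\Pi_{ab}^+$ I would proceed as follows. The hypotheses $a \top x$, $y \top b$, $a \top z$ are exactly those required by Lemma \ref{OperatorLemma}(ii), which gives $\Gamma(x,a,y,b,z) = L_{xayb}(z)$ with $L_{xayb} = -\check P^x_a \circ \check P^b_y + \id_\Omega$. Since $\check P^x_a$ takes values in the subgroup $a$, one reads off that $L_{xayb}(\omega) \in a + \omega$ for every $\omega \in \Omega$; that is, $L_{xayb}$ preserves the fibers of the canonical projection $\Omega \to a\backslash \Omega$. But $a \top z$ is exactly the statement that $z$ is a cross-section of this projection, and applying $L_{xayb}$ pointwise produces another cross-section (existence and uniqueness of one representative per fiber is inherited from $z$), so $a \top \Gamma(x,a,y,b,z)$, as required.

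For $\Pi_{ab}^-$ the argument is dual. Since $a$ is a subgroup, $-a = a$, so the hypothesis $a \top y$ coincides with $(-a) \top y$; together with $z \top b$ this activates Lemma \ref{OperatorLemma}(iii) and gives $\Gamma(x,a,y,b,z) = R_{aybz}(x)$ with $R_{aybz} = \id_\Omega - P^z_b \circ P^{-a}_y$. Because $P^z_b$ has image in $b$ and $-b = b$, each $\xi \in \Omega$ is mapped by $R_{aybz}$ to an element of $\xi + b$, so $R_{aybz}$ preserves the fibers of $\Omega \to \Omega/b$. The hypothesis $x \top b$ says $x$ is a cross-section for these fibers, whence so is $R_{aybz}(x) = \Gamma(x,a,y,b,z)$, giving the desired membership in ${}^\top b$.

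I do not expect a genuine obstacle: the whole argument is bookkeeping of one-sided cosets. The only care needed is to keep the left/right distinction straight, to apply the correct part of Lemma \ref{OperatorLemma} in each case, and to use the subgroup hypothesis on $a$ and $b$ at precisely the right moments -- namely, to identify $-a$ with $a$ in the hypothesis of Lemma \ref{OperatorLemma}(iii), and to recognize the shifts $-\check P^x_a\check P^b_y(\omega) \in a$ and $-P^z_b P^{-a}_y(\xi) \in b$ as genuinely coset-preserving.
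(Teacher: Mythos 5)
Your argument is correct, and it rests on the same key input as the paper's proof, namely the operator descriptions $\Gamma(x,a,y,b,z)=L_{xayb}(z)$ and $\Gamma(x,a,y,b,z)=R_{aybz}(x)$ from Lemma \ref{OperatorLemma} (including the small but necessary observation that $-a=a$ turns the hypothesis $(-a)\top y$ of part (iii) into $a\top y$). Where you diverge is in the finishing step. The paper identifies $a^\top$ with the set of right graphs $\check G_Z$ of maps $Z:x\to a$ (via Lemma \ref{GraphtransversalLemma} and the remark following it), computes $L_{xayb}(Z\xi+\xi)=F(\xi)+Z(\xi)+\xi$ with $F(\xi)\in a$, and concludes that the image is again a right graph, namely that of $F+Z:x\to a$; similarly for $R_{aybz}$ acting on left graphs $G_X$ with $X:z\to b$. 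You instead observe directly that $L_{xayb}(\omega)\in a+\omega$ and $R_{aybz}(\xi)\in \xi+b$, so that these operators preserve the fibers of $\Omega\to a\backslash\Omega$ and $\Omega\to\Omega/b$ respectively, and that a fiber-preserving map carries a set of coset representatives to another such set -- which is precisely the characterization of $a\top z$ and $x\top b$. The two arguments encode the same fact ($F(\xi)\in a$ \emph{is} the statement that $L$ moves points within their $a$-coset), but yours is more economical: it bypasses the graph correspondence entirely. What the paper's version buys in exchange is the explicit formula for the image as the graph of $F+Z$, which is the germ of the ``affine picture'' developed in Theorem \ref{PictureTheorem}; your version yields well-definedness with less machinery but no such formula.
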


\begin{proof}
Assume $a\top x$ and use the bijection $\Omega \cong a \times x$ in order to write
other subsets as graphs: let $a\top z$ and write $z=\check G_Z$  with a map $Z:x\to a$, and let
 $y \top b$. 
We have to show that
$L_{xayb} (z)$ belongs again to $a^\top$, that is, that it can be written as  is a graph.
In order to prove this, let $\zeta = Z \xi + \xi \in z$, where $\xi \in x$.
Then
$$
L_{xayb}(Z\xi + \xi) =(  - \check P_a^x \circ \check P^b_y + \id)
(Z \xi + \xi) =
 - \check P_a^x \circ \check P^b_y (Z \xi + \xi) + Z \xi + \xi \ .
$$
Define the map
$$
F:x \to a, \quad
\xi \mapsto  - \check P_a^x \circ \check P^b_y (Z \xi + \xi)
$$
so that $L_{xayb}(\zeta)=F(\xi) + Z(\xi) + \xi$,
hence $L_{xayb}z$ is the graph of $F+Z:x \to a$, and it follows that $\Pi^+_{ab}$ is well-defined.

\ssk
Next assume that $x\top b$, $z \top b$ and $a \top y$ and identify $\Omega \cong z \times b$.
 Write $x = G_X$ as graph of a map $X: z \to b$.
We have to show that $R_{aybz}x$ belongs again to ${ }^\top b$. Let $\xi = \zeta + X \zeta \in x$, so
$$
R_{aybz}(\zeta + X \zeta) =
(\id -  P^z_b \circ P^a_y)(\zeta + X \zeta) =
\zeta + X \zeta -    P^z_b ( P^a_y (\zeta + X \zeta))
$$
and as above we see that $R_{aybz}x$ is the graph of a map $X + F:z \to b$, hence belongs
to ${ }^\top b$. Thus $\Pi^-_{ab}$ is well-defined.
\end{proof}

\begin{remark} See  \cite{BeKi10}, Theorem 1.8 for the case of abelian  $\Omega$ and linear
maps: in this case,  one can give 
 a  more explicit form for $F$ in terms of block matrices.
\end{remark}

\begin{theorem}\label{BalancedTorsorTheorem}
Let $\Omega$ be a group and $(a,b)$ a pair of  subgroups of $\Omega$. Then
\begin{enumerate}
\item
 the set
$
a^\top \cap { }^\top b = \{ x  \in \cP \mid \, a\top x, \, x \top b \}
$
is a subsemitorsor of $\cP_{ab}$, and with respect to the induced law it is a torsor, 
which we will denote by  $U_{ab}$. 
\item
 $b^\top \cap { }^\top a$ is a subsemitorsor of $\check \cP_{ab}$, and with the induced 
 law it becomes a
 torsor, denoted by 
$\check U_{ab}$,  
\item
the torsor $\check U_{ba}$ is the opposite torsor of $U_{ab}$:
$
\check U_{ba} = U_{ab}^{opp}$.
\end{enumerate}
\end{theorem}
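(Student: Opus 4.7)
The plan is to deduce each of the three statements from results already established in the excerpt, so that very little fresh computation is needed.

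For (1), the semitorsor law on all of $\cP$ is already provided by Theorem \ref{SemitorsorTheorem}. So the only task is to show that $a^\top \cap { }^\top b$ is stable under $(x,y,z)\mapsto\Gamma(x,a,y,b,z)$, and then to check the idempotent law (T2). Stability is exactly the content of Theorem \ref{PairTheorem}: for $x,y,z \in a^\top \cap { }^\top b$, the middle argument $y$ lies in ${ }^\top b$, so $\Pi_{ab}^+$ yields $\Gamma(x,a,y,b,z) \in a^\top$; simultaneously $y \in a^\top$, so $\Pi_{ab}^-$ yields $\Gamma(x,a,y,b,z) \in { }^\top b$. Hence the product lies in $a^\top \cap { }^\top b$, and the para-associative law (T1) is inherited from $\cP_{ab}$. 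For the idempotent law, let $x,y \in a^\top \cap { }^\top b$. Lemma \ref{IdempotentLemma}(ii) applies to $x$ (since $a \top x$ and $x \top b$) and gives $\Gamma(x,a,x,b,y)=y$, i.e.\ $(xxy)_{ab}=y$. For the other equality $(yxx)_{ab}=y$, apply Lemma \ref{IdempotentLemma}(i) to $x$: since $a$ is a \emph{subgroup} of $\Omega$, one has $-a=a$ as subsets, so $(-a)\top x$ holds automatically from $a\top x$; together with $x \top b$ this yields $\Gamma(y,a,x,b,x)=y$. Thus (T2) holds and $U_{ab}$ is a torsor.

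Statement (2) is obtained by the same argument with the roles of $\Gamma$ and $\check\Gamma$ interchanged. More precisely, $\check\Gamma$ is built from the opposite group law on $\Omega$, so one may either rerun the proof of Theorem \ref{PairTheorem} and Lemma \ref{IdempotentLemma} in $\Omega^{opp}$ (where left- and right-transversality swap roles, so ``$a^\top$'' becomes ``${}^\top a$'' and vice versa), or simply observe that the hypotheses $b\top x$, $x\top a$, $y\top a$, $b\top y$ for $x,y \in b^\top \cap { }^\top a$ are exactly what is needed for the analogous checked versions of the stability and idempotency lemmas.

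Statement (3) is purely formal and follows from the symmetry relation $\check\Gamma(z,b,y,a,x)=\Gamma(x,a,y,b,z)$ established in (\ref{Sym1}). Indeed, the underlying sets agree: $\check U_{ba}$ lives on $a^\top \cap { }^\top b$, which is also the carrier of $U_{ab}$. Comparing ternary products, for $x,y,z \in a^\top \cap { }^\top b$,
\[
(xyz)_{ba}\check{} \;=\; \check\Gamma(x,b,y,a,z) \;=\; \Gamma(z,a,y,b,x) \;=\; (zyx)_{ab},
\]
which is precisely the law of $U_{ab}^{opp}$.

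I do not expect a genuine obstacle: Theorem \ref{PairTheorem} has already done the delicate stability work (there one had to realise the transformed sets as graphs of maps), and Lemma \ref{IdempotentLemma} has already done the idempotency computation via the operator formulas for $L_{xaxb}$ and $R_{ayby}$. The one point requiring a moment of care is the use of ``$a$ is a subgroup'' in deducing $(yxx)_{ab}=y$, which is needed because Lemma \ref{IdempotentLemma}(i) is stated with the hypothesis $(-a)\top y$ rather than $a\top y$; but for subgroups these coincide.
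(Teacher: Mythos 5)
Your proposal is correct and follows essentially the same route as the paper: stability of $a^\top \cap { }^\top b$ from Theorem \ref{PairTheorem}, the idempotent law from Lemma \ref{IdempotentLemma}, and parts (2) and (3) from the symmetry relation (\ref{Sym1}). You merely spell out details the paper leaves implicit (in particular the harmless point that $(-a)\top x$ coincides with $a\top x$ for a subgroup $a$), which is a welcome clarification rather than a deviation.
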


\begin{proof} (1)
The fact that $a^\top \cap { }^\top b \subset \cP_{ab}$ 
is a subsemitotorsor follows directly from the preceding theorem. 
The idempotent laws are satisfied by Lemma \ref{IdempotentLemma}.
Thus $U_{ab}$ is a torsor.
Now (2) and (3) follow by the symmetry relation (\ref{Sym1}).
Note that the underlying sets of  $\check U_{ba}$ and $U_{ab}$ obviously agree.
\end{proof}

\begin{definition}
The {\em tautological bundle of $\cP(\Omega)$} is the set
$$
\hat \cP= \hat \cP(\Omega) :=
\bigl\{ (y,\eta) \mid\  y \in \cP(\Omega), \eta \in y \bigr\} ,
$$
and the map $\pi:\hat \cP \to\cP$, $(y,\eta) \mapsto y$ is called
the {\em canonical projection}.
\end{definition}

\begin{theorem}\label{KernelTheorem}
Let $\Omega$ be a group, $(a,b)$ a pair of subgroups of $\Omega$, and fix
$y \in U_{ab}$, considered as neutral element of the group $(U_{ab},y)$ defined
by the preceding theorem. Then there are natural left actions 
\begin{eqnarray*}
U_{ab} \times \cP \to \cP, & & (x,z) \mapsto x.z := \Gamma(x,a,y,b,z) ,
\cr
U_{ab} \times \hat \cP \to \hat \cP, & & (x,(z,\zeta)) \mapsto x.(z,\zeta) :=
\bigl( x.z, L_{xayb}(\zeta) \bigr),
\end{eqnarray*}
by ``bundle maps'', i.e., we have $\pi (x.(z,\zeta)) = x. (\pi(z,\zeta))$.
Over $a^\top$, this action can be trivialized: it is  given in terms of the canonical kernel by
(whith $\eta \in y$)
$$
x.(y,\eta) = (x.y, \Kappa^a_{x,y} (\eta)) .
$$
Similarly, we have natural right actions of $U_{ab}$ on
$\cP$ and on $\hat\cP$, which commute with the left actions.
In particular, $U_{ab}$ acts by conjugation on the fiber over the neutral
element, and this action is given by the explicit formula
$$
U_{ab} \times y \to y, \quad \eta \mapsto x  \eta x\inv =
L_{xayb} \circ R_{axby} (\eta) =\Beta^{a,x,b}_y (\eta).
$$
\end{theorem}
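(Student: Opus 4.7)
The plan is to verify each part of the theorem in sequence, leveraging the operator calculus of Section \ref{Sec:Operator} and the para-associative law proved in Theorem \ref{SemitorsorTheorem}.

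First I would establish the left action on $\cP$. The map $x.z := \Gamma(x,a,y,b,z)$ satisfies $y.z = z$ by Lemma \ref{IdempotentLemma}(ii) (applied with $x$ replaced by $y$, since $a\top y$ and $y\top b$), and the associativity
$$ (x_1\cdot x_2).z \;=\; \Gamma(\Gamma(x_1,a,y,b,x_2),\,a,y,b,z) \;=\; \Gamma(x_1,a,y,b,\Gamma(x_2,a,y,b,z)) \;=\; x_1.(x_2.z) $$
is precisely the third identity of the para-associative law, where $x_1\cdot x_2 = \Gamma(x_1,a,y,b,x_2)$ is the group product in $(U_{ab},y)$. The right action $z.x := \Gamma(z,a,y,b,x)$ is handled symmetrically (using the first identity of para-associativity), and the two actions commute via the middle identity of that law.

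Next I would lift the left action to $\hat\cP$ by $x.(z,\zeta) := (x.z,\, L_{xayb}(\zeta))$. This is well-defined because, by Lemma \ref{OperatorLemma}(ii), $L_{xayb}(z) = \Gamma(x,a,y,b,z) = x.z$, so $L_{xayb}(\zeta) \in x.z$ whenever $\zeta\in z$; the bundle identity $\pi(x.(z,\zeta)) = x.\pi(z,\zeta)$ is then immediate. The identity axiom $y.(z,\zeta) = (z,\zeta)$ reduces to $L_{yayb}=\id$, which is Lemma \ref{IdempotentLemma}(ii). For associativity at the element level I would apply the set-level para-associative law to the singleton $\{\zeta\}$: since $L_{xayb}$ is a genuine single-valued map on $\Omega$, the set equality becomes the pointwise identity $L_{(x_1\cdot x_2)ayb}(\zeta) = L_{x_1 ayb}\circ L_{x_2 ayb}(\zeta)$. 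The right action lifts via $R_{aybx}$ by the analogous argument (Lemma \ref{OperatorLemma}(iii)), and the commutativity of the two lifts follows from the same singleton trick.

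The trivialization over $a^\top$ is then immediate: for $\eta \in y$, Lemma \ref{KernelLemma} gives $L_{xayb}(\eta) = \Kappa^a_{x,y}(\eta)$, hence $x.(y,\eta) = (x.y,\Kappa^a_{x,y}(\eta))$. Finally, for the conjugation formula: since $x\cdot y\cdot x\inv=y$ in $(U_{ab},y)$, the composite of the left action of $x$ and the right action of $x\inv$ preserves the fiber over $y$; the claim is that this composite, re-expressed as $L_{xayb}\circ R_{axby}$, coincides on $y$ with the Bergman-type operator $\Beta^{a,x,b}_y = P^a_y \circ \check P^b_x\vert_y$. I would obtain this by unravelling both compositions through the decomposition $\eta = \alpha+\eta'+\beta$ (with $\alpha\in a$, $\eta'\in y$, $\beta\in b$) guaranteed by the chain of transversalities $a\top y$, $a\top x$, $x\top b$, $y\top b$; both operators should extract the middle factor $\eta'$.

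The main obstacle is this last identification. Unlike in the abelian/linear setting of Lemma \ref{KernelLemma2}, where the identity reads simply $\id_y - A\circ X$, in the non-abelian setting one must track several distinct decompositions of the same element $\eta$ (the $(a,x)$-decomposition, the $(x,b)$-decomposition, the $(a,y)$-decomposition and the $(y,b)$-decomposition), and use the subgroup hypothesis on $a$ and $b$ (so that $o\in a\cap b$ and $-a=a$, $-b=b$) to reconcile the signs appearing in $L$ and $R$. The cleanest route, which I would follow, is to factor both $L_{xayb}\vert_y$ and $R_{axby}\vert_y$ through the transvection operators $T^b_{x,y}$ and $\check T^a_{y,x}$: the Bergman-type identity $\Beta = \check T\circ T\vert_y$ recorded in Lemma \ref{KernelLemma} then delivers the result without further computation.
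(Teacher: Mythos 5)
Your proposal is, in substance, the paper's own proof: the action axioms are reduced to the operator identities $L_{xayb}\circ L_{x'ayb}=L_{\Gamma(x,a,y,b,x')ayb}$ and $L_{xayb}\circ R_{ax'by}=R_{ax'by}\circ L_{xayb}$, which are obtained by applying para-associativity (Theorem \ref{SemitorsorTheorem} together with Lemma \ref{OperatorLemma}) to singleton subsets, and the conjugation formula is then read off from the canonical kernel. The identification $y.z=z$ via Lemma \ref{IdempotentLemma}, the bundle property via $L_{xayb}(z)=x.z$, and the trivialization via Lemma \ref{KernelLemma} are all exactly as intended.

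One caution on your last step, which is the only place where your wording does not quite typecheck. You propose to ``factor both $L_{xayb}\vert_y$ and $R_{axby}\vert_y$ through the transvection operators,'' but Lemma \ref{KernelLemma} identifies $L_{xayb}$ restricted to $y$ (as $\Kappa^a_{x,y}:y\to x$) and $R_{axby}$ restricted to $x$ (as $\check\Kappa^b_{y,x}:x\to y$); it says nothing about $R_{axby}$ on $y$, which is what the composite $L_{xayb}\circ R_{axby}(\eta)$, $\eta\in y$, applies first. The clean repair is available from what you have already proved: since $L_{xayb}\circ R_{axby}=R_{axby}\circ L_{xayb}$ as operators, the restriction to $y$ can be computed as $R_{axby}\vert_x\circ L_{xayb}\vert_y=\check\Kappa^b_{y,x}\circ\Kappa^a_{x,y}$, and this composite is the canonical kernel by Definition \ref{KernelDefinition} and Lemma \ref{KernelLemma} (up to the paper's own wavering between the index orders $\Beta^{a,x,b}_y$ and $\Beta^{b,x,a}_y$, i.e.\ between the kernel and its inverse). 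Your instinct to avoid the direct ``triple decomposition'' $\eta=\alpha+\eta'+\beta$ is sound: in the non-abelian case that decomposition is not unique, and the two operators correspond to the two different bracketings ($\eta'+\beta\in x$ versus $\alpha+\eta'\in x$), so no uniqueness argument is available there; the route through the commuting actions and Lemma \ref{KernelLemma} is the correct one, and is what the paper's one-line remark ``we use the definition of the canonical kernel'' amounts to.
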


\begin{proof}
Concerning the left action, everything amounts to proving the following identity for operators
on $\Omega$, with $x,x' \in U_{ab}$:
\begin{equation}\label{LeftoperatorIdentity}
L_{xayb}\circ L_{x'ayb} = L_{\Gamma(x,a,y,b,x'),ayb} .
\end{equation}
Note that the operator on the right hand side is well-defined since
$\Gamma(x,a,y,b,x') \in U_{ab}$, by the preceding theorem.
Now, para-associativity (Theorem \ref{SemitorsorTheorem}, 
combined with Lemma \ref{OperatorLemma})
shows that, applied to any subset $z \subset \omega$, both operators give the same
result. Taking for $z$ singletons, it follows that the operators coincide. 
The proof for the right action is similar, and the fact that both actions
commute again amounts to an operator identity
\begin{equation}
L_{xayb} \circ R_{ax'by} = R_{ax'by} \circ L_{xayb},
\end{equation}
which is proved by the same arguments as (\ref{LeftoperatorIdentity}). 
For $x=x'$, we use the definition of the canonical kernel (Definition \ref{KernelDefinition}) 
and get the action by conjugation.
\end{proof}

\begin{theorem}
Assume $(a,b)$ is a pair of central subgroups. Then the set 
$\Gras_{ab} := \Gras(\Omega) \cap U_{ab}$ is a subtorsor of $U_{ab}$ which
 acts from the left and
from the right on the Grassmannian $\Gras(\Omega)$ and on
the {\em Grassmann tautological bundle}
$$
\widehat \Gras (\Omega):= \{ (x,\xi) \mid x \in \Gras(\Omega), \xi \in x \}.
$$
\end{theorem}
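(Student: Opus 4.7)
The plan is to reduce everything to Theorems~\ref{SubgroupTheorem}, \ref{BalancedTorsorTheorem} and \ref{KernelTheorem}; under the centrality hypothesis these three results supply all the needed closure properties, and only the bookkeeping has to be done.

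\textbf{Step 1: $\Gras_{ab}$ is a subtorsor of $U_{ab}$.}
Let $x,y,z \in \Gras_{ab}$, so each of $x,y,z$ is a subgroup of $\Omega$ lying in $a^\top \cap {}^\top b$. By Theorem~\ref{SubgroupTheorem}(2), centrality of $a$ and $b$ implies that $\Gamma(x,a,y,b,z)$ is again a subgroup of $\Omega$, hence lies in $\Gras(\Omega)$. On the other hand, by Theorem~\ref{BalancedTorsorTheorem}(1), the set $a^\top \cap {}^\top b = U_{ab}$ is stable under $(x,y,z)\mapsto \Gamma(x,a,y,b,z)$. Thus $\Gamma(x,a,y,b,z) \in \Gras(\Omega) \cap U_{ab} = \Gras_{ab}$, and the idempotent laws hold in $\Gras_{ab}$ because they already hold in $U_{ab}$.

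\textbf{Step 2: Action on $\Gras(\Omega)$.}
Choose a base point $y \in \Gras_{ab}$, so that Theorem~\ref{KernelTheorem} supplies the left action $U_{ab} \times \cP \to \cP$, $(x,z)\mapsto x.z := \Gamma(x,a,y,b,z)$, and the analogous right action. Given $x \in \Gras_{ab}$ and $z \in \Gras(\Omega)$, all five arguments of $\Gamma$ are now subgroups, with $a$ and $b$ central. A second application of Theorem~\ref{SubgroupTheorem}(2) shows $x.z \in \Gras(\Omega)$. The associativity of the action is inherited from the associativity already established in Theorem~\ref{KernelTheorem}, so this indeed defines a left action of the group $(\Gras_{ab},y)$ on $\Gras(\Omega)$. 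The right action is handled symmetrically, and the two actions commute because the corresponding actions of $U_{ab}$ commute on $\cP$.

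\textbf{Step 3: Action on the tautological bundle $\widehat\Gras(\Omega)$.}
For $(z,\zeta) \in \widehat\Gras(\Omega)$ and $x \in \Gras_{ab}$, Theorem~\ref{KernelTheorem} gives
\[
x.(z,\zeta) = \bigl( x.z,\; L_{xayb}(\zeta) \bigr),
\]
together with the bundle identity $L_{xayb}(\zeta) \in x.z$. By Step~2, $x.z$ is again a subgroup, hence $(x.z, L_{xayb}(\zeta)) \in \widehat\Gras(\Omega)$. Thus the left $U_{ab}$-action on $\hat\cP$ restricts to a left $\Gras_{ab}$-action on $\widehat\Gras(\Omega)$. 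Once more the right action, and the fact that left and right actions commute, follow by the same argument applied to the corresponding right-multiplication operators $R_{ax'by}$.

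\textbf{Main obstacle.}
There is no genuine obstacle: the whole content of the theorem is the stability statement of Theorem~\ref{SubgroupTheorem}(2), applied first with $x,y,z \in \Gras_{ab}$ (for the subtorsor property) and then with only $x,y \in \Gras_{ab}$ and $z \in \Gras(\Omega)$ (for the action). The only point to watch is that the action requires a choice of base point $y$ inside $\Gras_{ab}$ itself, so that all five arguments of $\Gamma$ are subgroups whenever one wants to conclude that the output is a subgroup.
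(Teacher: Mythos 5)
Your proposal is correct and follows essentially the same route as the paper, whose entire proof is "combine the preceding result (Theorem \ref{KernelTheorem}) with Theorem \ref{SubgroupTheorem}"; you have simply written out the bookkeeping explicitly, including the correct observation that the base point $y$ must itself be chosen in $\Gras_{ab}$ so that all five arguments of $\Gamma$ are subgroups.
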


\begin{proof}
This follows by combining the preceding result with Theorem \ref{SubgroupTheorem}.
\end{proof}

\section{Distributive law and ``affine picture''}\label{Sec:Distributive}
 
The following fairly explicit description of the group law of $U_{ab}$ is the
analog of the ``affine picture'' from the abelian and linear case given in
Section 1 of \cite{BeKi10}: 
 
\begin{theorem}\label{PictureTheorem}
Let $(a,b)$ be a pair of subgroups of $\Omega$ and $x,y,z \in \cP$ such that $x,y,z \top b$ and
$a \top x,y$. Write $x$ and $z$ as left graphs with respect to the decomposition
 $\Omega \cong y \times b$, i.e., 
$x = G_X$, $z=G_Z$ with maps $X,Z:y \to b$. 
Then 
$$
\Gamma(G_X,a,y,b,G_Z) = G_{X + Z \circ \Beta_y^{a,x,b}} \ ,
$$
i.e.,
 $\Gamma(x,a,y,b,z)$ is the graph of the map 
$
X + Z \circ \Beta_y^{a,x,b}:y\to b$,
where $\Beta_y^{a,x,b}:y\to y$ is the canonical kernel
(Definition \ref{KernelDefinition}).
\end{theorem}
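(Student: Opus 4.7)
The approach is to compute $\Gamma(G_X, a, y, b, G_Z)$ via the operator identity $\Gamma(x,a,y,b,z) = M_{xabz}(y)$ from Lemma \ref{OperatorLemma}(i), which applies since $a \top x$ and $z \top b$. Writing $M_{xabz}(\eta) = P^a_x(\eta) - \eta + \check P^b_z(\eta)$ (Definition \ref{TranslationDefinition}), I would unravel each of the two projections on a generic $\eta \in y$ by using the graph descriptions of $x$ and $z$, and recognize the output as an element of a left graph.

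For the first projection, the $a\top x$ decomposition $\eta = \alpha + \xi$ gives $P^a_x(\eta) = \xi \in x = G_X$, and the graph condition together with $y\top b$-uniqueness determines a unique $\eta^* \in y$ with $\xi = \eta^* + X(\eta^*)$. For the second projection, write the $z\top b$ decomposition $\eta = \zeta + \beta$ and substitute $\zeta = \tilde\eta + Z(\tilde\eta) \in G_Z$, so that $\eta = \tilde\eta + (Z(\tilde\eta) + \beta)$; $y \top b$-uniqueness, combined with the fact that $b$ is a subgroup (so $Z(\tilde\eta)+\beta \in b$), forces $\tilde\eta = \eta$ and $\beta = -Z(\eta)$, whence $\check P^b_z(\eta) = \eta + Z(\eta)$. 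Substituting and cancelling $-\eta + \eta$ inside $\Omega$ yields $M_{xabz}(\eta) = \eta^* + X(\eta^*) + Z(\eta)$.

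It remains to identify $\eta$ with $\Beta_y^{a,x,b}(\eta^*)$. Unpacking Definition \ref{KernelDefinition}, $\Beta_y^{a,x,b}(\eta^*) = P^a_y(\check P^b_x(\eta^*))$. The identity $\eta^* = \xi + (-X(\eta^*))$, with $\xi \in x$ and $-X(\eta^*) \in b$ (using $b$ subgroup), is the unique $x\top b$ decomposition of $\eta^*$, giving $\check P^b_x(\eta^*) = \xi$; and $\xi = (-\alpha) + \eta$, with $-\alpha \in a$ (using $a$ subgroup) and $\eta \in y$, is the unique $a\top y$ decomposition of $\xi$, giving $P^a_y(\xi) = \eta$. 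Hence $\eta = \Beta_y^{a,x,b}(\eta^*)$, and $M_{xabz}(\eta) = \eta^* + (X + Z\circ\Beta_y^{a,x,b})(\eta^*)$, which is precisely the $\eta^*$-indexed element of $G_{X + Z\circ\Beta_y^{a,x,b}}$.

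Finally, under the hypotheses $a \top x$ and $y \top b$ both $\Kappa^a_{y,x}: x\to y$ and $\check \Kappa^b_{x,y}: y\to x$ are bijections (Lemma \ref{KernelLemma}), so $\Beta_y^{a,x,b}: y \to y$ is a bijection as well. Hence as $\eta$ ranges over $y$ the associated parameter $\eta^*$ ranges over all of $y$, and conversely; this shows $M_{xabz}(y) = G_{X + Z\circ\Beta_y^{a,x,b}}$, proving the theorem. The main technical obstacle is the non-commutative bookkeeping: one must carefully order every group operation and repeatedly use that $a$ and $b$ are subgroups in order to move the inverses $-\alpha$, $-X(\eta^*)$, $-Z(\eta)$ back into $a$ or $b$.
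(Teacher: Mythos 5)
Your proof is correct and follows essentially the same route as the paper: both reduce $\Gamma(x,a,y,b,z)$ to $M_{xabz}(y)=(P^a_x-\id+\check P^b_z)(y)$ via Lemma \ref{OperatorLemma}, unravel the two projections through the graph descriptions of $x$ and $z$ to get $P^a_x(\eta)=\eta^*+X\eta^*$ and $\check P^b_z(\eta)=\eta+Z\eta$, identify the reparametrization $\eta=\Beta_y^{a,x,b}(\eta^*)$ with the canonical kernel, and conclude by its bijectivity (the paper phrases this as $\eta'=\Beta^{-1}(\eta)$, which is the same relation read in the other direction). No gaps; the non-commutative bookkeeping and the uses of the subgroup hypotheses on $a$ and $b$ are all in order.
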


\nin This ``affine formula'' may be written, by identifying $x$ with $X$ and $z$ with $Z$,
\begin{equation}\label{AffineFormula}
X \cdot_{a,y,b} Z = X + Z \circ \Beta^{a,X,b}_y .
\end{equation}
Here $y=o^+$, $b=o^-$ are fixed ``basepoints'', and $a$ is the ``deformation parameter''.

\begin{proof}
Recall from Lemma  \ref{OperatorLemma} that 
$$
\Gamma(x,a,y,b,z) = (P^a_x - \id + \check P^b_z) (y)
 = \{ P^a_x(\eta) - \eta + \check P^b_z(\eta) \mid \, \eta \in y \} \, .
$$
Let $\eta \in y$. 
Since $P_x^a (\eta) \in x=G_X$ and $\check P^b_z(\eta) \in z=G_Z$, 
there exist unique $\eta' \in y$  and $\eta'' \in y$ such that
$$
P_x^a(\eta) = \eta' + X \eta' , \qquad \check P^b_z(\eta) = \eta'' + Z \eta'' \ .
$$
We determine $\eta'$ and $\eta''$ as functions of $\eta$: 
 since $\eta' \in y$ and $X \eta' \in b$, we have, by definition of the projection,
$$
\eta' = \check P^b_y (P_x^a (\eta)) = \Beta\inv (\eta),
$$
where $\Beta := \Beta_y^{a,x,b}:y \to y$ is the canonical kernel (Definition \ref{KernelDefinition}), and
in the same way, using Lemma \ref{ProjectionLemma}, we get
$$
\eta'' = \check P^b_y \check P^b_z (\eta) = \check P^b_y \eta = \eta ,
$$
whence $\check P_z^b(\eta)= \eta + Z \eta$. 
Since the operator $\Beta:y\to y$ is bijective (Lemma \ref{KernelLemma}), we can make
a change of variables $\eta' = \Beta\inv \eta$, $\eta=\Beta \eta' $, and we get
\begin{eqnarray*}
(P^a_x - \id + \check P^b_z) (\eta)  & = &
\eta' + X \eta' - \eta + \eta +  Z\eta
\cr 
&=&
\eta' + X \eta' + Z \Beta  \eta'
\cr
&=&
\eta' + (X + Z \circ \Beta) \eta' \, ,
\end{eqnarray*}
and hence, invoking Lemma \ref{OperatorLemma},  $\Gamma(x,a,y,b,z)$ is equal to
$$
(P^a_x - \id + \check P^b_z) (y)= \bigl\{ \eta' + (X + Z \circ \Beta_y^{a,x,b}) \eta' \mid \,
\eta' \in y \bigr\},
$$
that is, to  the (left) graph of the map
  $X + Z \circ \Beta_y^{a,x,b} : y \to b$.
\end{proof}

\begin{remark}
One may turn everything also the other way round: assume $(b,+)$ is a group,
$y$ a set, and let $G:=\Map(y,b)$. 
Assume given a map $\Beta :  G \to \Map(y,y)$, $X \mapsto \Beta^X$ such that $\Beta^0 = \id_y$,
and define a binary law on $G$ by
$$
X \cdot Z := X \cdot_B Z := X + Z \circ \Beta^X \ ,
$$
where $+$ denotes ``pointwise addition'' in $G$. 
It is straightforward to show that
this law is associative iff $\Beta$ becomes a homomorphism in the sense that
$$
\Beta^{X + Z \circ \Beta^X} = \Beta^X \circ \Beta^Z \ 
$$
(cf.\  \cite{Pi77}, p.\ 243, for a similar construction in the context of near-fields).
The neutral element is the zero map $0$, and
an element $X$ in $G$ is invertible iff $\Beta^X:y \to y$ is bijective, and then
its inverse is the ``quasi-inverse''
$$
X\inv:= - X \circ (\Beta^X)\inv .
$$
As a special case, all this works if $y$ and $b$ are  groups,
$A: b \to y$ a group homomorphism and $\Beta^X = \id_y+ A \circ X$, namely,
this is the affine picture of the following
\end{remark}

\begin{corollary}
Assume that $y$ is a subgroup commuting with $b$, and write $a=G_A$ with
a group homomorphism $A:b \to y$. Then Formula (\ref{AffineFormula})  reads
$$
\Gamma(G_X,G_A,y,b,G_Z) = G_{X + Z \circ (\id_y - A \circ X)} \ ,
$$
and $U_{ab} \to \Bij(y)^{op}$, $G_X \mapsto \id_y - AX$ is a group homomorphism.
\end{corollary}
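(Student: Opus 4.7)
The plan is to deduce both statements of the corollary as direct consequences of the ``affine picture'' formula (\ref{AffineFormula}) from Theorem \ref{PictureTheorem}, together with the explicit form of the canonical kernel supplied by Lemma \ref{KernelLemma2}.

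First I would verify that the hypotheses of Lemma \ref{KernelLemma2} are satisfied: since $y$ and $b$ are commuting subgroups and $y\top b$ (the transversality being implicit in $y\in{}^\top b$, which is part of the ambient setting of Theorem \ref{PictureTheorem}), $\Omega$ decomposes as the internal direct product $y\times b$ of groups. With $a=G_A$ for a group homomorphism $A\colon b\to y$, Lemma \ref{KernelLemma2} then yields
$$
\Beta_y^{a,x,b} \;=\; \id_y - A\circ X \colon y\to y .
$$
Substituting this into (\ref{AffineFormula}) immediately gives
$$
\Gamma(G_X,G_A,y,b,G_Z) \;=\; G_{X + Z\circ(\id_y - A\circ X)} ,
$$
which is the first assertion of the corollary.

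For the second assertion, set $\varphi(G_X) := \id_y - A\circ X$. By the final clause of Lemma \ref{KernelLemma2}, $\varphi(G_X)$ lies in $\Bij(y)$ precisely when $G_X\top G_A$, and this is part of the defining condition $G_X\in U_{ab}=a^\top\cap{}^\top b$, so $\varphi$ is a well-defined map $U_{ab}\to \Bij(y)$. To see that it is a group homomorphism into $\Bij(y)^{op}$, I would compute $\varphi(G_X\cdot_{a,y,b}G_Z)$ directly from the product formula just established and compare with $\varphi(G_Z)\circ\varphi(G_X)$; the identity follows from two elementary facts, namely that $A$ is a group morphism and therefore distributes over pointwise sums of maps with codomain $b$, and that composition is right-distributive over pointwise sums, $(F+G)\circ H = F\circ H + G\circ H$. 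Alternatively, the homomorphism identity can be read off from the general associativity characterization $\Beta^{X\cdot Z}=\Beta^X\circ\Beta^Z$ in the Remark preceding the corollary, specialized to $\Beta^X = \id_y - A\circ X$.

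I do not expect any genuine obstacle: all the structural work has already been done in Theorem \ref{PictureTheorem} and Lemma \ref{KernelLemma2}, and what remains is an essentially bookkeeping verification. The main point requiring care is the order of composition, which is exactly why the target is the \emph{opposite} group $\Bij(y)^{op}$ rather than $\Bij(y)$ itself.
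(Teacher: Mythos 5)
Your derivation of the displayed formula is exactly the paper's: substitute Lemma \ref{KernelLemma2} into (\ref{AffineFormula}). For the homomorphism property, however, you take a genuinely different route: the paper disposes of it in one line by appealing to Theorem \ref{KernelTheorem}, where the multiplicativity of $x\mapsto\Beta^{a,x,b}_y$ is a consequence of the operator identity $L_{xayb}\circ L_{x'ayb}=L_{\Gamma(x,a,y,b,x'),ayb}$, itself derived from para-associativity of $\Gamma$; your direct computation in the affine picture is more self-contained and makes the mechanism visible. It is essentially correct, but the two ``elementary facts'' you list do not quite suffice on their own: since $y$ need not be abelian, negating a pointwise sum reverses the order of the summands, $-\bigl(AX\eta + AZ\Beta^X\eta\bigr) = -AZ\Beta^X\eta - AX\eta$, and it is precisely this reversal that produces the \emph{opposite} composition order $\varphi(Z)\circ\varphi(X)$; if you write $\Beta^X=\id_y-A\circ X$ and manipulate without tracking this, the two sides of your identity differ by the commutator of $AX\eta$ and $AZ\Beta^X\eta$ in $y$, which vanishes only when the image of $A$ is central. (Keeping the kernel in the form $-A\circ X+\id_y$ and preserving the order of all terms makes the identity exact with no commutativity assumption.) Be equally careful with your proposed fallback: the Remark preceding the corollary states the compatibility as $\Beta^{X+Z\circ\Beta^X}=\Beta^X\circ\Beta^Z$, whereas what associativity of $X\cdot Z=X+Z\circ\Beta^X$ actually forces is $\Beta^{X\cdot Z}=\Beta^Z\circ\Beta^X$; quoting the Remark verbatim would land you in $\Bij(y)$ rather than $\Bij(y)^{op}$.
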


\begin{proof} 
Write (\ref{AffineFormula}), using that by
 Lemma \ref{KernelLemma2},
$\Beta^{a,X,b}_y = \id_y - A \circ X$. 
The homomorphism property follows from
Theorem \ref{KernelTheorem}.\footnote{
We have to use the opposite group structure on $\Bij(y)$ in order to be in keeping
with our convention on $U_{ab}$, cf.\ footnote 4}
\end{proof}

\begin{theorem}\label{DistributiveTheorem}
Let $(a,b)$ be a pair of subgroups of $\Omega$. Then we have the following
``left distributive  law'': for all $x,y \in U_{ab}$ and $u,v,w \in U_b$,
$$
(xy (uvw)_b )_{ab} = \bigl( (xyu)_{ab} (xyv)_{ab} (xyw)_{ab} )_{b} .
$$
In other words, left multiplications $L_{xayb}$ from $U_{ab}$ are automorphisms of the torsor
$U_{b}$.
Similarly,  right multiplications  from $\check U_{ba}$ are automorphisms of
the torsor $U_a\check{ }$.
\end{theorem}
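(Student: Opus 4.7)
The plan is to reduce the distributive law to a transparent pointwise calculation by translating both sides into the ``affine picture'' of Theorem \ref{PictureTheorem}. Since $y \in U_{ab}$ satisfies $y \top b$, Lemma \ref{GraphtransversalLemma} identifies ${ }^\top b = U_b$ with $\Map(y, b)$ via the graph bijection $F \mapsto G_F$. I will first verify that, under this identification, the unbalanced torsor law on $U_b$ corresponds to the pointwise torsor structure on $\Map(y, b)$: if $u = G_U$, $v = G_V$, $w = G_W$ with $U,V,W : y \to b$, then $(uvw)_b = G_{U - V + W}$. To see this, I will unpack the compact description \eqref{NonbalancedEquation}: the conditions $\xi = \omega+\beta$, $\zeta = \omega + \beta'$, $\eta = \omega + \beta' + \beta$ force $\xi, \eta, \zeta, \omega$ to share a common $y$-component $\eta_\omega$, and the remaining equation in $b$ solves uniquely to $\beta_\omega = U(\eta_\omega) - V(\eta_\omega) + W(\eta_\omega)$.

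With this translation in place, Theorem \ref{PictureTheorem} tells us that, writing $x = G_X$, the left multiplication $L_{xayb} : U_b \to U_b$ corresponds under the identification to the map $\Map(y,b) \to \Map(y,b)$, $Z \mapsto X + Z \circ \Beta$, where $\Beta := \Beta_y^{a,x,b} : y \to y$ is the canonical kernel. The distributive law then reduces to checking, for each $\eta \in y$, the pointwise identity
$$L_{xayb}(U-V+W)(\eta) = \bigl( L_{xayb}(U) - L_{xayb}(V) + L_{xayb}(W) \bigr)(\eta)$$
in the group $b$. Setting $p := X(\eta)$ and $u,v,w := U(\Beta\eta), V(\Beta\eta), W(\Beta\eta)$, this is the tautology $(p+u) - (p+v) + (p+w) = p + (u - v + w)$, which holds in any group by associativity and the cancellation $-p + p = 0$. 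Equivalently, left translation by $p$ is a group homomorphism on $b$, hence a torsor morphism; precomposition with $\Beta$ is trivially a torsor morphism of $\Map(y,b)$; and their composite $L_{xayb}$ is therefore a torsor automorphism of $U_b$.

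The symmetric statement about right multiplications from $\check U_{ba}$ acting on $U_a\check{ }$ will then follow at once by applying this same argument to the opposite group $\Omega^{opp}$ with the roles of $a$ and $b$ interchanged, using that $\Omega$-right multiplications are $\Omega^{opp}$-left multiplications and that the ``checked'' torsors $\check U_{ba}$ and $\check U_a$ are precisely the ordinary balanced and unbalanced structures built out of $\Omega^{opp}$.

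The main obstacle I expect is really confined to the first step: confirming that the graph bijection intertwines the set-theoretic ternary operation $\Sigma(b, u, v, w)$ with pointwise combination in $\Map(y,b)$. The non-commutativity of $\Omega$ is handled by the cancellation $-\eta_\omega + \eta_\omega = 0$, used both to extract the $b$-component of $\omega$ uniquely and, implicitly, to rewrite $(\eta + \beta) - (\eta + \beta') + (\eta + \beta'') = \eta + (\beta - \beta' + \beta'')$ in $\Omega$; this is the essential non-abelian-friendly ingredient that makes pointwise torsor structure descend to $\Map(y,b)$. Once this translation is clean, no further appeal to para-associativity from Theorem \ref{SemitorsorTheorem} is needed -- it is already packaged into Theorem \ref{PictureTheorem}.
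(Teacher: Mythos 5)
Your proposal is correct and follows essentially the same route as the paper's proof: both pass to the affine picture of Theorem \ref{PictureTheorem}, identify $U_b$ with $\Map(y,b)$ carrying the pointwise torsor structure (Theorem \ref{PointwisetorsorTheorem}), and reduce the claim to the identity $X + (U-V+W)\circ \Beta = (X+U\circ\Beta)-(X+V\circ\Beta)+(X+W\circ\Beta)$, with the dual statement obtained by passing to $\Omega^{opp}$. The only difference is that you re-derive the graph-to-pointwise translation from \eqref{NonbalancedEquation} rather than simply citing Theorem \ref{PointwisetorsorTheorem}, which is harmless.
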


\begin{proof}
Let $u,v,w \top b$, and denote by uppercase letters the
corresponding maps $y\to b$.
Then the law of the ``pointwise torsor'' $U_b$ is simply described by the pointwise
torsor structure $U-V+W$ of maps from $y$ to $b$ (Theorem \ref{PointwisetorsorTheorem}).
The claim now follows from Theorem \ref{PictureTheorem}:
\begin{eqnarray*}
X + (U-V+W)  \circ \Beta^x &=&
X + U\circ \Beta^x - V \circ \Beta^x + W \circ \Beta^x \cr
&=&
(X + U \circ \Beta^x) - (X + V \circ \Beta^x) + (X + W \circ \Beta^x) \, .
\end{eqnarray*}
The ``dual'' statement follows by replacing $\Omega$ by $\Omega^{opp}$.
\end{proof}  
 
In general, the law of $U_{ab}$ is {\em not} right distributive: 
the laws $xz=\Gamma(x,a,y,b,z)$ and
$x+z=\Sigma(b,x,y,z)$ define a {\em near-ring}, and not a ring
(cf.\ Definition \ref{NearringDefinition}).   

\section{Permutation symmetries}\label{Sec:Symmetry} 

We have already mentioned (remark after Lemma \ref{SymmetryLemma}) that
the structure map $\Gamma$ and the structure space $\Gamm$ have certain
invariance, or ``covariance'',  properties with respect to permutations.
We  start by a simple remark on torsors.

\begin{definition}
The {\em torsor graph} of a torsor $(G,(\quad) )$ is
$$
\bT:= \bT(G):=\bigl\{ (\xi,\eta,\zeta,\omega) \in G^4 \mid  \, \omega = (\xi \eta  \zeta)   \bigr\} .
$$
Using an additive notation,  the {\em torsor graph} of a group $(\Omega,+)$ is thus given by
$$
\bT= \bT(\Omega)=\bigl\{ (\xi,\eta,\zeta,\omega) \in \Omega^4 \mid  \, \omega = \xi - \eta + \zeta  \bigr\} .
$$
\end{definition}

\begin{lemma}\label{TorsorLemma}
The torsor graph of a torsor
is invariant under the Klein four-group generated by the two double-transpositions
$(12)(34)$ and $(13)(24)$.
\end{lemma}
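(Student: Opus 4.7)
The plan is to use that the Klein four-group is generated by the two double transpositions $(12)(34)$ and $(13)(24)$, so it suffices to check that each of these two permutations preserves the torsor graph $\bT$. Each such check reduces to a single ternary identity in $G$, which I would then establish by a short substitution using only the para-associative law (T1) and the idempotent law (T2); no additional machinery is needed, and in particular there is no need to first fix a basepoint and pass to the associated group.

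For the generator $(12)(34)$: given $(\xi,\eta,\zeta,\omega) \in \bT$, i.e.\ $\omega = (\xi\eta\zeta)$, the image tuple is $(\eta,\xi,\omega,\zeta)$, and I must establish $\zeta = (\eta\xi\omega)$. Substituting for $\omega$ yields $(\eta\xi(\xi\eta\zeta))$; I then invoke (T1) in the form $(xy(zuv)) = (x(uzy)v)$ with $(x,y,z,u,v) = (\eta,\xi,\xi,\eta,\zeta)$, obtaining $(\eta(\eta\xi\xi)\zeta)$, and two applications of (T2) (first $(\eta\xi\xi) = \eta$, then $(\eta\eta\zeta) = \zeta$) close the computation.

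For the generator $(13)(24)$: the image tuple of $(\xi,\eta,\zeta,\omega)$ is $(\zeta,\omega,\xi,\eta)$, so I must establish $\eta = (\zeta\omega\xi)$. Substituting gives $(\zeta(\xi\eta\zeta)\xi)$; applying (T1) in the form $(x(uzy)v) = (xy(zuv))$ with $(x,u,z,y,v) = (\zeta,\xi,\eta,\zeta,\xi)$ gives $(\zeta\zeta(\eta\xi\xi))$, and two applications of (T2) reduce this to $(\zeta\zeta\eta) = \eta$.

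The only real choice to be made is which instantiation of (T1) to perform: in each case one must match the inner triple of the substituted expression against one of the two bracketed triples appearing in (T1), with the reversed order $(uzy)$ in the middle form being the only mildly tricky point of bookkeeping. Once the correct substitution is selected the rest is automatic, so there is no substantive obstacle. Invariance under the third non-trivial element $(14)(23)$ of the Klein four-group then comes for free as the composition of the two verified symmetries.
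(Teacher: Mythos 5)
Your proof is correct: both instantiations of the para-associative law are matched properly (in the first case reading $(xy(zuv))=(x(uzy)v)$ left to right with $(x,y,z,u,v)=(\eta,\xi,\xi,\eta,\zeta)$, in the second reading it right to left), the two applications of (T2) close each computation, and invariance under $(14)(23)$ and the identity indeed comes for free since the Klein group is generated by the two checked involutions. The route is genuinely different from the paper's primary argument, though: the paper simply passes to additive notation, i.e.\ uses that every torsor is the torsor $(xyz)=x-y+z$ of a group once a basepoint is fixed, whereupon the equivalence of $\omega=\xi-\eta+\zeta$ with $\zeta=\eta-\xi+\omega$ and with $\eta=\zeta-\omega+\xi$ is a one-line group manipulation. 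The paper then remarks that an intrinsic, basepoint-free proof exists, phrased as the invertibility of the left multiplications $L_{xy}$ (with inverse $L_{yx}$) and of the middle multiplications $M_{xz}$ (with inverse $M_{zx}$), but defers the verification to Appendix~A of \cite{BeKi10}. Your argument is precisely that intrinsic verification carried out in full from the axioms (T1) and (T2); it buys self-containedness and avoids invoking the group picture, at the cost of the (T1)-bookkeeping you describe, whereas the paper's additive computation is shorter but implicitly relies on the correspondence between torsors and groups.
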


\begin{proof} In additive notation,
this follows immediately from the fact that the torsor equation 
$\omega = \xi - \eta + \zeta $
is equivalent to
$\eta = \zeta - \omega + \xi$ 
and to
$\zeta = \eta - \xi + \omega$.

For an intrinsic proof, without fixing a base point, note that
symmetry under $(13)(24)$ is equivalent so saying that the middle multiplication operators
$M_{xz}(y)=(xyz)$ are invertible with inverse $M_{zx}$, and 
symmetry under $(12)(34)$ is equivalent so saying that the left multiplication operators
$L_{xy}(z)=(xyz)$ are invertible with inverse $L_{yx}$ (cf.\, Appendix A of \cite{BeKi10}).
\end{proof}

Recall the definition of the structure space $\Gamm(\Omega)$ 
(Definition \ref{StructurespaceDefinition}) 
and the equivalent
versions of the structure equations (Lemma \ref{SymmetryLemma}).
Note that in System (\ref{EquationS3}) the ``torsor equation'' appears, hence
 the  ``$(\xi,\eta,\zeta,\omega)$-projection''
$$
\Gamm (\Omega) \to \bT(\Omega) , \quad
\bigl(\xi,\zeta;\alpha,\beta;\eta,\omega\bigr) \mapsto (\xi,\eta,\zeta,\omega)
$$
is well-defined.  Concerning other variables, the ``torsor equation'' also appears, modulo certain
sign changes. The relevant symmetry group here is 
 a subgroup $\bV$ of the permutation group $S_6$ playing a similar role as
the Klein four-group $V \subset S_4$ in the preceding lemma:

\begin{definition}\label{BigKleinDefinition}
The {\em Big Klein group} is the subgroup $\bV$ of permutations  $\sigma \in A_6$, 
 acting on six letters $\{ \alpha,\beta,\xi,\zeta,\eta,\omega\}$, and preserving the partition
$$
\{ \alpha,\beta,\xi,\zeta,\eta,\omega \} = 
A_1 \cup A_2 \cup A_3, \qquad
A_1:= \{ \alpha,\beta \}, \
A_2:= \{ \xi,\zeta \}, \
A_3 := \{ \eta,\omega \} ,
$$
i.e., for $i=1,2,3$, there is $i' \in \{ 1,2,3 \}$ with
$\sigma (A_i) = A_{i'}$.
\end{definition}

\begin{lemma}\label{BigKleinLemma}
The  Big Klein group $\bV$ is  isomorphic to $S_4$, and  its  action on six letters
 is equivalent to the natural action of $S_4$ on the set $K$ of all two-element subsets 
of $\{1,2,3,4\}$.
\end{lemma}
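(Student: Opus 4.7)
The plan is to produce an explicit $S_4$-equivariant bijection between the six letters $\{\alpha,\beta,\xi,\zeta,\eta,\omega\}$ and the six elements of $K$, and then show that the resulting homomorphism $\rho\colon S_4\to\mathrm{Sym}(K)\cong S_6$ is an isomorphism onto $\bV$. The crucial observation is that $K$ itself carries a natural partition into three pairs of complementary $2$-subsets, namely $\{\{1,2\},\{3,4\}\}$, $\{\{1,3\},\{2,4\}\}$, and $\{\{1,4\},\{2,3\}\}$. I would set up the bijection so that these three complementary pairs correspond respectively to $A_1$, $A_2$, $A_3$ (for instance $\alpha\leftrightarrow\{1,2\}$, $\beta\leftrightarrow\{3,4\}$, $\xi\leftrightarrow\{1,3\}$, $\zeta\leftrightarrow\{2,4\}$, $\eta\leftrightarrow\{1,4\}$, $\omega\leftrightarrow\{2,3\}$). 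Since $\sigma(S)^c=\sigma(S^c)$ for every $\sigma\in S_4$, the induced action on $K$ is complement-equivariant, hence preserves this partition, so $\rho$ lands in the wreath-product subgroup $W:=S_2\wr S_3\subset S_6$ of order $48$.

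Next I would verify that $\rho(S_4)\subseteq A_6$ and hence $\rho(S_4)\subseteq\bV=W\cap A_6$. Since transpositions generate $S_4$, it suffices to check this on a transposition $(ij)$: such $(ij)$ fixes each of the two sets in the complementary pair $\{\{i,j\},\{k,l\}\}$ and, on each of the other two complementary pairs, swaps its two elements, so $\rho((ij))$ is a product of two disjoint transpositions, which is even. Injectivity of $\rho$ is immediate: if $\sigma\in S_4$ fixes every $2$-subset setwise, then $\{\sigma(1)\}=\{1,2\}\cap\{1,3\}\cap\{1,4\}=\{1\}$, so $\sigma(1)=1$ and by symmetry $\sigma=\mathrm{id}$. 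To conclude that $\rho$ is an isomorphism onto $\bV$, it remains to check that $|\bV|=24=|S_4|$: the sign homomorphism restricted to $W$ is surjective (any single within-pair swap is an odd transposition), so $\bV$ has index $2$ in $W$. The equivalence of the two $S_4$-actions on six letters is then tautological from the construction.

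The only subtle point, and the step I would watch most carefully, is the parity bookkeeping that yields $|\bV|=24$: one must notice that the ``diagonal'' lift $S_3\hookrightarrow W$ permuting the three pairs blockwise without any within-pair swap already lands in $A_6$ (a blockwise $2$-cycle is a product of two disjoint transpositions; a blockwise $3$-cycle is a product of two disjoint $3$-cycles), so the parity of an element $(s_1,s_2,s_3;\sigma)\in W$ equals $(-1)^{s_1+s_2+s_3}$, independently of $\sigma$. This is what forces the index-two splitting and matches the short exact sequence $1\to V\to\bV\to S_3\to 1$ with the familiar $1\to V\to S_4\to S_3\to 1$, confirming the isomorphism intrinsically.
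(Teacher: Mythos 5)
Your proof is correct and follows essentially the same route as the paper: the same explicit correspondence between the six letters and the two-element subsets of $\{1,2,3,4\}$, the same injectivity check, and a cardinality count showing $\vert \bV\vert = 24$. The only cosmetic difference is how that count is obtained — you use the index-two argument in the order-$48$ wreath product $S_2\wr S_3$ via the sign character, while the paper exhibits the kernel of $\bV\to S_3$ as a Klein four-group together with a section $S_3\to\bV$; your closing remark about the sequence $1\to V\to\bV\to S_3\to 1$ shows these are two views of the same computation.
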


\begin{proof}
We fix the following correspondence between our six letters and $K$:
$$
\alpha = \{ 1,2 \}, \
\beta = \{ 3,4 \}, \
\xi = \{ 1,3 \}, \
\zeta = \{ 2,4 \}, \
\eta = \{ 1,4 \}, \
\omega = \{ 2,3 \} .
$$
The natural action of $S_4$ induces a homomorphism
$S_4 \to S_6$, letting act $S_4$ on the six letters $\alpha,\ldots,\omega$.
This homomorphism is obviously injective, and its image belongs to $\bV$
(note that each transposition from $S_4$ acts
by a double-transposition of these six letters, hence the image belongs to $A_6$).
Let us prove that the homomorphism is surjective:
from the very definition of $\bV$ we get a homomorphism
$\bV \to S_3$, sending $\sigma$ to the permutation $i \mapsto i'$.
The kernel of this homomorphism is a Klein four-group, and
one easily constructs a section $S_3 \to \bV$, so that 
$\vert \bV \vert = 24 = \vert S_4 \vert$, whence the claim. 
\end{proof}

\begin{definition} 
A vector $s=(s_1,\ldots,s_6)$ with $s_i \in \{ \pm 1 \}$ will be called a {\em sign vector}.
Given a sign vector $s$,  the subspace
$$
\Gamm^{s} :=
\Bigsetof{ (\xi,\zeta; \alpha,\beta; \eta,\omega) \in \Omega^6}
{ \bigl(s_1 \xi,s_2 \zeta; s_3 \alpha,s_4 \beta; s_5 \eta, s_6 \omega \bigr) \in \Gamm   } 
$$
is called a {\em signed version of the structure space}. 
\end{definition}

\nin 
Since $\Omega \to \Omega$, $v \mapsto -v$ is an antiautomorphism of $\Omega$, we see that the 
opposite structure space is a signed version of $\Gamm$, namely
$$
\check \Gamm = \Gamm^{(-1,-1;-1,-1;-1,-1)}  \, .
$$

\begin{theorem}\label{BigKleinTheorem}
The Big Klein group transforms $\Gamm$ into signed versions of $\Gamm$:
for each $\sigma \in \bV$ there exists a sign vector $s=s(\sigma)$
such that
$\sigma . \Gamm = \Gamm^{s(\sigma)}$.
More precisely, we have the following table of elements
$\sigma \in \bV$ (given together with their corresponding element in $S_4$, under the
isomorphism from Lemma \ref{BigKleinLemma}) and corresponding
sign vectors $s(\sigma)$:

\msk
\nin
\begin{tabular}{l | l | l}
element of $V\subset S_4$ & corresponding element $\sigma \in \bV$ & 
corresponding sign vector  $s(\sigma)$ \cr
\hline
$\id$ & $\id$ & $(1,1;1,1;1,1)$
\cr 
$(12)(34)$
&$(\xi \zeta)(\eta \omega)$
& $(1,1;-1,-1;1,1)$
\cr
$(13)(24)$ &
$(\alpha \beta)(\eta \omega)$&
$(-1,-1;1,1;-1,-1)$
\cr
$(14)(23)$ &
$(\alpha \beta)(\xi\zeta)$ &
$(-1,-1;-1,-1;-1,-1)$
\end{tabular}

\msk
\nin
\begin{tabular}{l | l | l}
element of $A_4 \setminus V$ & corresponding element $\sigma \in \bV$ & 
corresponding sign vector $s(\sigma)$ \cr
\hline
$(123)$
&$(\alpha \omega \xi)(\beta\eta \zeta)$
& $(1,-1;-1,1;-1,-1)$
\cr$(132)$
&$(\alpha \xi \omega)(\beta\zeta \eta)$
& $(-1,1;-1,-1;-1,1)$
\cr
$(124)$ &
$(\alpha \zeta \eta)(\beta \xi \omega)$ &
$(-1,1;1,1;1,-1)$
\cr
$(142)$ &
$(\alpha \eta \zeta)(\beta \omega \xi)$ &
$(-1,1;1,-1;1,1)$
\cr
$(134)$ & $(\alpha \omega \zeta)(\beta  \eta \xi)$ &
$(1,-1;-1,1;1;1)$
\cr
$(143)$ & $(\alpha  \zeta \omega)(\beta \xi \eta)$ &
$(1,-1;1,1;1,-1)$
\cr
$(234)$ & $(\alpha \xi \eta)(\beta \zeta \omega)$ &
$(1,-1;-1,-1;-1,1)$
\cr
$(243)$ & $(\alpha \eta \xi)(\beta \omega \zeta)$ &
$(-1,1;1,-1;-1,-1)$
\end{tabular}

\msk
\nin
\begin{tabular}{l | l | l}
transposition in $S_4$ & corresponding element $\sigma \in \bV$ & 
corresponding sign vector $s(\sigma)$ \cr
\hline
$(12)$ & $(\xi \omega)(\zeta \eta)$ & $(1,1;1,-1;1,1)$ 
\cr
$(13)$ & $(\alpha \omega)(\beta \eta)$ & $(1,-1;1,-1;1,-1)$ 
\cr
$(14)$ & $(\alpha \zeta)(\beta \xi)$ & $(1,1;1,1;1,-1)$
\cr
$(23)$ & $(\alpha \xi)(\beta\zeta)$ & $(-1,-1;-1,-1;-1,1)$
\cr
$(24)$ & $(\alpha \eta)(\beta\omega)$& $(-1,1;1,-1;1,-1)$
\cr
$(34)$ & $(\xi \eta)(\zeta \omega)$& $(1,1;-1,1;1,1)$
\end{tabular}

\msk 
\nin
\begin{tabular}{l | l | l}
elt.\  of order $4$ in $S_4$ & corresponding element $\sigma \in \bV$ & 
corresponding sign vector $s(\sigma)$ \cr
\hline
$(1234)$ & $(\alpha \omega \beta \eta)(\xi \zeta)$&
$( 1,-1;-1,1;-1,1)$
\cr
$(1243)$ & $(\alpha \zeta \beta \xi) (\eta \omega)$&
$( -1,-1;1,1;1,-1)$
\cr
$(1324)$& $(\alpha \beta) (\xi \omega \zeta \eta)$&
$(-1,-1;1,-1;-1,-1)$
\cr
$(1342)$& $(\alpha \xi \beta \zeta)(\eta \omega)$&
$( 1,1;-1,-1;-1,1)$
\cr
$(1423)$
& $(\alpha \beta)(\xi \eta \zeta \omega)$& 
$(-1,-1;-1,1;-1,-1)$
\cr
$(1432)$
& $(\alpha \eta \beta \omega)(\xi \zeta)$& 
$(-1,1;1,-1;-1,1)$
\end{tabular}
\end{theorem}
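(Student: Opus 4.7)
My strategy is to verify the assignment $\sigma \mapsto s(\sigma)$ on a small generating set of $\bV$ and extend to the whole group by a cocycle identity. First, I observe that signed versions of $\Gamm$ behave functorially under permutations: writing $\sigma.s$ for the sign vector $s \in \{\pm 1\}^6$ componentwise-permuted by $\sigma \in \bV$, a direct manipulation of the definition of $\Gamm^s$ shows that whenever $\sigma.\Gamm = \Gamm^{s(\sigma)}$, one has $\sigma.\Gamm^s = \Gamm^{s(\sigma)\cdot \sigma.s}$. Combining this with $(\sigma\tau).\Gamm = \sigma.(\tau.\Gamm)$ yields the cocycle identity $s(\sigma\tau) = s(\sigma) \cdot \sigma.s(\tau)$. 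Hence once the theorem is verified on a generating set of $\bV$, its values on all other elements are determined and can be read off systematically from the table.

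Via the isomorphism $\bV \cong S_4$ of Lemma \ref{BigKleinLemma}, $\bV$ is generated by the three elements $(\xi\omega)(\zeta\eta)$, $(\alpha\xi)(\beta\zeta)$, $(\xi\eta)(\zeta\omega)$, corresponding respectively to the adjacent transpositions $(12), (23), (34)$ of $S_4$. For each such $\sigma$, I apply $\sigma$ to a generic tuple $(\xi, \zeta; \alpha, \beta; \eta, \omega) \in \Gamm$ (using the convention $(\sigma.v)(i) = v(\sigma\inv(i))$), substitute into the three structure equations, and compare the resulting system to the signed structure equations for the sign vector $s(\sigma)$ asserted by the table. Each comparison is a short algebraic manipulation: for example, $(\xi\eta)(\zeta\omega)$ transforms the structure equations into $\xi = \alpha + \zeta + \beta$, $\omega = \alpha + \zeta$, $\eta = \zeta + \beta$, and eliminating $\zeta$ via the second equation yields $\zeta = -\alpha + \omega$, $\eta = -\alpha + \omega + \beta$, $\xi = \omega + \beta$, which is precisely the system defining $\Gamm^{(1,1;-1,1;1,1)}$. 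The other two generators are treated analogously.

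The main obstacle is the bookkeeping of signs and orderings in the non-abelian setting: because $-\omega + \alpha$ is genuinely different from $\alpha - \omega$, every negation in a signed system must be performed on the correct side, and every rewrite must respect the original order of summands. The equivalent systems in Lemma \ref{SymmetryLemma} serve as a complete catalogue of the admissible rewrites of the structure equations, so the verification on each generator reduces to identifying the relevant catalogued form and matching the signs term by term. With the three generators secured, the cocycle identity $s(\sigma\tau) = s(\sigma) \cdot \sigma.s(\tau)$ propagates the result to the remaining $21$ elements of $\bV$, reproducing the full table.
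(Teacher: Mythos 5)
Your proposal is correct, but it organizes the verification differently from the paper. The paper's proof is a brute-force check: for each of the $24$ elements $\sigma \in \bV$ one substitutes variables according to $\sigma$ in one of the equivalent systems of Lemma \ref{SymmetryLemma}, recognizes the result as another catalogued system up to signs, and reads off $s(\sigma)$ directly. You instead verify only the three generators corresponding to the adjacent transpositions $(12),(23),(34)$ of $S_4$ and propagate via the crossed-homomorphism identity $s(\sigma\tau)=s(\sigma)\cdot\sigma.s(\tau)$. That identity is sound: since sign application is componentwise negation and the permutation action is componentwise relabeling, one checks $s\cdot(\sigma\inv.w)=\sigma\inv.\bigl((\sigma.s)\cdot w\bigr)$ with no interference from the non-commutativity of $\Omega$, and your sample computation for $(\xi\eta)(\zeta\omega)$ correctly recovers $(1,1;-1,1;1,1)$. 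What your route buys is conceptual economy and a built-in consistency check on the table (any entry failing the cocycle relation would signal an error); what it costs is that reproducing the \emph{specific} $21$ remaining table entries still requires evaluating $21$ twisted products of sign vectors, so the total bookkeeping is comparable, and the existence statement alone becomes genuinely shorter. One small point worth making explicit if you write this up: the cocycle identity presupposes a chosen value $s(\sigma)$ for each factor, and since the paper observes that distinct sign vectors give distinct spaces $\Gamm^s$ for non-commutative $\Omega$ (outside degenerate cases), the propagated values are in fact forced, so your argument determines the table rather than merely producing one consistent with it.
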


\begin{proof}
The proof is by direct computation:
take any of the systems from (\ref{EquationS2}) -- (\ref{EquationS7}) in 
Lemma \ref{SymmetryLemma},
replace variables according to $\sigma$;
the system thus obtained agrees, up to sign changes, with some other among
the systems from (\ref{EquationS2}) -- (\ref{EquationS7}), 
and, by comparing, one can read off the sign vector.
\end{proof}

A glance at the tables shows that elements from $A_4$ induce an even number
of sign changes, and elements of $S_4 \setminus A_4$ an odd number of sign
changes. The sign vectors given in the tables form an $S_4$-torsor under the induced action 
of $\bV$. If $\Omega$ is non-commutative, then different sign vectors $s$ give
rise to different spaces $\Gamm^s$; if $\Omega$ is commutative, then
$\Gamm = \check \Gamm$, but (except for some degenerate examples) this is the
only case in which two signed versions of the structure space coincide.
If, in the abelian case, we work only with {\em linear} subspaces (subgroups),
then we may ignore all sign changes, and hence all 24 permutations can
be considered as equivalent. 
In the general case,
the following statements also follow by direct inspection from the tables.

\begin{definition}
A subset $x$ in a group $(\Omega,+)$ is a
{\em symmetric subset} if $-x=x$.
\end{definition}

\begin{theorem}\label{SecondSymmetryTheorem}
For any group $(\Omega,+)$ and $(x,a,y,b,z) \in \cP^5$, we have:
\begin{enumerate}
\item (behavior of $\Gamma$ under the Klein group $\{ \id, (14),(14)(23),(23) \} \subset S_4$):

$\Gamma(b,z,y,x,a)=-\Gamma(x,a,y,b,z)$ 

$\Gamma(z,b,y,a,x)=\check \Gamma(x,a,y,b,z)$

$\Gamma(a,x,y,z,b) = - \check \Gamma(x,a,y,b,z)$,

and if $x,a,y,b,z$ are symmetric subsets, then
$ \Gamma(x,a,y,b,z)=\Gamma(a,x,y,z,b)$.
\item
If we consider sign changes in $\alpha$ or $\beta$ as  negligible (and the
the corresponding signed versions as ``equivalent''), then the equivalence class
of $\Gamm$ is invariant under  the Klein group $\{ \id, (12)(34),(12)(34)\} \subset S_4$.
\end{enumerate}
\end{theorem}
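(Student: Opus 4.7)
My plan is to derive the entire statement from the table in Theorem~\ref{BigKleinTheorem}, together with a short dictionary that turns ``$\sigma \in \bV$ acts on $\Gamm$ by a sign vector $s(\sigma)$'' into identities between $\Gamma$ on various permuted arguments. The key observation is that in
\[
\Gamma(x,a,y,b,z) = \{\omega \in \Omega \mid \exists\, \xi \in x,\ \alpha \in a,\ \eta \in y,\ \beta \in b,\ \zeta \in z : (\xi,\zeta;\alpha,\beta;\eta,\omega) \in \Gamm\},
\]
the variable $\omega$ plays the role of \emph{output}, while the other five are existentially quantified \emph{inputs} bound to the sets $x,a,y,b,z$. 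A permutation $\sigma \in \bV$ that fixes $\omega$ therefore yields an identity between $\Gamma$ on permuted arguments and $\Gamma$ on the original ones: an input sign $s_i = -1$ means the corresponding set is replaced by its negative $-x,\,-a,\ldots$ (trivial on symmetric subsets), an output sign $s_6 = -1$ produces an overall negation of the answer, and the all-negative sign vector switches $\Omega$ to $\Omega^{opp}$ and hence $\Gamma$ to $\check\Gamma$.

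For part (1), I would first identify the stabiliser of $\omega$ in $\bV$: reading the table and applying Lemma~\ref{BigKleinLemma}, this is $\{\id,\,(\alpha\zeta)(\beta\xi),\,(\alpha\xi)(\beta\zeta),\,(\alpha\beta)(\xi\zeta)\}$, corresponding to the Klein group $\{\id,(14),(23),(14)(23)\} \subset S_4$ stabilising $\omega = \{2,3\}$ --- exactly the Klein group mentioned in the statement. Each of its three non-trivial elements then gives, via the dictionary, one of the claimed identities: $(\alpha\zeta)(\beta\xi)$ with $s=(1,1;1,1;1,-1)$ yields $\Gamma(b,z,y,x,a) = -\Gamma(x,a,y,b,z)$; $(\alpha\beta)(\xi\zeta)$ with $s = (-1,-1;-1,-1;-1,-1)$ yields $\Gamma(z,b,y,a,x) = \check\Gamma(x,a,y,b,z)$ (already relation~\eqref{Sym1}); and $(\alpha\xi)(\beta\zeta)$ with $s = (-1,-1;-1,-1;-1,1)$ yields $\Gamma(a,x,y,z,b) = -\check\Gamma(x,a,y,b,z)$. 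For the symmetric-subset assertion, I would note that $\check\Gamma(x,a,y,b,z) = -\Gamma(-x,-a,-y,-b,-z)$ follows directly from the fact that $v \mapsto -v$ is an antiisomorphism $\Omega \to \Omega^{opp}$; when all five sets are symmetric this collapses to $-\Gamma(x,a,y,b,z)$, and plugging into the third identity gives $\Gamma(a,x,y,z,b) = \Gamma(x,a,y,b,z)$.

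For part (2), I would scan the table for those $\sigma \in \bV$ whose sign vector has $s_1 = s_2 = s_5 = s_6 = 1$, i.e.\ sign flips occur only in the $\alpha,\beta$ positions. Inspection yields exactly $\id$, $(\xi\omega)(\zeta\eta)$, $(\xi\eta)(\zeta\omega)$, $(\xi\zeta)(\eta\omega)$, which under $\bV \cong S_4$ correspond to $\id,(12),(34),(12)(34)$ --- a Klein four-group, evidently the one intended (the printed ``$\{\id,(12)(34),(12)(34)\}$'' being a typographical slip for $\{\id,(12),(34),(12)(34)\}$). By construction they preserve $\Gamm$ up to the allowed sign changes on $\alpha$ and $\beta$. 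The only real obstacle throughout is bookkeeping: setting up the dictionary between permutations of the six-letter alphabet $\{\xi,\zeta,\alpha,\beta,\eta,\omega\}$, the five-slot argument list of $\Gamma$, and the corresponding sign flips and $\Gamma \leftrightarrow \check\Gamma$ swap. Once this dictionary is in place, no further calculation beyond reading Theorem~\ref{BigKleinTheorem} is needed.
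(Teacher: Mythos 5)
Your proposal is correct and takes essentially the same route as the paper, whose entire proof is the remark that the statements ``follow by direct inspection from the tables'' of Theorem \ref{BigKleinTheorem}. Your dictionary (input signs negate the corresponding sets, the sign on $\omega$ negates the result, the all-negative vector converts $\Gamma$ to $\check\Gamma$), your identification of the stabiliser of $\omega$ in $\bV$ with the Klein group $\{\id,(14),(23),(14)(23)\}$, your reading of the misprinted group in (2) as $\{\id,(12),(34),(12)(34)\}$, and the three resulting identities (together with the reduction of the symmetric-subset claim via $\check\Gamma(x,a,y,b,z)=-\Gamma(-x,-a,-y,-b,-z)$) all check out against the table.
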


\begin{remark}
Following   \cite{BeKi10}, Theorem 2.11 and Remark 2.12, the second item may be
reformulated in another way:
invariance under $(12)$, that is, under $(\xi \omega)(\zeta \eta)$, amounts to 
the fact that the inverse of the relation ${\bf l}_{xayb} \subset \Omega^2$ 
(which, as in \cite{BeKi10}, generalizes the operator $L_{xayb}$ for arbitrary
$(x,a,y,b) \in \cP^4$) is given by the relation ${\bf l}_{yaxb}$.
Similarly, invariance under $(34)$ amounts to the analog  for right translations,
and invariance under $(12)(34)$ to the analog for middle multiplication operators:
$$
({\bf l}_{xayb})\inv = {\bf l}_{yaxb} , \qquad
({\bf r}_{aybz})\inv = {\bf r}_{azby}, \qquad
({\bf m}_{xabz})\inv = {\bf m}_{zabx} \, .
$$
Note that this is the exact analog of Lemma \ref{TorsorLemma}; however, since $a$, $b$
need not be transversal subgroups, these
relations now apply to {\em semitorsors} as well. In a certain sense, this means that
our semitorsors have the same ``symmetry type as a torsor'' --
a property which certainly distinguishes them from ``arbitrary'' semitorsors.
\end{remark}

\section{Generalized lattice structures}\label{Sec:Lattice}

For abelian groups, Theorem 2.4 of \cite{BeKi10} establishes a close link between the 
structure map $\Gamma$
and the {\em lattice of subgroups}. For non-abelian groups $\Omega$, 
the subgroups form no longer a lattice. Nevertheless, the two set-theoretic operations
$$
x \land y :=x \cap y, \qquad
x  +  y := \{ \omega \in \Omega \mid \, \exists \xi \in x, \,  \exists \eta \in y : \,
\omega = \xi + \eta \} 
$$
behave very much like ``meet'' ($\land)$ and ``join'' ($\lor$), as shows the following analog
of Theorem 2.4 of \cite{BeKi10}:

\begin{theorem} \label{LatticeTheorem}
Let $x,a,y,b,z$ be subgroups of $\Omega$. Then we have:
\begin{enumerate}[label=\emph{(}\arabic*\emph{)},leftmargin=*]
\item values of $\Gamma$ on the ``diagonal $x=y$'': 
\begin{eqnarray*}
\Gamma(x,a,x,b,z) &=&
\Bigl( \bigl(x \land a\bigr) +  z\Bigr) \land (x +  b) \cr
&=&
(x \land a) +  \Bigl(z \land (x +  b) \Bigr),
\end{eqnarray*}
\item
values of $\Gamma$ on the ``diagonal $a=z$'': 
\begin{eqnarray*}
\Gamma(x,a,y,b,a)&=& \Bigl( \bigl(x \land(a +  y) \bigr)+  b\Bigr) \land a \cr
&=& \Bigl(x +  \bigl( (y +  a)\land b\bigr) \Bigr) \land a \ ,
\end{eqnarray*}
\item
values of $\Gamma$ on the ``diagonal $b=z$'': 
\begin{eqnarray*}
\Gamma(x,a,y,b,b)&=& \Bigr( \bigl( a +   (y \land b) \bigr) \land x \Bigr) +  b \cr
&=&
\Bigl(a \land \bigl( x+  (y \land b) \bigr) \Bigr) +  b \ .
\end{eqnarray*}
\end{enumerate}
This implies, in particular, that for all $x,a,y,b \in \Gras(\Omega)$,
\begin{eqnarray*}
\Gamma(x,a,x,b,x) &=& x \cr
\Gamma(a,a,y,b,b) &=&a +  b \cr
\Gamma(b,a,y,b,a) &=&a \land b \ .
\end{eqnarray*}
\end{theorem}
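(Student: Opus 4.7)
The plan is to exploit the various equivalent formulations of the structure equations compiled in Lemma \ref{SymmetryLemma}: for each collapsed-diagonal identity I will pick the system in which the two subgroups that happen to coincide force one of the auxiliary variables into a lattice-like intersection. For (1), when $x=y$, the reformulation (\ref{2.3}) gives $\alpha=\eta-\xi$ with $\eta,\xi\in x$, so $\alpha\in x\land a$ (using that $x$ is a subgroup). Combining with $\omega=-\alpha+\zeta$ and $\omega=\xi-\beta$ (both of which use that $a$ and $b$ are closed under inversion) gives $\omega\in\bigl((x\land a)+z\bigr)\land(x+b)$, proving the forward inclusion of the first equality. The second equality is obtained by regrouping: from $\zeta=\alpha+\omega=\alpha+\xi-\beta$ one reads $\zeta\in z\land(x+b)$, whence $\omega=-\alpha+\zeta\in(x\land a)+\bigl(z\land(x+b)\bigr)$. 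The reverse inclusions are obtained by constructing explicit witnesses $(\xi,\alpha,\eta,\beta,\zeta)$: if $\omega=v+\zeta$ with $v\in x\land a$, $\zeta\in z$ and simultaneously $\omega=\xi'+\beta'$ with $\xi'\in x$, $\beta'\in b$, then $\alpha:=-v$, $\beta:=-\beta'$, $\xi:=\xi'$, $\eta:=-v+\xi'$ satisfy all three structure equations and $\eta\in x=y$ (again because $x$ is a subgroup).

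Parts (2) and (3) are handled by the same mechanism with different equation choices. For $a=z$ I would use (\ref{EquationS4}) or (\ref{EquationS6}), where two readings of $\alpha$ force $\alpha\in a\land(\text{one of the neighbouring sums})$. For $b=z$ I would use (\ref{EquationS5}) or (\ref{EquationS7}) symmetrically; alternatively, (3) can be derived from (2) via the symmetry relation (\ref{Sym1}), which exchanges $a\leftrightarrow b$ and $\Gamma\leftrightarrow\check\Gamma$. In every case the subgroup hypothesis on $a,b,x,y,z$ is invoked precisely to justify the replacements $-\alpha\in a$, $-\beta\in b$, and sums such as $-v+\xi'\in x$ that translate between the different forms of the equations.

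The three triple-diagonal consequences then follow by substitution and absorption. For $\Gamma(x,a,x,b,x)$ I apply (1) with $z=x$: $(x\land a)+x=x$ because $x\land a\subseteq x$ and $x$ is a subgroup, and $x\land(x+b)=x$ because $o\in b$ forces $x\subseteq x+b$; both sides collapse to $x$. For $\Gamma(a,a,y,b,b)$ I apply (3) with $x=a$: since $o\in y\land b$ we have $a\subseteq a+(y\land b)$, so $\bigl(a+(y\land b)\bigr)\land a=a$ and the outer $+b$ yields $a+b$. For $\Gamma(b,a,y,b,a)$ I apply (2) with $x=b$: $b\land(a+y)\subseteq b$ and $b$ a subgroup give $\bigl(b\land(a+y)\bigr)+b=b$, and intersection with $a$ gives $a\land b$.

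The main obstacle is bookkeeping rather than any genuinely new idea. In the non-abelian setting the sums $(x\land a)+z$ and $z+(x\land a)$ are generally different subsets of $\Omega$, so each step of the $\Z$-linear argument of \cite{BeKi10}, Theorem~2.4 must be performed with the order of summation fixed by whichever form of the structure equations one has chosen. Once the correct system from Lemma \ref{SymmetryLemma} is selected for each of the three diagonals, every individual verification is short.
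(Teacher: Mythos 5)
Your plan follows essentially the same route as the paper: both inclusions are verified directly from the equivalent forms of the structure equations in Lemma \ref{SymmetryLemma} (the paper uses System (\ref{EquationS3}), you use (\ref{2.3}) together with the same two relations $\omega=-\alpha+\zeta$ and $\omega=\xi-\beta$ — the witness construction for the converse is identical), and the three special values follow from the absorption laws exactly as you describe. One aside is incorrect, however: the symmetry relation (\ref{Sym1}) exchanges $a\leftrightarrow b$ \emph{and} $x\leftrightarrow z$ simultaneously, so it sends the diagonal $b=z$ of $\Gamma$ to the diagonal ``first argument $=$ second argument'' of $\check\Gamma$ (namely $\check\Gamma(b,b,y,a,x)$), not to the diagonal $a=z$; hence (3) does not follow from (2) by this shortcut, and you must use your primary suggestion of verifying (3) directly from (\ref{EquationS5}) or (\ref{EquationS7}), which is what the paper implicitly does when it declares the remaining cases ``similar''.
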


\begin{proof}
The technical details of the the proofs
are exactly  as in \cite{BeKi10}, loc.\ cit., 
by respecting the possible non-commutativity of $\Omega$ (and hence of the operation
$+ $), so let us here
 only prove the first equality from item (1).  As in loc.\ cit.\, 
 it is always understood that $\alpha \in a$, $\xi \in x$, $\beta \in b$,
$\eta \in y$, $\zeta \in z$.
We use System (\ref{EquationS3}).

\ssk
Let $\omega \in \Gamma(x,a,x,b,z)$, then
$\omega = \xi - \beta$, hence $\omega \in (x +  b)$,
and
$\omega = \xi - \eta  + \zeta$ with
$v:=\omega - \zeta =  \xi - \eta  \in x$ (since $x=y$).
On the other hand,
$v=\omega - \zeta = -\alpha \in a$, whence
$\omega = v + \zeta$ with $v \in (x \land a)$, proving one inclusion.
(Note that we have used that $x$ and $a$ are subgroups and that $b$ is symmetric.)

\ssk
Conversely, let $\omega \in ( (x \land a) +  z) \land (x +  b)$.
Then
$\omega =\xi +  \beta  = \alpha + \zeta$
with $\alpha \in (x \land a)$.
Let $\eta := - \alpha + \xi$. Then $\eta \in x$, and
$\omega = \xi + \beta =  \alpha + \eta +  \beta$, hence
$\omega \in\Gamma(x,a,x,b,z)$.

The remaining proofs are similar. Note that,
since the systems in (\ref{EquationS2})--(\ref{EquationS8}) come in pairs,
there are in fact two different expressions for one diagonal value. 
For the final conclusion, one uses the ``absorption laws'' in the following form:

\begin{lemma} Recall that $\cP^o = \{ x \in \cP(\Omega) \mid o \in x \}$.
\begin{enumerate}
\item
If $y \in \cP^o$, then  $x\land (x +  y)=x$.
\item
If $y \in \cP^o$ and $x \in \Gras(\Omega)$,  then 
  $x +  (x \land y)=x=(x \land y) +  x$.
\end{enumerate}
 \end{lemma}
\end{proof}

\begin{remark}
It is remarkable that for subgroups $x,a,z$ a ``non-commutative modular law'' is
still satisfied (cf. Remark 2.5 of \cite{BeKi10}): by letting  $b=x$  in (1), we get
$$
((x \land a) +  z) \land x   = 
(x \land a) +  (z \land x) \ ,
$$
and letting $a=x$ we get the ``dual modular law''
$$
x +  (z \land (x +  b)) =(x+  z) \land (x +  b) \, .
$$
More generally, one has the impression that the formulas express some sort of ``duality'' between
the operations $\land$ and $+ $. This would be rather mysterious since, at a first glance,
the definitions of $\land$ and $+ $ look quite ``non-symmetric''. 
\end{remark}

\section{Final remarks} 

Since {\em groups} and {\em projective spaces} are 
foundational concepts in mathematics, the present approach is likely to interact
with  many areas of mathematics. The author's original motivation came from 
{\em non-associative algebra}, and in particular {\em Jordan theory} --  this domain
does not really belong to today's mathematical mainstream, and thus
the shape of the approach
presented here may seem quite unusual for a ``normal user of group theory''.
The reader may find more motivation and general remarks in the introductory and
concluding sections of preceding work, e.g., \cite{BeKi10, BeKi10b, BeNe05, BeL08, 
Be02}.

\ssk
In the following, let us add some short comments on aspects for which
the generalization of the framework to general groups and
general torsors of mappings may be relevant. 
Most importantly, one needs to study {\em categorial aspects} more
systematically; we hope to come back to such items in subsequent work.

\subsection{Morphisms}
Projective spaces or general Grassmannians can be turned into categories
in two essentially different ways (cf.\ \cite{Be02, BeKi10}).
The same is true in the present context. 
One may conjecture that
an analog of the ``fundamental theorem of projective geometry'' holds:
{\em every automorphism of $(\cP(\Omega),\Gamma)$ is induced by an automorphism
of $\Omega$} (if $\Omega$ is not too small). 
However, the main difference is that now, in the non-abelian case,
 ``interior maps'' (left-, right-
and middle multiplication operators) are no longer morphisms in either of the
two categories. One may wonder whether they are morphisms in yet another sense.

\subsection{Antiautomorphisms, involutions}
For the case of commutative $\Omega$, see \cite{BeKi10b}.
If $\Omega$ is non-commutative, the situation seems to change drastically:
first of all, 
 in principle, each permutation of the $5$ arguments of $\Gamma$
gives rise to its own notion of ``anti-homomorphism''. 
The simplest case corresponds to anti-homomorphisms on the level of $\Omega$:
they correspond to ``usual'' morphisms between $\Gamma$ and $\check \Gamma$.
For instance, the inversion map of $\Omega$ induces an ``involution'' of this kind.
It corresponds  to the permutation $(\xi\zeta)(\alpha \beta)$, which belongs to $\bV$.
On the other hand, looking at permutations not belonging to $\bV$,
one may ask whether anti-homomorphisms corresponding to the
permutation $(\xi \zeta)$ exist: these would generalize the involutions
considered in \cite{BeKi10b}.
In the commutative case, they are given by {\em orthocomplementation maps}.
In particular, they induce {\em lattice antiautomorphisms}.
As mentioned above, our formulas suggest that some kind of duality in this sense
exists; on the other hand, there seems to be no hope to generalize orthocomplementation
maps in some obvious way. 

\subsection{Subobjects}
A {\em subspace of $\cP(\Omega)$} is a
subset $\cY \subset \cP(\Omega)$  stable under $\Gamma$.
Such sets may be defined by algebraic conditions (cf.\ Theorem \ref{SubgroupTheorem}),
or by topological or differential conditions: e.g.,  if $\Omega$ is a topological or
Lie group (and  in particular, for $\Omega = \bR^{2n}$), we may
 consider spaces $\cY$ of {\em closed} subsets or of {\em smooth} or {\em
algebraic submanifolds}.  One expects that $\Gamma$ will have ``singularities'', so
one possibly has to exclude some ``singular sets'': outside such sets 
we expect $\Gamma$ to be fairly regular
(cf.\ related results in \cite{BeNe05}). 
The analogy with near-rings suggests also to look at subspaces corresponding to
{\em  near-fields}.

\subsection{Ideals, inner ideals, intrinsic subspaces}
As in rings or near-rings, or in Jordan algebraic structures, one may define notions
of certain subobjects playing the r\^ole of various kinds of ``ideals'', and which will
be of  importance for a systematic ``structure theory''.
Such ideals are defined as subobjects $\cY$ defining conditions like
the ``inner ideal condition''
$\Gamma(\cY,\cP,\cY,\cP,\cY) \subset \cY$.
In a Jordan theoretic context, such sets have been characterized in \cite{BeL08} as
``intrinsic subspaces''.

\subsection{Products}
Our construction is compatible with {\em direct products}.
For instance, $\cP(\Omega_1) \times \cP_2(\Omega_2)$ is a subspace 
of $\cP(\Omega_1 \times \Omega_2)$. 
Here, the case $\Omega_1 = \Omega_2 = \Omega$ is particularly important
since elements of $\cP(\Omega\times \Omega)$ are nothing but
{\em relations} on $\Omega$, and hence may play the r\^ole of
{\em endomorphisms}, as discussed above.
This situation is characterized by the existence of 
{\em transversal triples} (Theorem \ref{TripleTheorem}) 
and of projections that are endomorphisms.

\subsection{Axiomatic approach, base points, and equivalence of categories}
Following \cite{BeKi10}, one may consider as {\em base point} a fixed pair
$(o^+,o^-)$ of transversal subgroups, and then look at the ``pair of ternary
near-rings'' 
$(\pi^+,\pi^-)$ defined by Theorem \ref{PairTheorem} as a sort of
``tangent object''. 
In which sense can one say, then, that the theory of such
objects is equivalent to the theory of $\cP(\Omega)$ --
is there an equivalence of categories between ``tangent objects''
and ``geometries with base point''?

\subsection{Symmetric spaces, and Jordan theory}
Unlike the three ``diagonals'' mentioned in Theorem \ref{LatticeTheorem},
the ``diagonal $x=z$'' is not related to lattice theory, but rather to
{\em symmetric spaces}: in any torsor, the ternary composition gives rise to
a binary map $\mu(x,y):=(xyx)$, which (in the case of a Lie group) is precisely
the underlying ``symmetric space structure'' (in the sense of \cite{Lo69}).
Thus, automatically, our torsors $U_{ab}$ give rise to families of symmetric spaces.
Other symmetric spaces can be constructed from these in presence of an {\em involution}.
Therefore, symmetric spaces are a main geometric ingredient for the theory
corresponding to the ``diagonal $x=y$'', which, in turn, would be a kind of 
``non-commutative Jordan theory''.  
Many of the surprising features of Jordan theory are due to the fact that,
because of the symmetry $(\alpha \zeta)(\xi \beta)$ (Theorem 
\ref{SecondSymmetryTheorem}), this theory essentially is the same as the
theory of the ``diagonal $a=b$'', i.e., of the family of torsors
$U_{aa}$: thus there should be some kind of duality between
certain families of symmetric spaces and certain families of torsors.

\subsection{Flag geometries}
For Jordan theory, the {\em projective geometry of a Lie algebra} defined in 
\cite{BeNe04} gives a  useful ``universal geometric model''.
The construction is similar in spirit to the present work;  however, it is not
clear at all how to carry it out on the level of groups.
Essentially, one needs a definition of an analog of our maps $\Gamma$ and
$\Sigma$ for {\em (short) flags}, that is, for pairs of subsets
$(x,x')$ with $o \in x \subset x' \subset \Omega$, instead of single subsets.
Some results by J.\ Chenal (\cite{Chenal}) point in the direction that such
constructions should be possible for general spaces of finite flags. 

\subsection{Reductive groups, finite groups}
For a reductive Lie group $\Omega$, how is the projective geometry of $\Omega$
related to well-known structure theory? Is there some link with the notion of
{\em building}?  Is the projective geometry of the {\em Weyl group} related in some 
definite way to the projective geometry of $\Omega$ ?

\subsection{Supersymmetry}
We have the impression that Section \ref{Sec:Symmetry} on ``symmetry'' is closely
related to the topic of {\em supersymmetry}: 
indeed, the behavior of the ``signed versions'' under permutations reminds the
``sign rule'' of supersymmetry. 
On a very fundamental level, our map $\Gamma$ takes account of the
principle that there is no reason to prefer a group to its opposite group: in
principle, both should play symmetric r\^oles. 
However, in presence of additional structure (two or more operations)
this symmetry may be broken: e.g., there may be ``left-distributivity'', but not
``right-distributivity''. As the tables in Theorem \ref{BigKleinTheorem} show,
a complete book-keeping of such situations is not entirely trivial.
The next level in such a book-keeping should be reached when we are
dealing with $\cP(\Omega \times \Omega)$: namely, subsets of $\Omega \times \Omega$
are just relations on $\Omega$, and here again, there is no reason
to prefer ``usual'' composition to its opposite; in other words, we have to
fix choices and conventions for the structure maps
$\Gamma_{\Omega \times \Omega}$ of the group $\Omega \times \Omega$,
and similarly for iterated products $\Omega^k$.
{\em Supersymmetry} might turn out to be one of the sign-rules that are used for
such book-keeping of iterated products.

\end{document}